\newextarrow{\xbigtoto}{{20}{20}{20}{20}}
   {\bigRelbar\bigRelbar{\bigtwoarrowsleft\rightarrow\rightarrow}}
\newcommand{\calF}{\mathcal{F}}
\newcommand{\an}{\mathrm{an}}
\renewcommand{\inf}{\mathrm{inf}}
\newcommand{\opp}{\mathrm{op}}    % opposite category
\newcommand{\Loc}{\mathrm{Loc}}
\newcommand{\Berk}{\mathrm{Berk}}
\newcommand{\Frm}{\textbf{Frm}}
\newcommand{\argu}{\text{(---)}}
\newcommand{\frap}{\mathfrak{p}}
\newcommand{\qp}{Q_{+}}
\newcommand{\rf}{\mathsf{rad}_\F}
\newcommand{\Alin}{\mathcal{A}_{\mathrm{Lin}}}
\newcommand{\A}{\mathcal{A}}
\newcommand{\M}{\mathcal{M}}
\newcommand{\AffBerk}{\mathbb{A}_{\mathrm{Berk}}}
\newcommand{\Z}{\mathbb{Z}}
\newcommand{\N}{\mathbb{N}}
\newcommand{\Q}{\mathbb{Q}}
\newcommand{\R}{\mathbb{R}}
\newcommand{\F}{\mathcal{F}}
\newcommand{\C}{\mathbb{C}}
\newcommand{\vv}{\mathsf{v}}
\newcommand{\thT}{\mathbb{T}}
\newcommand{\ACVF}{\mathrm{ACVF}}
\newcommand{\Sp}{\mathrm{Sp}}
\newcommand{\lav}{\overleftarrow{av}}
\newcommand{\baseS}{\mathcal{S}}
\newcommand{\cosimp}[3]{\xymatrix@1{#1 \ar@<.4ex>[r] \ar@<-.4ex>[r] & {\ }#2 \ar@<0.8ex>[r] \ar[r] \ar@<-.8ex>[r] & {\ } #3 \ar@<1.2ex>[r] \ar@<.4ex>[r] \ar@<-.4ex>[r] \ar@<-1.2ex>[r] & \cdots }}
\newsavebox{\pullback}
\sbox\pullback{%
	\begin{tikzpicture}%
	\draw (0,0) -- (1ex,0ex);%
	\draw (1ex,0ex) -- (1ex,1ex);%
	\end{tikzpicture}}
\definecolor{quotemark}{gray}{0.7}
\newlength\origparskip
\newcommand{\fquote}{%
	\@ifnextchar[{\fquote@i}{\fquote@i[]}%]
}
\def\fquote@i[#1]{%
	\@ifnextchar[{\fquote@ii{#1}}{\fquote@ii{#1}[]}%]
}%
\def\fquote@ii#1[#2]{%
	\def\pqm@tempa{#1}%
	\def\pqm@tempb{#2}%
	\noindent
	\list
	{}
	{\setlength{\leftmargin}{0.3\textwidth}%
		\setlength{\rightmargin}{0.1\textwidth}%
		\setlength{\origparskip}{\parskip}}%
	\item[]%
	\begin{picture}(0,0)%
	\put(-15,-8){\makebox(0,0){\scalebox{4}{%
				\textcolor{quotemark}{\textquotedblright}}}}%
	\end{picture}%
	\begingroup
	\itshape
	\ignorespaces}%
\def\endfquote{%
	\endgroup
	\par
	\raggedleft
	\ifx\pqm@tempa\empty
	\else
	{\bfseries --- \pqm@tempa\par}%
	\setlength{\parskip}{\origparskip}%
	\ifx\pqm@tempb\empty
	\else
	(\pqm@tempb)%
	\fi
	\fi
	\par
	\endlist}
\newcommand\norm[1]{\left\lVert#1\right\rVert}
\newcommand{\equalizer}[2]{\xymatrix@1{#1 \ar@<.4ex>[r] \ar@<-0.4ex>[r] & {\ } #2}}
\newcommand{\adjunction}[4]{\xymatrix@1{#1{\ } \ar@<-0.3ex>[r]_{ {\scriptstyle #2}} & {\ } #3 \ar@<-0.3ex>[l]_{ {\scriptstyle #4}}}}
\begin{document}
\bibliographystyle{alpha}
\newtheorem{theorem}{Theorem}[section]
\newtheorem*{theorem*}{Theorem}
\newtheorem*{condition*}{Condition}
\newtheorem*{definition*}{Definition}
\newtheorem*{corollary*}{Corollary}
\newtheorem{proposition}[theorem]{Proposition}
\newtheorem{lemma}[theorem]{Lemma}
\newtheorem{corollary}[theorem]{Corollary}
\newtheorem{claim}[theorem]{Claim}
\newtheorem{conclusion}[theorem]{Conclusion}
\newtheorem{hypothesis}[theorem]{Hypothesis}
\newtheorem{summarytheorem}[theorem]{Summary Theorem}

\theoremstyle{definition}
\newtheorem{definition}[theorem]{Definition}
\newtheorem{question}{Question}
\newtheorem{remark}[theorem]{Remark}
\newtheorem{observation}[theorem]{Observation}
\newtheorem{discussion}[theorem]{Discussion}
\newtheorem{guess}[theorem]{Guess}
\newtheorem{example}[theorem]{Example}
\newtheorem{condition}[theorem]{Condition}
\newtheorem{warning}[theorem]{Warning}
\newtheorem{notation}[theorem]{Notation}
\newtheorem{construction}[theorem]{Construction}
\newtheorem{problem}[theorem]{Problem}
\newtheorem{fact}[theorem]{Fact}
\newtheorem{thesis}[theorem]{Thesis}
\newtheorem{convention}[theorem]{Convention}
\newtheorem{summary}[theorem]{Summary}
\newtheorem{reminder}[theorem]{Reminder}

\newtheorem{naivedefinition}[theorem]{Naive Definition}
\newtheorem*{theorem:BerkovichDISC}{Theorem A}

\title{Logical Berkovich Geometry: A Point-free Perspective}
\author{Ming Ng}
    \thanks{\emph{Thanks:} Research partially supported by EPSRC Grant EP/V028812/1.} 
    
\begin{abstract} 
	Extending our insights from \cite{NVOstrowski}, we apply point-free techniques to sharpen a foundational result in Berkovich geometry. In our language, given the ring $\A:=K\{R^{-1}T\}$ of convergent power series over a suitable non-Archimedean field $K$, the points of its Berkovich Spectrum $\M(\A)$ correspond to $R$-good filters. The surprise is that, unlike the original result by Berkovich, we do not require the field $K$ to be non-trivially valued. Our investigations into non-Archimedean geometry can be understood as being framed by the question: what is the relationship between topology and logic? 

\end{abstract}

\maketitle

\begin{fquote}[E. Hrushovski and F. Loeser \cite{HruLoe}]
	Model theory rarely deals directly with topology; the great exception is the theory of o-minimal structures, where the topology arises naturally from an ordered structure.
\end{fquote}

\begin{fquote}[S. Vickers \cite{Vi3}]
	While geometric logic can be treated as just another logic, it is an unusual one. [...] To put it another way, the geometric mathematics has an intrinsic continuity.
\end{fquote}

It is well-known that any complex algebraic variety\footnote{More precisely, a scheme of (locally) finite type over $\C$.} $X$ can be canonically associated to a complex analytic space $X^{\an}$ via a (functorial) construction known as \emph{complex analytification}. This opens up the study of complex algebraic varieties to powerful tools in complex analysis and differential geometry, prompting the natural question: can we play the same game for algebraic varieties over fields which are not $\C$? For instance, over non-Archimedean fields (e.g. the complex $p$-adics $\C_p$)?

The general thrust of these questions is challenging, but over-simplistic. It is over-simplistic because the naive analytification of algebraic varieties over non-Archimedean fields loses significant information about the original variety, limiting its intended usefulness. Still, it is challenging because it brings into focus the main issue behind this lossy-ness: unlike the complex numbers $\C$, a non-Archimedean field $K$ is totally disconnected. Once understood and made precise, this tells us where to start looking for a robust non-Archimedean analogue of complex analytification.

The key premise of Berkovich geometry \cite{BerkovichMonograph} is that the naive analytification of non-Archimedean varieties is disconnected because we do not have enough points. The solution then, by way of a construction known as \emph{Berkovich analytification}, is to fill in those missing points before developing techniques to study these new analytic spaces.\footnote{There are also other solutions to this disconnectedness problem, e.g. Tate's rigid analytic geometry, which involves defining an appropriate Grothendieck topology that finitises the usual notion of a topology. See e.g. \cite[\S 1.5]{Payne}.} What is interesting to us, however, is how further study of these Berkovich spaces often involve a re-characterisation of the original construction. Consider, for instance, the following characterisations of the Berkovich Affine Line:

\begin{summarytheorem} %\emph{(Equivalent Characterisations of $\AffBerk^1$)} 
	\label{sumthm:BERKOVICH}
	As our setup,
	\begin{itemize}
		\item Fix $K$ to be an algebraically closed field complete with respect to a non-trivial non-Archimedean norm $|\cdot|$;
		\item Denote $\Gamma$ to be the value group of $K$, which we shall assume to be contained in $\R$;
		\item Denote $\AffBerk^1$ to be the Berkovich affine line.
	\end{itemize}	
	\noindent \underline{Then}, $\AffBerk^1$ can be equivalently characterised as:

	\begin{enumerate}[label=(\roman*)]
		\item The set of multiplicative seminorms on $K[T]$ extending $|\cdot|$ on $K$, equipped with the Berkovich topology;
		\item A space whose points are defined by a sequence of nested closed discs $D_{r_1}(k_1)\supseteq D_{r_2}(k_2)\supseteq\dots $ contained in $K$;
		\item The space of types over $K$, concentrating on $\mathbb{A}^1_K$, that are ``almost orthogonal to $\Gamma$'';\footnote{This is a technical definition which the non-logician may wish to treat as a black box --- it will not be needed to understand the main results of this thesis. For the model theorist: $K$ here is taken to be a model of the theory $\ACVF$, which is a 3-sorted theory comprising $\mathrm{VF}$ as the value field sort, $\Gamma$ as the value group sort, and $\kappa$ as the residue field sort. Let $\mathbb{U}$ be the monster model. If $C\subset \mathbb{U}$ and $p$ is a $C$-type, then we say $p$ is \emph{almost orthogonal to $\Gamma$} if for any realisation $a$ of $p$ we have that $\Gamma(C(a))=\Gamma(C)$.}
		\item A profinite $\R$-tree.
	\end{enumerate}
\end{summarytheorem}
\begin{proof}(i) is the definition of $\AffBerk^1$. 
	(ii) can be proved similarly to \cite[Example 1.4.4]{BerkovichMonograph}. (iii) is a special case of what was proved in \cite[pp. 187-188]{HruLoe}. (iv) essentially follows from \cite[Theorem 2.20]{BakerRumeley}. %and the fact that $\mathbb{P}^1_{\Berk}$ is the 1-point compactification of $\AffBerk^1$ at the level of topological spaces.
	%,  which shows that the Berkovich projective line $\mathbb{P}^1_{\Berk}$ is a profinite $\R$-tree, and the fact that $\mathbb{P}^1_{\Berk}=\AffBerk^1\cup\{\infty\}$ %, and the fact that $\mathbb{P}^1_{\Berk}$ is the 1-point compactification of $\AffBerk^1$.
\end{proof}

For the algebraic geometer, the different characterisations of $\AffBerk^1$ in Summary Theorem~\ref{sumthm:BERKOVICH}
reflect the variety of methods that have been used when studying Berkovich spaces. In more detail:

%to Berkovich spaces to obtain important results.
%reflect the variety of tools that have been employed to obtain key results about Berkovich spaces.

\begin{itemize}
	\item The equivalence of items (i) and (ii), a foundational result in Berkovich geometry, sets up the classification of points of $\AffBerk^1$. %, allowing us to, e.g. measure the (spherical) completeness of the base field $K$;
	
	\item The language of ``almost orthogonal types'' reflects the model-theoretic methods pioneered by Hrushovski and Loeser \cite{HruLoe}. This perspective was particularly useful for establishing the topological ``tameness'' of Berkovich spaces under very mild hypotheses (see e.g. Theorem~\ref{thm:BerkLOCALCONTRACT}).
	\begin{comment}
	
	\item The language of ``almost orthogonal types'' reflects the model-theoretic methods pioneered by Hrushovski and Loeser \cite{HruLoe}. This perspective was particularly useful for establishing the topological ``tameness'' of Berkovich spaces under very mild hypotheses. A beautiful example is their result that for any quasi-projective $K$-scheme $V$ and any semi-algebraic subset $U$ of its analytification $V^\an$, there exists a deformation retract of $U$ onto a finite simplicial complex $\Delta\subset U$.\footnote{Previously, it was only known that the generic fibre of a polystable formal $K$-scheme admits a strong deformation retraction to a closed subset homeomorphic to a finite simplicial complex, a result due to Berkovich \cite{BerkovichPolystable}. }

	\begin{itemize}
	\item Loeser: Hrushovski-Loeser results work when trivially valued cos they embed in trivially valued fields...
	\item See Payne's article, check whether I explained this correctly;
	\item Is my comparison between generic fibre and semi-algebraic set justified?
	\end{itemize}
	\end{comment}
	
	%%% See Baker-Rumely Section 1.4, pro-finite R-tree is defined by having a unique arc joining any two points, and this arc has to be geodesic; this emerges from the least upper bound semi-lattice structure. 
	\item Viewing $\AffBerk^1$ as a profinite $\R$-tree emphasises its semilattice structure: given any $x,y\in\AffBerk^1$ there exists a unique least upper bound $x\vee y\in \AffBerk^1$ [with respect to the partial order that $x\leq y$ iff $|f|_x\leq |f|_y$ for $f\in K[T]$]. A key insight of Baker and Rumely \cite{BakerRumeley} was that this structure on $\AffBerk^1$ (in fact, on $\mathbb{P}^1_{\Berk}$) can be used to define a Laplacian operator, laying the foundations for a non-Archimedean analogue of complex potential theory. Their work later found surprising applications in the analysis of preperiodic points of complex dynamical systems \cite{BakerDeMarco}. % \footnote{The fact that non-Archimedean analytic techniques, developed in analogy with the complex case, should find applications in the non-Archimedean setting is reasonable; what is perhaps less expected is that these non-Archimedean techniques should also find applications in the complex setting. \cite{BakerDeMarco} gives an example of this in complex dynamics, but non-Archimedean methods have also been useful when studying complex algebraic varieties. For details, we recommend \cite[\S 5]{Payne}.} 
	%%% See xvi-xvii in Baker-Rumely for discussion on this, and also, Section 1.6.
	%%% Say something about other approaches to NA potential theory.
	
	\begin{comment} They proved that the set of parameters $c\in\C$ such that both $a$ and $b$ are preperiodic for $z^d+c$ is infinite iff $a^d=b^d$.
	\end{comment}
\end{itemize}
\noindent For the topos theorist, however, Summary Theorem~\ref{sumthm:BERKOVICH} is suggestive because it mirrors the different representations of a point-free space: as a universe of (algebraic) models axiomatised by a first-order theory, as a certain space of prime filters, or as a distributive lattice. One may therefore wonder if the listed characterisations of $\AffBerk^1$ reflect a constellation of perspectives on Berkovich spaces that move together in a tightly-connected way. 

It is this intuition that will guide us to the main result of this paper, Theorem~\ref{thm:BerkovichDISC}, where point-free techniques are used to reformulate the equivalence of items (i) and (ii) in Summary Theorem~\ref{sumthm:BERKOVICH}.\footnote{Technically, Theorem~\ref{thm:BerkovichDISC} works with multiplicative seminorms on the ring of convergent power series $K\{R^{-1}T\}$ and not those on $K[T]$, but in fact the result extends to the latter setting by Remark~\ref{rem:AffineLineNOTBerkSpectrum}.} The main surprise is that, unlike Berkovich's original result, the point-free approach works equally well for both the trivially and non-trivially valued fields. This indicates that the \emph{algebraic} hypothesis of $K$ being non-trivially valued is in fact a \emph{point-set} hypothesis, and is not essential to the underlying mathematics. Our result thus gives an interesting proof of concept regarding the clarifying potential of the point-free perspective within non-Archimedean geometry.

\subsection*{How to read this paper} For the reader primarily interested in non-Archimedean geometry \& its topology: no category theory or logic will be needed to understand the proof of the main result. As such, this reader may wish to examine the definition of an upper real (Definition~\ref{def:upperreal}), review  Example~\ref{ex:BerkPOWERseriesRING} to recall why the radii of rigid discs fail to be well-defined when the base field is trivially-valued, before proceeding to Section~\ref{sec:BerkClassThm}. Looming in the background, however, are a series of deep interactions between topology \& logic, which we discuss more fully in Section~\ref{sec:ptfree}. The curious non-logician may wish to make note that filters play a role in these interactions, and treat the rest as a black box. The model theorist, however, may be interested to learn that topos theorists work with an infinitary fragment of positive logic known as \emph{geometric logic}, and what shows up as \textit{types} in the context of model theory sometimes shows up as honest \textit{models} of a geometric theory (e.g. Dedekind reals). Clarifying this connection in the setting of non-Archimedean geometry motivates a very interesting series of test problems, which we discuss in Section~\ref{sec:algforkinroad}.

Finally, for those interested in topos theory: our result in Berkovich geometry can be regarded as an advertisement for point-free topology, in particular, how point-free techniques may resolve a problem by eliminating some of the set-theoretic noise. For those interested in constructive mathematics, we remark that the full strength of Theorem~\ref{thm:BerkovichDISC} relies on LEM. A weaker version of our result holds (Theorem~\ref{thm:Alin=rgoodFILTER}) if one wishes to avoid classical assumptions, but there's some fine print -- see Summary~\ref{sum:classicalVSgeometric}.

\subsection*{Acknowledgements} This paper benefitted from various mathematical exchanges with Matt Baker, Rob Benedetto, Vladimir Berkovich, Artem Chernikov, Eric Finster, Udi Hrushovski, Fran\c{c}ois Loeser, Michael Temkin, Steve Vickers, and Vincent (Jinhe) Ye. Expository aspects of this work also benefitted from presentation at the ``Type Theory, Constructive Mathematics and Geometric Logic'' Conference at CIRM, as well as at the Logic Working Group hosted by the University of Torino -- thank you all.

\begin{comment}
\begin{itemize}
\item Slightly classical
\item List results
\end{itemize}

\footnote{Remark: These different notions of spaces implicit in the Summary Theorem also give rise to different notions of completions. Item (ii) ``completes'' the disconnected space $K$ via sequences of nested closed discs; item (iii) considers the types on $\mathbb{A}^1_K$ which can read as considering the limit points of the underlying model (for details on how types of a model can be viewed as limit points, see \cite{MalliarisICM}); finally, by construction, item (iv) regards $\AffBerk^1$ as an inverse limit of finite $\R$-trees.} 
%%% Incompleteness: non-definable types, Type IV points.
% 
\end{comment}
% \tableofcontents

\section{Preliminaries}\label{sec:prelim}

\subsection{The Point-free Perspective}\label{sec:ptfree} As a first approximation, point-free topology can be understood as the study of localic spaces.

\begin{definition}\label{def:frame} \hfill
	\begin{enumerate}[label=(\roman*)]
		\item 	A \emph{frame} is a complete lattice $A$ possessing all small joins $\bigvee$ and all finite meets $\land$, such that the following distributivity law holds
		\[a\land \bigvee S = \bigvee \{a\land b \,|\, b\in S\}\]
		where $a\in A, S\subseteq A$.
		\item 	A \emph{frame homomorphism} is a function between frames that preserves arbitrary joins and finite meets.
	\end{enumerate}	
	Frames and frame homomorphisms form the category $\Frm$. We define the category $\Loc:=\Frm^{\opp}$. We refer to the objects in $\Loc$ as \emph{localic spaces}.\footnote{Elsewhere, this is often referred to as a \emph{locale}; our terminology was chosen for suggestiveness.} %%% Notationally, distinguish $\Omega$.
\end{definition}

The main difference between the localic perspective and point-set topology is one of priority: what are the basic units for defining a space? In the case of a point-set space, one starts with a set of elements, before defining the topology on it in the usual way. On the other hand, the localic perspective treats the opens as the basic units for defining a space. Once the topological structure has been defined, we can then recover its points:

\begin{definition}[Points]\label{def:points} Let $\Omega X$ be a frame. Denote $\Omega:=\Omega\textbf{1}$ to be the powerset of the singleton.
	\begin{enumerate}[label=(\roman*)]
		\item A \emph{global point} is a frame homomorphism $\Omega X \to \Omega$.
		\item A \emph{generalised point} is a frame homomorphism $\Omega X\to\Omega Y$, where $\Omega Y$ is any frame.
	\end{enumerate}
\end{definition}

Definition~\ref{def:frame} states that localic spaces and frames are the same thing, but conceptually we shall prefer to view \emph{frames} as corresponding to the opens of a space and the localic space as corresponding to the universe of all its (generalised) points. Hereafter, whenever we have a localic space $X$, we shall denote its corresponding frame of opens as $\Omega X$. 

\begin{remark}\label{rem:filtersPTS} Classically, $\Omega$ of course corresponds to the two element Boolean algebra $\{0,1\}$. As such, $x\colon \Omega X \to \Omega$ can be regarded as a way of sorting out which opens the point $x$ belongs to and which ones it doesn't. However, the assertion $\Omega\textbf{1}\cong \{0,1\}$ relies on the Law of Excluded Middle (LEM); in particular, if we opt to work constructively, then the isomorphism no longer holds.
% $\Omega$ is often regarded as the frame of truth values. %% , telling us that something is either true or false
\end{remark}

% More details can be found in, e.g. \cite{NgThesis} or \cite{VickersPtfreePtwise}.

\subsubsection{Geometric Logic}\label{sec:geomlogic} The view from topos theory emphasises the logical nature of this point-free perspective. We start with the notion of a geometric theory:

\begin{definition}[Geometric Theories] Let $\Sigma$ be a (many-sorted) first-order signature (or vocabulary).
	\begin{itemize}
	\item Let $\vec{x}$ be a finite vector of variables, each with a given sort. A \emph{geometric formula} in context $\vec{x}$ is a formula built up using symbols from $\Sigma$ via the following logical connectives: $=$, $\top$ (true), $\land$ (\textbf{finite} conjunction), $\bigvee$ (\textbf{arbitrary} disjunction), $\exists$.
 \item  A \emph{geometric theory over $\Sigma$} is a set of axioms of the form
		\[\forall\vec{x}. (\phi\to \psi),\]
		where $\phi$ and $\psi$ are geometric formulae.
	\end{itemize}    
\end{definition}

\begin{remark}\label{rem:geomsyntax} The geometric syntax already reveals key differences with classical first-order logic, namely:
	\begin{itemize}
		\item The absence of negation $\lnot$
		\item Allowing arbitrary (possibly infinite) disjunction
		\item Disallowing nested implications and universal quantification in the axioms.
	\end{itemize}
	This not only affects what we are able to express in this logical language, but also how we understand ``truth''. In classical logic, one may appeal to the LEM and assert that $p\vee \lnot p$ for any proposition $p$. In geometric logic, we are unable to even express this statement due to the lack of negation. 
\end{remark}

\begin{convention} Hereafter, the unqualified term ``theory'' shall always mean a geometric theory. If we wish to refer to theories from classical first-order logic, we shall signpost this explicitly.
\end{convention}

%Examples of geometric theories include 

Of particular interest to us is a special class of geometric theories known as \emph{essentially propositional theories}. Before explaining the qualifier \emph{essentially}, we start with the basic definition:
 %We give an informal definition (full details can be found in, e.g. \cite[\S2]{NgThesis} or \cite{VickersPtfreePtwise})

\begin{definition}\label{def:prop} A (geometric) theory $\thT$ is called a \textit{propositional theory} if its signature $\Sigma$ has no sorts [so there can be no variables or terms, nor existential quantification].  In particular, its axioms are constructed only from constant symbols in $\Sigma$, $\top$ (true), finite $\land$ and arbitrary $\bigvee$.  
\end{definition}

The connection between geometric logic and locales now becomes apparent: the propositional formulae of $\thT$ correspond to the opens, the finite $\land$ to the finite intersections of opens and the arbitrary $\bigvee$ to their arbitrary unions. Further, given any frame $A$, one can define a theory $\thT_A$ such that its models correspond to the points of the corresponding localic space.

\begin{definition}\label{def:frameProp} Let $A$ be a frame. Define $\thT_A$ as follows: 
	\begin{itemize}
		\item[] \textbf{Signature} $\Sigma$: A propositional symbol $P_a$ for each $a\in  A$
		\item[] \textbf{Axioms}:
\begin{enumerate}
		\item $P_a\to P_b$, for $a\leq b$ in $A$
	\item $P_a\land P_b\to P_{a\land b}$, for $a,b\in A$
	\item $\top\to P_1$
	\item $P_{\bigvee S}\to \bigvee_{a\in S}P_a$, for $S\subseteq A$
\end{enumerate}
	\end{itemize}
\end{definition}
These axioms ensure that meets and joins of the frame correspond to conjunctions and disjunctions in the logic, in particular that frame $A$ corresponds to the Lindenbaum Algebra of $\thT_A$. Given two propositional theories $\thT$ and $\thT'$, define a \emph{$\thT$-model in $\thT'$} to be a Lindenbaum algebra homomorphism between their corresponding Lindenbaum algebras $\Omega_\thT\to \Omega_{\thT'}$. A straightforward check then verifies that a $\thT_A$-model in $\thT_B$ corresponds to frame homomorphism $A\to B$, for any frames $A,B$ (for details, see \cite[\S 2]{Vi4}).

Developing Remark~\ref{rem:filtersPTS}, we can regard the models of $\thT_A$ as \emph{completely prime filters}. Recall: a \emph{filter} is a collection of subsets satisfying certain formal properties also satisfied by the collection of open neighbourhoods of any point $x\in X$ in a point-set topological space. The hypothesis of \emph{completely prime} enforces a kind of coherence within the filter with respect to $\bigvee$. More precisely:

\begin{definition}[Filters]\label{def:filters} Let $S$ be an infinite set. %%% Does it have to be infinite?
	\begin{enumerate}[label=(\roman*)]
		\item A \emph{filter on $S$} is a collection of subsets $\F\subseteq \mathcal{P}(S)$ such that:
		\begin{enumerate}[label=(\alph*)]
			\item $A\subseteq B\subseteq S$ and $A\in\F$ implies $B\in\F$;
			\item $A,B\in\F$ implies $A\cap B\in \F$; and
			\item $S\in \F$
		\end{enumerate}
		\item A filter $\F$ is \emph{prime} if for every finite set index set $I$:
		\begin{itemize}
			\item[] $\bigcup_{i}A_i\in\F$ implies there exists some $j\in I$ such that $A_j\in\F$.
		\end{itemize}
		A filter $\F$ is \emph{completely prime} if the same holds true for \emph{any} index set $I$ (including when $I$ is infinite).
		\item A filter $\F$ is called an \emph{ultrafilter} if it has an opinion on all subsets of $S$:
		\begin{itemize}
			\item[] If $A\subset S$, then either $A$ or its complement $S\setminus A$ belongs to $\F$ (but not both).
		\end{itemize}
	\end{enumerate}
\end{definition}

The translation of Definition~\ref{def:filters} to the setting of locales is straightforward (although there's a subtlety when it comes to ultrafilters\footnote{Why? Clearly the logical counterpart to taking the complement of a subset should be taking negation, but negation is not part of the geometric syntax. Recalling that the Lindenbaum Algebra of a propositional theory corresponds to a frame, we can also understand this topologically: the opens of a topological space are generally not clopen. See also Discussion~\ref{dis:typesMODELSfilters}.
}). Since every frame $A$ gives rise to a propositional theory $\thT_A$, and the Lindenbaum Algebra of every propositional theory $\thT$ gives rise to a frame $\Omega_\thT$, this justifies the view that the universe of all $\thT$-models for any propositional $\thT$ can be regarded as a localic space, and that these $\thT$-models correspond to completely prime filters.\footnote{More precisely, they correspond to the \emph{standard models} of $\thT_A$ (equivalently, the global points of $A$) that correspond to completely prime filters. A standard model of $\thT_A$ tells us which propositional symbols $P_a$ are assigned truth value $\top$, and hence describes a subset $F\subseteq A$ satisfying the axioms of $\thT_A$. Re-examining $\thT_A$ in light of Definition~\ref{def:filters}, the first three axioms tell us that $F$ is a filter, the fourth tells us that $F$ is completely prime.}

How does this picture extend to predicate theories? This is where the topos theory comes in. Here's the basic gist. Given any (geometric) theory $\thT$, one uses categorical logic to define what a $\thT$-model means in this setting. The topos theory then says: the universe of all $\thT$-models can be regarded as a kind of generalised space -- let us say a \emph{point-free space} -- whose points correspond to the $\thT$-models. We shall treat this framework as a black box in this paper (for details on the underlying topos theory, see e.g. \cite[Ch. 2]{NgThesis}, or \cite{Vi4, VickersPtfreePtwise}). Let us instead record two guiding remarks which the reader may find illuminating. First, compare this point-free perspective with the model-theoretic perspective. For the model theorist, the theory of groups $\thT_{\mathrm{grp}}$ gives rise to a class of structures satisfying its axioms: the elementary class of groups.  By contrast, in topos theory, the theory of groups gives rise to a (pseudo-)functor of models, analogous to the ``functor of points'' approach in algebraic geometry (see \cite[B4.2]{J1}). It is in this sense that the universe of all $\thT_{\mathrm{grp}}$-models may be regarded as a generalised space.

 Second, topos theory also tells us when these generalised spaces of models are equivalent.\footnote{More precisely, two theories $\thT,\thT'$ define equivalent spaces of models iff their classifying toposes $\baseS[\thT]\simeq \baseS[\thT']$ are equivalent. For details, see \cite[Prop. 2.1.28]{NgThesis}.} We can therefore define a more general class of theories, including the original propositional theories, whose space of models are equivalent to those of a propositional theory. Call such theories \emph{essentially propositional}, and let us also refer to their corresponding space of models as \emph{localic spaces}.\footnote{Why consider essentially propositional theories? In practice, the presence of sorts in a theory's signature often allows for a much nicer axiomatisation of the models, whereas the absence of sorts indicates that the theory is less logically complex and so potentially easier to work with. By working with essentially propositional theories (as opposed to just propositional theories), we enjoy the best of both worlds.}
 An important class of essentially propositional theories are those whose sorts are all free algebra constructions --- e.g. the natural numbers $\N$, the integers $\Z$, the rationals $\Q$. A key example would be the localic reals, which we turn to next.

\subsubsection{Localic Reals} This paper uses two different types of reals: the usual Dedekind reals, and the upper reals. Both reals are built up from the rationals but in different ways; this results in different topologies and therefore different subtleties in their analysis. 

\begin{definition}\label{def:ThDed} Define the propositional theory of Dedekind reals $\thT_\R$ as follows:
\begin{itemize}
	\item[] \textbf{Signature} $\Sigma$: Propositional symbols $P_{q,r}$, where $q,r\in\Q$
	\item[] \textbf{Axioms}:
	\begin{enumerate}
		\item $P_{q,r}\land P_{q',r'}\leftrightarrow \bigvee \{P_{s,t} \,|\, \max(q,q')<s<t<\min(r,r')\}$
		\item $\top\rightarrow \bigvee \{P_{q-\epsilon,q+\epsilon}\,|\, q\in \Q\}$
 if $0<\epsilon\in\Q$
\end{enumerate} 
\end{itemize}
\end{definition}

\begin{discussion}[The points of $\thT_\R$]\label{dis:R} Notice again the difference with point-set topology. In point-set topology, we start with the underlying set of reals before equipping it with the usual topology; in the point-free setting, we start with a collection of symbols $\{P_{qr}\}_{q,r\in\Q}$, before applying axioms to ensure that they formally behave like the rational-ended open intervals in $\R$. The usual Dedekinds can then be obtained as completely prime filters of the Lindenbaum Algebra of $\thT_\R$. This illustrates what was mentioned before: the basic units for defining the space $\R$ are the opens, no longer its underlying set of points. 
\end{discussion}

\begin{definition}[Upper \& Lower Reals]\label{def:upperreal} Denote $x$ to be a subset of the rationals. For suggestiveness, write $x<q$ to mean $q\in x$. An \emph{upper real} is a subset $x\subseteq \Q$ that satisfies:
	\begin{enumerate}
		\item \emph{Upward-closure.} If $x<q$ and $q<q'$, then $x<q'$.
		\item \emph{Roundedness.} If $x<q$, then there exists $q'<q$ such that $x<q'$.
	\end{enumerate}
	for all $q,q'\in \Q$.	Analogously, a \emph{lower real} is a subset $x'\subseteq \Q$ that is downward-closed and rounded.
\end{definition}

In fact, we can extend Definition~\ref{def:upperreal} to give a more explicit description of the Dedekinds. %%% Actually, maybe this should go after defining upper & lower; Dedekind reals are $(L,U)$ s.t. rounded and downward/upward closed. Plus separated + located.

\begin{definition}\label{def:Ded2} A Dedekind real is a pair $(L,R)$ of subsets of $\Q$ whereby
\begin{itemize}
	\item $R$ is an \emph{inhabited}\footnote{That is, there exists $r\in\Q$ such that $R<r$.} upper real, and $L$ is an \emph{inhabited} lower real.
	\item $(L,R)$ are \emph{separated}: $(L,R)$ are disjoint subsets of $\Q$.
	\item $(L,R)$ are \emph{located}: for any rationals $q,r$, either $q<L$ or $R<r$.
\end{itemize}	
\end{definition}

\begin{remark}\label{rem:essPROP} We have suppressed the syntactic details, but it is clear that Definitions~\ref{def:upperreal} and \ref{def:Ded2} define predicate theories featuring $\Q$ as a sort in their signatures. Nonetheless, since $\Q$ is a free algebra construction, the theories are actually essentially propositional. One can also deduce this explicitly for Definition~\ref{def:Ded2} by checking that its space of models is equivalent to those of $\thT_\R$ -- see \cite[\S 4.7]{Vi4}.
\end{remark}

\begin{convention} We denote the space of Dedekinds as $\R$. By Remark~\ref{rem:essPROP}, the points of $\R$ may be regarded as models of $\thT_\R$ or the theory described by Definition~\ref{def:Ded2}. For suggestiveness, whenever we want to work with a generic Dedekind $x$, we write $x\in \R$.  
\end{convention}

We conclude with some basic facts about the upper reals. Notice Definition~\ref{def:upperreal} allows the upper real $x=\emptyset$ or $x=\Q$ (corresponding to $\infty$ and $-\infty$ respectively). Excluding these cases, an upper real can be thought of as approximating a number from above whereas a Dedekind real approximates the number from both above and below. In particular: 

\begin{fact}\label{fact:UpperDed} Denote $\overleftarrow{[\infty,\infty)}$ to be the space of upper reals excluding $\infty$.\footnote{In other words, the space of inhabited upper reals -- see the first bullet point of Definition~\ref{def:Ded2}.}
\begin{enumerate}[label=(\roman*)]
	\item  There exists a natural map
	\begin{align*}
	R\colon \mathbb{R} &\longrightarrow \overleftarrow{[-\infty,\infty)}
	\end{align*}
	where given a Dedekind real $x=(L_x,R_x)$, $R$ sends $x\mapsto R_x$. One can define an analogous map from the Dedekinds to the lower reals by sending $x\mapsto L_x$. We remark that the maps $x\mapsto R_x$ and $x\mapsto L_x$ are monic, but they do not define embeddings: $\R$ is most certainly not a subspace $\overleftarrow{[-\infty,\infty)}$ due to the difference in topology. For details, see \cite[Corollary 1.27]{NV}.
	\item Given an indexed set of reals $\{x_i\}_{i\in I}$, we define 
	\[\underset{i\in I}{\inf}\,x_i := \bigcup_{i\in I} \{ q\in \Q \,| \, x_i<q  \}\]
	\[\sup_{i\in I}x_i := \bigcup_{i\in I} \{ q\in \Q \,| \, q<x_i  \}\]
	Classically, bounded infima/suprema of Dedekinds are regarded as defining Dedekinds; constructively, however, this is no longer the case. Examining the above definitions, notice that a set-indexed $\inf$ of reals (Dedekind or upper) defines an upper real whereas a set-indexed $\sup$ of reals (Dedekind or lower) defines a lower real. For details on why bounded upper/lower reals cannot be (constructively) considered as Dedekinds, see Discussion~\ref{dis:classASSBerk}.
\end{enumerate}

\end{fact}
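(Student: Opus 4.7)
The plan is to treat this as two essentially bookkeeping statements, since both parts follow by unwinding the definitions of Dedekind and upper reals given in Definitions~\ref{def:upperreal} and \ref{def:Ded2}. For part (i), I would first verify that the assignment $x=(L_x,R_x)\mapsto R_x$ really does land in $\overleftarrow{[-\infty,\infty)}$: upward-closure and roundedness of $R_x$ are exactly two of the requirements built into the definition of a Dedekind cut, and inhabitedness of $R_x$ is one of the three extra conditions in Definition~\ref{def:Ded2}, so inhabited upper-realness of $R_x$ is immediate.

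For monicity of $x\mapsto R_x$, the key move is to show that $L_x$ is entirely determined by $R_x$ via the equivalence
\[
q\in L_x \;\iff\; \exists q'\in\Q,\ q<q'\ \text{and}\ q'\notin R_x.
\]
The forward direction uses roundedness of $L_x$ (to produce $q'>q$ in $L_x$) together with separatedness ($L_x\cap R_x=\emptyset$); the reverse direction uses locatedness applied to the pair $q<q'$, combined with the hypothesis $q'\notin R_x$. An analogous equivalence handles $x\mapsto L_x$. The fact that these monic maps are \emph{not} embeddings is a topological comparison: the frame of opens on $\R$ (Definition~\ref{def:ThDed}) encodes both an upper and a lower constraint $q<(\cdot)<r$, whereas the Scott-style topology on $\overleftarrow{[-\infty,\infty)}$ only sees opens of the form $\{x : x<q\}$. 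Thus the subspace topology pulled back along $R$ is strictly coarser than the Dedekind topology, and for the detailed verification I would simply point to \cite[Corollary 1.27]{NV} as the author does.

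For part (ii), I would check directly from Definition~\ref{def:upperreal} that $\inf_i x_i:=\bigcup_i\{q : x_i<q\}$ is upward-closed (if $q\in\inf_i x_i$ is witnessed by some $x_j<q$, then upward-closure of $x_j$ gives $x_j<q'$ for any $q'>q$) and rounded (roundedness of $x_j$ produces $q'<q$ with $x_j<q'$). The $\sup$ case is dual. The remark about the constructive failure of bounded $\inf$/$\sup$ to define Dedekinds is deferred, as in the text, to Discussion~\ref{dis:classASSBerk}.

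Honestly, there is no serious obstacle here: both parts are ``type-checking'' statements verifying that the stated assignments produce the claimed kinds of objects, plus one non-trivial but short equivalence supplying monicity. The only place where care is needed is the non-embedding assertion, where one must resist the classical instinct that a monic continuous map between reasonable spaces is automatically an embedding; the resolution lies in the asymmetry between the two-sided interval topology on $\R$ and the one-sided Scott topology on the upper reals, which is precisely the distinction that will matter later when we pass to the trivially-valued case in Section~\ref{sec:BerkClassThm}.
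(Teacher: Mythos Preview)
Your proposal is correct. Note, however, that the paper does not supply any proof here at all: the statement is labelled a \emph{Fact}, with the monicity/non-embedding claim in (i) deferred to \cite[Corollary~1.27]{NV} and the constructive failure in (ii) deferred to Discussion~\ref{dis:classASSBerk}. So you have actually filled in what the paper leaves to references, and your argument --- in particular the recovery of $L_x$ from $R_x$ via roundedness, separatedness, and locatedness --- is the natural one.
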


We enforce the following conventions when describing subspaces of the upper reals; the reader unfamiliar with Scott-continuity may wish to treat them as a black box (for details, see e.g. \cite[\S1]{NVOstrowski} or ~\cite{Vi4}).

\begin{convention}[Subspaces of Upper Reals]\label{conv:specorder}\hfill 
	\begin{enumerate}[label=(\roman*)]
		\item The space of upper reals comes equipped with a Scott topology: $x\sqsubseteq y$ iff $x\geq y$. Notice that $\sqsubseteq$ is opposite to the usual numerical order. To reflect this, we use arrows on top of the spaces to show the direction of the refinement under their respective specialisation orders -- e.g. $\overleftarrow{[0,\infty)}$, $\overleftarrow{[-\infty,\infty)}$ etc.
		\item Notice that the previous one-sided intervals were closed at the arrowhead --- e.g. $0$ was included in $\overleftarrow{[0,\infty)}$ and $-\infty$ in $\overleftarrow{[-\infty,\infty)}$. In fact, this is canonical --- \textit{all} upper real intervals must be closed at the arrowhead. Why? Answer: the upper reals possess the Scott topology, and so all subspaces of the uppers must be closed under arbitrary directed joins with respect to $\sqsubseteq$.
	\end{enumerate}
	
\end{convention}

\subsubsection{Interlude: Classical vs. Point-free Perspective}\label{subsec:interlude} The claim that Dedekind reals can be characterised as models of a first-order theory will be provocative to the model theorist. In classical model theory, Dedekind reals typically arise not as models but as \textit{types} %\footnote{A warning: the model-theoretic notion of types is different from the notion of types used by type-theorists.}
over the model $M=(\mathbb{Q},<)$, i.e. the rationals considered as a dense linear order. 
We contextualise this via the language of filters:

%We contextualise this difference in perspective via the language of filters:
% maybe rephrase last sentence? Want to say smth like that there is something more general going on, and we provide the context for this differencce in logical perspectives via the language of filters. But this is too woolly.

\begin{discussion}[Types vs. Models as Filters]\label{dis:typesMODELSfilters}   Informally, a (complete) type $p$ over a model $M$ corresponds to an ultrafilter of the Boolean Algebra of definable subsets of $M$, which we denote as $\mathcal{B}_M$. Analogously, Section~\ref{sec:geomlogic} tells us that models of a propositional theory $\thT$ correspond to the completely prime filters of its Lindenbaum Algebra $\Omega_\thT$. The appearance of filters in both contexts is suggestive, but there is a subtlety. Since the geometric syntax does not have negation (cf. Remark~\ref{rem:geomsyntax}), the Lindenbaum Algebra $\Omega_{\thT}$ is generally not Boolean (and so its completely prime filters are generally not ultrafilters either). However, when $\Omega_{\thT}$ is in fact Boolean, then the prime filters of $\Omega_{\thT}$ turn out to be precisely its ultrafilters and so the two notions coincide.\footnote{Some care, however, needs to be taken regarding the distinction between prime vs. completely prime filters.} 
	%%% Completely prime filters have to be principal ultrafilters..
	%\footnote{Why? Recall: all prime filters of a Boolean lattice are also ultrafilters, and all completely prime filters are a fortiori prime filters. % In fact, the completely prime filters of a Boolean algebra are precisely the completely regular ultrafilters. Johnstone pp. 134 [check]},
	
	We can also phrase this in the language of points (in the sense of Definition~\ref{def:points}). A complete type over $M$ may be characterised as a Boolean homomorphism $\mathcal{B}_M\to\{0,1\}$ to the two-element Boolean algebra, whereas a global point of a localic space $[\thT]$ corresponds to a frame homomorphism $\Omega_{\thT}\rightarrow\Omega$, where $\Omega$ is the frame of truth values. Classically, these notions more or less coincide since LEM implies $\Omega\cong\{0,1\}$, but constructively this is no longer the case (cf. Remark~\ref{rem:filtersPTS}).

	%\footnote{There's an interesting question of what corresponds model-theoretically to the topos theorist's notion of a generalised (localic) point, i.e. a frame homomorphism $\Omega_{[\thT]}\rightarrow \Omega_{[\thT']}$ for propositional theories $\thT,\thT'$. On this, see \cite{BooleanTypes} for details on the so-called ``Boolean types''. See also \cite[\S 2.3]{HruLoe} for a discussion on the definable type as Boolean retractions.} 
\end{discussion}

\begin{discussion}\label{dis:LogicalComplexity} Viewed logically, the example of the Dedekind reals brings into focus a challenging connection between the model theorist's type spaces and the topos theorist's point-free spaces. Parsing their similarities and differences reflects the contrasting legacies of Shelah vs. Grothendieck on the development of modern logic.

	On the one hand, the model theorist and the topos theorist have developed very different understandings of logical complexity. Whereas model theorists typically tie the logical complexity of the theory to combinatorial questions (e.g. number of types, number of non-isomorphic models etc.), the topos theorist typically ties the logical complexity of a theory to its syntactic expressiveness (e.g. coherent, geometric, propositional vs. predicate etc. --- see \cite[Remark D1.4.14]{J2}). In the case of Dedekind reals, the model theorist regards them as evidence that the theory $\mathrm{Th}(\Q,<)$ is complex [more precisely, that $\mathrm{Th}(\Q,<)$ is unstable], whereas the topos theorist regards the theory of Dedekind reals as being particularly well-behaved [more precisely, it is an essentially propositional theory]. %%% The fact that we work with categorical logic introduces a nuance to the question: what does it mean to be equally expressive? categorical equivalence introduces a nuance: we also consider categories of models. An equivalence of categories of models is not the same as saying there is an isomorphism of elementary classes. See the discussion on propositional vs. essentially propositional theories; see also Boney-Vasey discussion. 
	
	 On the other hand, there appears to be some convergence in attitudes regarding the desired geometry of these logical spaces (compare e.g. the Galois-theoretic ideas in Joyal-Tierney's monograph \cite{JoyalTierney} with Hrushovski's recent preprints \cite{HrushovskiDefinable,HrushovskiLascar}). 
	Further development of these structural connections appear important, and will be the subject of future work. 
	%% how to extract model-independent information from these
	
	%% Combine test problems from model theory with the structure theorems from topos theory.
	%%% Hrushovski & inert spaces. No automorphisms. Morleyisation + Booleanisation; evidence from Espindola's work indicates that this brings into focus the saturated models of the theory; interesting to see what this means in relation to, e.g. Shelah's universality
	%%% Also, why not implement Berkovich ideas to fill in the holes of the Stone space of types? But maybe that's the wrong way to think about it. The main issue: Stone space is disconnected cos we use Boolean formulas as our opens, which are clopen. What is the analogue of Gelfand-Mazur in this case?
\end{discussion}

\begin{warning}\label{warning:DedekindsNOinfty} One should not jump to conclusions about the space of Dedekinds being equivalent to the Stone space of 1-types $\mathrm{S}_1(\Q,<)$. Unlike the latter, the space of Dedekinds does \textbf{not} contain infinities or infinitesimals --- its models are just the real numbers belonging to the interval $(-\infty,\infty)$. We can also understand the difference topologically: $\mathrm{S}_1(\Q,<)$ is a totally disconnected compact space whereas the space of Dedekind reals $\R$ is connected and non-compact.
\end{warning}

Aside from Warning~\ref{warning:DedekindsNOinfty}, there is a more fundamental reason to be cautious about overextending the informal picture of ``Models of a geometric theory $\approx$ Types arising from model theory''. As already mentioned, completely prime filters are not ultrafilters in general. An illuminating example would be the upper reals from before. 

\begin{discussion}\label{dis:POVonUPPERS} %\emph{(Perspectives on the Upper Real)} 
	Recall that an upper real is a real number that only records the rationals strictly larger than itself and nothing else. This results in some striking differences with the Dedekinds. Certainly the standard upper real does not correspond to a complete type over $(\mathbb{Q},<)$ (unless, of course, the upper real happens to be $\infty$ or $-\infty$). Further, since an upper real is blind to the rationals less than itself, we also cannot say when one upper real is \textit{strictly} smaller/greater than another. Moreover, recall that an upper reals is defined as a well-behaved subset of $\mathbb{Q}$, which are collectively ordered by subset inclusion. As such, the upper reals combine to form a non-Hausdorff space whose points `contain' each other.%\footnote{Why is an upper real not a complete type? Note that the upper real $2$ records all instances of rationals $q$ such that $2<q$. But this set of formulae will still be consistent if we add another formula, say $-4<-$.}
\end{discussion}

\begin{discussion}\label{dis:whyUPPER} 	This paper is interested in analysing the multiplicative seminorms on certain rings, natural in the context of Berkovich geometry. So why might the upper reals be a natural choice to work with as opposed to the usual Dedekinds? The reason will become clearer in due course. For now, just for orientation, let us note that Summary Theorem~\ref{sumthm:BERKOVICH} tells us that multiplicative seminorms on $K[T]$ correspond to a sequence of nested discs $D_{r_1}(k_1)\supseteq D_{r_2}(k_2) \dots $, suggesting that these multiplicative seminorms are being approximated ``from above''. Topologically, this matches more closely with the upper reals than the Dedekinds (cf. our comments before Fact~\ref{fact:UpperDed}.) 
%	 In addition, \cite[Discussion 5.9]{NVOstrowski} observes that Dedekind-valued absolute values on $\Q$ are necessarily positive definite whereas positive-definiteness is not well-defined for upper-valued absolute values on $\Z$. This gives more evidence that if we want to consider \emph{all} multiplicative seminorms, it is more natural (at least from the point-free perspective) to work with upper reals.
\end{discussion}

\subsection{The Berkovich Perspective} Let us review in broad strokes the motivation behind Berkovich geometry. In complex algebraic geometry, complex varieties can be regarded as complex manifolds and thus can be studied using powerful tools from complex analysis and differential geometry (see e.g. \cite[Appendix B]{Hartshorne}). The main barrier to playing the same game for non-Archimedean varieties is that the base field $K$ is totally disconnected. Berkovich's solution to this problem of disconnectedness is to fill in the ``missing points'', which we now discuss.

\subsubsection{\protect\dots on algebraic varieties} Let $(K,|\cdot|)$ be a non-Archimedean field, and $X$ be an affine variety over $K$, i.e. $X$ is the zero locus in $K^n$ of a finite set of polynomials $f_1,\dots,f_m\in K[x_1,\dots,x_n]$. Recall that the coordinate ring of $X$ is defined as 
\begin{equation}
K[X]:=K[T_1,\dots,T_n]/(f_1,\dots,f_m)
\end{equation}
One easily checks that every point $k\in X(K)$ gives rise to a multiplicative seminorm on $K[X]$
\begin{align}
|\cdot|_k\colon K[X] &\longrightarrow \R_{\geq 0}\\
f & \longmapsto |f(k)|, \nonumber 
\end{align}
otherwise known as the \emph{evaluation seminorm at $k$}. Extending this insight, one can then define the analytification of $X$ in terms of seminorms on its coordinate ring.

\begin{definition}[Analytification of Affine Varieties]\label{def:BerkAFFINE}  Fix a valued field $(K,|\cdot|)$, and let $X$ be an affine variety over $K$ with associated coordinate ring $K[X]$.	
	\begin{enumerate}[label=(\roman*)]
		\item Given a multiplicative seminorm on $K[X]$, which we denote 
		\begin{align}
		|\cdot|_x \colon K[X] \longrightarrow \R_{\geq 0}
		\end{align}
		we say that $|\cdot|_x$ \emph{extends the given norm on $K$} if $|k|_x=|k|$ for all $k\in K$. Notice that this includes the evaluative seminorms.
		\item The \emph{analytification of $X$}, denoted $X^{\an}$, is defined as the following point-set space:
		\begin{itemize}
			\item \textit{Underlying set of $X^{\an}$} = the set of all multiplicative seminorms on $K[X]$ extending the original norm $|\cdot|$ on the base field $K$; 
			\item \textit{Topology on $X^{\an}$} = the weakest topology such that all maps of the form
			\begin{align}\label{eq:BerkTopCTS}
			\psi_f\colon X^{\an} &\longrightarrow \R_{\geq 0}\\
			|\cdot|_x&\longmapsto |f|_x.  \nonumber
			\end{align}  
			are continuous, for any $f\in K[X]$, which we shall call the \textit{Berkovich topology}.\footnote{This is also sometimes called the \textit{Gelfand topology}.} For clarity, we emphasise that $\psi_f$ is a mapping on \textit{all} multiplicative seminorms on $K[X]$ extending $|\cdot|$ --- not just the evaluative seminorms from before. 
		\end{itemize}
	\end{enumerate}
\end{definition}

\begin{example} Given a non-Archimedean field $(K,|\cdot|)$, the underlying set of the Berkovich Affine line $\AffBerk^1$ is the set of multiplicative seminorms 
	\begin{equation}
	|\cdot|_x\colon K[T] \longrightarrow \R_{\geq 0}
	\end{equation}
	extending the norm on $K$, as already seen in Summary Theorem~\ref{sumthm:BERKOVICH}.
\end{example}

\begin{remark} The expert reader may have noticed Definition~\ref{def:BerkAFFINE} actually only defines the underlying topological space of the Berkovich analytification. To be correct, let us mention that the full definition of Berkovich analytification also includes a structure sheaf of analytic functions on $X^{\an}$, yielding a locally ringed space. For details, see \cite[Ch. 2-3]{BerkovichMonograph}.
\end{remark}

\begin{discussion}\label{dis:BerkovichTopology} 
	Regarding the topological aspects of Definition~\ref{def:BerkAFFINE}:
	\begin{enumerate}[label=(\roman*)]
		\item Despite its point-set formulation, the Berkovich analytification $X^{\an}$ is suggestive from the point-free perspective since the points of these spaces correspond to models of theories. In particular, let us mention \cite{NVOstrowski}, which highlights a subtle interaction between topology and algebra. Namely, if we define absolute values on $\Z$ using upper reals, then the Scott topology prevents us from defining positive definiteness. Conversely, if we consider absolute values on a field (e.g. $\Q$), then the absolute values must be valued in the Dedekinds -- this in turn can be shown to imply positive definiteness. This extends Discussion~\ref{dis:whyUPPER} on why it may be natural to work with the upper reals.
		\item The Berkovich topology can be more explicitly characterised as the weakest topology such that for all $f\in K[X]$ and for all $\alpha\in \R$, the sets
		\begin{align}\label{eq:BerkTOPbasics}
		U(f,\alpha):=\{ |\cdot|\in X^{\an} \,\big|\, |f|<\alpha \}\\
		V(f,\alpha):=\{ |\cdot|\in X^{\an} \,\big|\, |f|>\alpha \} \nonumber
		\end{align}
		are open in $X^{\an}$. Notice then that Berkovich topology crucially depends on the fact that $|\cdot|$ is valued in the Dedekinds as opposed to say, the upper reals.\footnote{Why? Note that $V(f,\alpha)$ from Equation~\eqref{eq:BerkTOPbasics} would no longer be well-defined since an absolute value $|\cdot|\colon K[X]\rightarrow \overleftarrow{[0,\infty)}$ valued in the upper reals is unable to sense which values are smaller than $|f|$, only those larger than it. See Discussion~\ref{dis:POVonUPPERS}.}
		%%  This follows from \cite[\S 1.1]{BakerRumeley}
		\item Why should we regard the space of multiplicative seminorms as filling in the ``missing points'' of the base field? Notice that Definition~\ref{def:BerkAFFINE} is defined for any valued field $K$, not necessarily non-Archimedean. By the Gelfand-Mazur Theorem, every multiplicative seminorm on $\C[T]$ corresponds to an evaluative seminorm, and so one deduces $\AffBerk^1\cong\C$ when $K=\C$. However, when $K$ is non-Archimedean, then there exist more multiplicative seminorms than just the evaluative seminorms.  %%% Say more about this in the tree? Points at the bottom correspond to evaluative seminorms.
	\end{enumerate}
\end{discussion}

In fact, Definition~\ref{def:BerkAFFINE} can be extended to the more general case of $K$-schemes of (locally) finite type (for details, see \cite[Ch. 2 - 3]{BerkovichMonograph}). One important appeal of the Berkovich analytification is that it constructs well-behaved spaces that are sensitive to the topological character of the original variety. 

\begin{summarytheorem}[{{\cite[\S 3.4-3.5]{BerkovichMonograph}}}]  Let $X$ be a $K$-scheme of finite type. Then $X^{\an}$ is locally compact and locally path-connected. Furthermore, we also have the following GAGA-type results:%\footnote{We remark that these GAGA-type results hold for both when $K$ has a trivial norm and $K$ has non-trivial non-Archimedean norm --- see Theorems 3.4.8 and of 3.5.3 of \cite{BerkovichMonograph}.}
	\begin{enumerate}[label=(\roman*)]
		\item $X$ is a connected scheme iff $X^{\an}$ is connected;
		\item $X$ is a separated scheme iff $X^{\an}$ is Hausdorff;
		\item $X$ is a proper scheme iff $X^{\an}$ is compact.
	\end{enumerate}	
\end{summarytheorem}
\begin{comment} \begin{proof} The fact that fact that any Berkovich $K$-analytic space is locally path-connected follows from \cite[Cor. 2.2.8]{BerkovichMonograph} whereas items (i) - (iii) are \cite[Theorem 3.4.8]{BerkovichMonograph}. Not sure why $X^{\an}$ is locally compact. 
\end{proof}
\end{comment}

%\noindent 
As a beautiful example of the interaction between (classical) logic and Berkovich geometry, let us also mention the following result by Hrushovski and Loeser.

\begin{theorem}[{{\cite[Theorem 14.4.1]{HruLoe}}}]\label{thm:BerkLOCALCONTRACT}  Let $X$ be a $K$-scheme of finite type. Then, its Berkovich analytification $X^{\an}$ is locally contractible. %K nontrivially valued?
\end{theorem}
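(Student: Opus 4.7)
The plan is to deduce local contractibility from the main deformation retraction theorem of \cite{HruLoe}, which asserts that for any quasi-projective $K$-variety $V$ and any semi-algebraic subset $W \subseteq V^{\an}$, there exists a strong deformation retraction $h\colon [0,1] \times W \to W$ onto a subset $\Sigma \subseteq W$ homeomorphic to a finite simplicial complex. I would first translate to the model-theoretic setting of the stable completion $\widehat{X}$, whose tautological topology agrees with the Berkovich topology on $X^{\an}$; this gives access to $\ACVF$-definability and the o-minimality of the value group sort $\Gamma$. Since local contractibility is a local property, I would reduce to the case where $X$ is affine and fix a point $x \in X^{\an}$.

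The strategy is then to build a fundamental system of contractible neighborhoods of $x$ by applying the retraction theorem to shrinking semi-algebraic neighborhoods. Concretely, I would choose a decreasing basis of semi-algebraic neighborhoods $W_n$ of $x$, which exists because the Berkovich topology is generated by sets of the form $U(f,\alpha)\cap V(g,\beta)$ from Equation~\eqref{eq:BerkTOPbasics}, and apply the deformation retraction theorem to each $W_n$ to obtain finite simplicial complexes $\Sigma_n \subseteq W_n$. Writing $r_n\colon W_n \to \Sigma_n$ for the associated retraction, the preimage $r_n^{-1}(\mathrm{star}(r_n(x)))$ of the open star of $r_n(x)$ in a suitable barycentric subdivision of $\Sigma_n$ should furnish an open neighborhood of $x$ that strong-deformation-retracts onto a contractible subcomplex, hence is contractible.

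The main obstacle is controlling the combinatorics of the retracts finely enough to guarantee contractibility, rather than merely obtaining a finite CW-structure. The technical mechanism I would exploit is the \emph{definable} nature of both the retraction $h$ and the simplicial structure on $\Sigma_n$: since both are controlled by data living in $\Gamma$-internal sets, o-minimal cell decomposition in $\Gamma$ allows one to refine the simplicial structure near $r_n(x)$ so that its open star is a contractible definable cell. A secondary technical point is ensuring uniformity as $n$ varies: one wants the retractions $r_n$ to assemble compatibly, which should follow from the fact that the deformation retraction theorem of \cite{HruLoe} is proved in definable families. Finally, compatibility of the Berkovich topology on $X^{\an}$ with the tautological topology on $\widehat{X}$, already part of the dictionary established in \cite{HruLoe}, transports the contractible neighborhood back to $X^{\an}$, completing the argument.
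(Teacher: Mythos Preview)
The paper does not prove this theorem: it is stated with a citation to \cite[Theorem 14.4.1]{HruLoe} and then used as background, with the surrounding text simply noting that Hrushovski--Loeser's model-theoretic techniques extended local contractibility beyond the smooth case previously handled by Berkovich. There is therefore no proof in the paper to compare your proposal against.

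That said, your sketch is broadly in the spirit of how the result is actually obtained in \cite{HruLoe}: one deduces local contractibility from the main deformation retraction theorem, using that the retract is (the topological realisation of) a finite simplicial complex and that open stars give contractible neighbourhoods whose preimages under the retraction furnish a basis. A couple of points in your write-up are looser than they should be. First, the appeal to ``o-minimal cell decomposition in $\Gamma$'' to refine the simplicial structure near $r_n(x)$ is not really the mechanism used; once you have a genuine finite simplicial complex, ordinary PL topology (barycentric subdivision and open stars) already gives contractible neighbourhoods, with no further o-minimality needed at that stage. Second, the ``uniformity as $n$ varies'' and compatibility of the $r_n$ is not required for local contractibility: you only need, for each neighbourhood $W$ of $x$, \emph{some} contractible open $U$ with $x\in U\subseteq W$, and a single application of the retraction theorem to $W$ suffices. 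Invoking definable families here is unnecessary and risks suggesting a stronger compatibility than is actually proved or needed.
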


\noindent Prior to Theorem~\ref{thm:BerkLOCALCONTRACT}, local contractibility was only known in the case of smooth Berkovich analytic spaces \cite{BerkovichPolystable}; by contrast, the model-theoretic techniques developed by Hrushovski and Loeser \cite{HruLoe} were sufficiently general to handle both the singular and non-singular cases. 

%\footnote{Technically, not analytification of Banach rings... maybe use the term spectra}}
\subsubsection{\protect\dots on Banach rings} In algebraic geometry, the basic building blocks are affine schemes, which are spectra of commutative rings. In Berkovich geometry, the analogous building blocks are the so-called Berkovich spectra of commutative Banach rings.
% (\cite[Ch. 1]{BerkovichMonograph}).  
%% analytification of varieties can be traced back to...?
%%% Say something about how K[T] is not a Banach ring. 

\begin{definition}[The Berkovich Spectrum]\label{def:BerkBANACHSpec}  Let $(\A,||\cdot||)$ be a commutative Banach ring\footnote{Recall: a Banach ring $(\A,||\cdot||)$ is a normed ring that is complete with respect to $||\cdot||$.} with identity.
	\begin{enumerate}[label=(\roman*)]
		\item A \emph{bounded} multiplicative seminorm on $\A$ is a multiplicative seminorm
		\begin{equation}
		|\cdot|_x\colon \A\longrightarrow \mathbb{R}_{\geq 0}
		\end{equation}
		that satisfies the inequality $|f|_x\leq ||f||$ for all $f\in\A$.
		\item  The \emph{Berkovich Spectrum $\M(\A)$} is the set of all bounded multiplicative seminorms on $\A$, equipped with the weakest topology such that the map
		\begin{align}
		\psi_f\colon \M(\A)&\longrightarrow \R_{\geq 0}\\
		|\cdot|_x &\longmapsto |f|_x \nonumber 
		\end{align}
		is continuous for all $f\in\A$.
	\end{enumerate}
\end{definition}

\begin{convention}[On the Berkovich Spectrum] \hfill  
	\begin{enumerate}[label=(\roman*)]
		\item In this section, all seminorms should be assumed to be multiplicative unless otherwise stated.
		\item 	The given norm on the Banach ring $\A$ will always be denoted as $||\cdot||$. By contrast, to emphasise that $\M(\A)$ is a topological space, its points will be represented as $|\cdot|_x$, or even $x\in \M(\A)$ when the context is clear. % For clarity, we again emphasise that $|\cdot|_x$ should not be assumed to be an evaluative seminorm on $\A$, unless explicitly stated.
	\end{enumerate}
	
\end{convention}

We illustrate this construction with examples. A few orienting remarks are in order.  First, notice there is nothing specifically non-Archimedean about Definition~\ref{def:BerkBANACHSpec} --- in fact, as we shall see in Example~\ref{ex:BerkINTEGERS}, the Berkovich spectrum of $\Z$ yields a space that naturally includes both Archimedean and non-Archimedean components. Another notable feature of Berkovich geometry is that the basic setup accommodates both the trivially and non-trivially valued fields --- see Example~\ref{ex:BerkPOWERseriesRING}.  Interestingly, this flexibility appears to be abandoned/lost in many modern approaches to the subject (see e.g. \cite{BakerRumeley,BenedettoBook}). Finally, the generality of Definition~\ref{def:BerkBANACHSpec} also allows us to define the Berkovich spectrum of an important class of Banach rings known as $K$-affinoid algebras --- its role in Berkovich geometry will be discussed in Example~\ref{ex:TATEalgebras}.

\begin{example}\label{ex:BerkINTEGERS} The ring of integers $(\Z,|\cdot|_{\infty})$ equipped with the usual Euclidean norm is a Banach ring. The characterisation of its Berkovich spectrum $\M(\Z)$ usually proceeds by a series of case-splittings.
	\begin{itemize}
		\item \textit{Stage 1: Identify the seminorms that fail positive definiteness.}  Any point $x\in M(\Z)$ corresponds to a seminorm $|\cdot|_x$ on $\Z$, which induces a prime ideal $\frap_{x}=\{n\,|\,|n|_x=0\}\subseteq \Z$. In the case where $\frap_x=p\Z$, deduce that $|\cdot|_x$ induces the unique trivial seminorm on $\mathbb{F}_p$, whereby
		\begin{equation}\label{eq:trivialnormFp}
		|\cdot|_x=|n|_{p,\infty}:=\begin{cases}
		0 \qquad \text{if} \, \,p|n\\
		1 \qquad \text{if otherwise}.
		\end{cases}
		\end{equation} % \footnote{Why? Notice that $|\cdot|_x$ induces a multiplicative seminorm on $\Z/\frap_x =\mathbb{F}_p$ essentially by hypothesis. One then easily checks that the only multiplicative norm on $\mathbb{F}_p$ is the one which coincides with Equation~\eqref{eq:trivialnormFp}.}
	Otherwise, note that the induced prime ideal $\frap_x=(0)$ must be the zero ideal.
		\item \textit{Stage 2: Classify the remaining seminorms.} Suppose $\frap_x=(0)$. By Ostrowski's Theorem, deduce that $|\cdot|_x$ must be one of the following:
		\begin{itemize}[label=]
			\item \textit{Case 2a:} $|\cdot|_x=|\cdot|_0$ is the trival norm on $\Z$.
			\item  \textit{Case 2b:} $|\cdot|_x=|\cdot|^{\alpha}_{\infty}$ for some $\alpha\in (0,1]$, where $|\cdot|_{\infty}$ is the usual Euclidean norm.
			\item \textit{Case 2c:} $|\cdot|_x=|\cdot|_p^{\alpha}$ for some $\alpha\in(0,\infty)$, where $|\cdot|_p$ is the standard $p$-adic norm.
		\end{itemize}
	\end{itemize}
	Assembling this data together, one obtains the following picture:%\footnote{This is almost identical to our picture of the topos of upper-valued absolute values on $\Z$ in \cite{NVOstrowski} \textit{except} for the fact that the intervals here are valued in Dedekinds as opposed to upper reals.		}:
	
	\begin{figure}[ht!]
		\centering
		\includegraphics[width=0.7\linewidth]{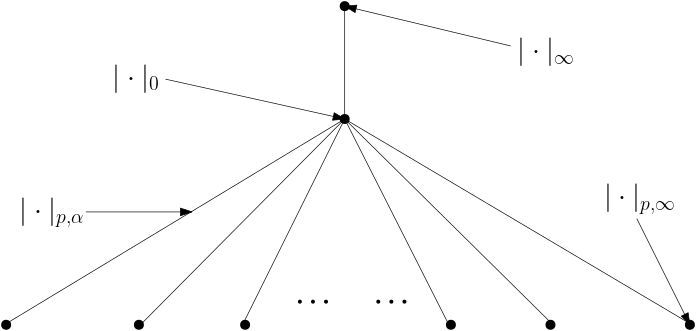}
		\caption{$\M(\Z)$}
		\label{fig:BerkovichSpecIntegers}
	\end{figure}
\end{example} 

%%% This is the completion of K[T] with respect to another norm. See Jonsson.
\begin{example}\label{ex:BerkPOWERseriesRING} Fix an algebraically-closed non-Archimedean field $(K,|\cdot|)$. We define the Banach ring $(\A,||\cdot||)$ whereby:
	\begin{itemize}
		\item $\A$ is the ring of power series converging in radius $R$
		\begin{equation}
		\mathcal{A}=K\{R^{-1}T\}:=\left\{f=\displaystyle\sum_{i=0}^{\infty}c_i T^i\,\Bigg|\, c_i\in K, \displaystyle\lim_{i\rightarrow\infty}|c_i|R^i =0 \right\}
		\end{equation}
		\item $||\cdot||$ is the so-called \emph{Gauss norm} 
		\begin{equation}
		||f||:=\sup_i|c_i|R^i,\quad \text{where $f\in\A$}.
		\end{equation}
	\end{itemize}
	The description of $\M(\A)$ differs depending on whether $K$ is trivially or non-trivially valued.
	\begin{itemize}
		\item \textit{Case 1: $K$ is trivially valued.} In which case,
		\begin{equation}
		K\{R^{-1}T\}=\begin{cases}
		K[[T]] \qquad \,\,\text{if}\, R<1 \\
		K[T] \qquad\quad \text{if}\, R\geq 1
		\end{cases}
		\end{equation}
		where $K[[T]]$ is the formal power series ring and $K[T]$ is the polynomial ring.\footnote{\label{footnote:POWERSERIEStrivialnorm} Why? Notice if $R<1$, then $\displaystyle\lim_{i\rightarrow\infty}|c_i|R^i$ is always zero since $|c_i|=0$ or 1; if $R\geq 1$ instead, then the sequence $\{c_i\}$ must eventually be 0.} When $R<1$, one checks that the the map $|\cdot|_x\mapsto |T|_x$ yields a homeomorphism $\M(\A)\cong [0,R]$; when $R\geq 1$, a more involved argument shows $\M(\A)$ has the structure of $\M(\Z)$ (see Figure~\ref{fig:BerkovichSpecIntegers}).
		\item \textit{Case 2: $K$ is non-trivially valued.} In which case, the characterisation of $\AffBerk^{1}$ in Summary Theorem~\ref{sumthm:BERKOVICH} extends here as well: all points of $x\in\M(\A)$ can be realised as
		\begin{equation}\label{eq:LIMofDISCS}
		|\cdot|_x=\lim_{n\to\infty} |\cdot|_{D_{r_i}(k_i)}
		\end{equation}
		for some nested descending sequence of discs  
		\begin{equation}
		D_{r_1}(k_1)\supseteq D_{r_2}(k_2)\supseteq\dots 
		\end{equation}
		where $|\cdot|_{D_{r}(k)}$ is a multiplicative seminorm canonically associated to the closed disc
		\begin{equation}\label{eq:PtSetcloseddisc}
		D_{r}(k):=\{b\in K \,\big| \, |b-k|\leq r\}.
		\end{equation}
Discs of this form shall be referred to as a \emph{rigid disc}. This description of $\M(\A)$ results in a complicated tree with infinite branching points:	
		
		\begin{figure}[ht!]
			\centering
			\includegraphics[width=0.8\linewidth]{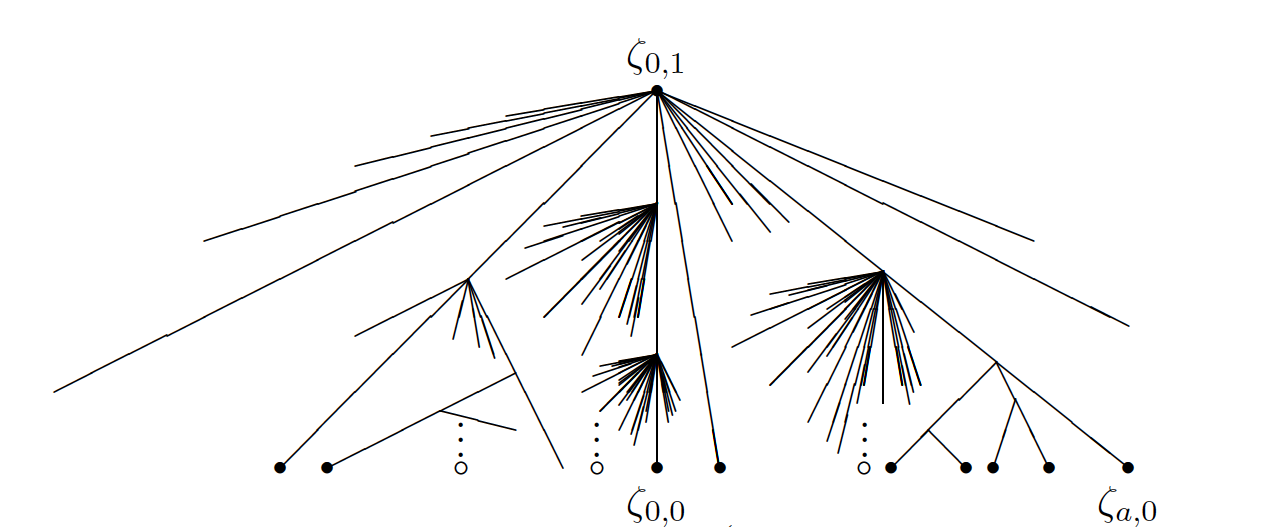}
			\caption{ $\M(K\{R^{-1}T\})$ when $R=1$, adapted from \cite{BakerRumeley,SilvermanDynamical}}
			
		\end{figure}
		
		The reader may wonder: where did we use the hypothesis that $K$ was non-trivially valued? 	Notice that the rigid discs in Equation~\eqref{eq:PtSetcloseddisc} are defined as subsets of $K$.  As such, in order for their radii to be well-defined (i.e. if $D_{r}(k)=D_{r'}(k')$ then $r=r'$) the base field $K$ is forced to be non-trivially valued.\footnote{\label{footnote:RADIIdisc} Why? If $K$ is equipped with a trivial norm, then by definition $|k|=1$ for all $k\neq 0$ in $K$. In which case, $D_{r}(k)=\{k\}$ for any $k\in K$ whenever $r<1$.} In fact, since $|\cdot|_{D_{r}(k)}$ is defined by sending
			\begin{align}
		|f|_{D_{r}(k)}:=\sup_{z\in D_{r}(k)} |f(z)|,
		\end{align}
		 these multiplicative seminorms begin to collapse into one another when $K$ is trivially-valued, causing the original approximation argument to break down.
	\end{itemize}
\end{example}

\begin{remark}\label{rem:AffineLineNOTBerkSpectrum} The Berkovich Affine Line $\AffBerk^1$ is defined as the space of multiplicative seminorms on $K[T]$. Is $\AffBerk^1$ therefore just another example of a Berkovich spectrum? The answer, perhaps surprisingly, is generally no. Recall: Berkovich spectra are defined for \emph{Banach Rings}. When $K$ is non-trivially valued, one can check that $K[T]$ fails to be complete with respect to the Gauss norm $||\sum a_iT_i||=\max_{i}|a_i|$. However, two important caveats: 
	\begin{enumerate}[label=(\alph*)]
		\item One can also check that $K\{R^{-1}T\}$ is in fact a Banach ring (with respect to the appropriate Guass norm), and that $\AffBerk^1$ can be represented as an infinite union of Berkovich spectra
		\[\AffBerk^1\cong \bigcup_{R>0}\M(K\{R^{-1}T\}).\]
		\item When $K$ is trivially valued, then $K[T]$ turns out to be complete with respect to $||\cdot||$ and thus defines a Banach ring; in which case, the two constructions do coincide. This gives another way of reading the difference between the trivially vs. non-trivially valued fields in the Berkovich setting.
	\end{enumerate}
	For details, see e.g. \cite[Example 1.4.4]{BerkovichMonograph} or \cite[Ch. 1-2]{BakerRumeley}. 
\end{remark}

\begin{example}\label{ex:TATEalgebras}  Extending Example~\ref{ex:BerkPOWERseriesRING}, given $R_1,\dots,R_n>0$, define the ring
	\begin{equation}
	K\{R_1^{-1}T_1,\dots, R_n^{-1}T_n\}:=\left\{f=\displaystyle\sum_{\vv\in\N^{n}}a_{\vv} T^\vv\,\Bigg|\, a_\vv\in K, \displaystyle\lim_{|\vv|\rightarrow\infty}|a_\vv|R^\vv =0 \right\},
	\end{equation}
	where $|\vv|=\vv_1+\dots+\vv_n$ and $R^{\vv}=R_1^{\vv_1}\dots R_n^{\vv_n}$. To turn 
	$	K\{R_1^{-1}T_1,\dots, R_n^{-1}T_n\}$ into a Banach ring, we equip it with the Gauss norm $||\cdot||$ where	
	\begin{equation}
	||f||:= \sup_{\vv} |a_{\vv}|R^\vv.
	\end{equation}
	When $R_i=1$ for all $i$, then $K\{R_1^{-1}T_1,\dots, R_n^{-1}T_n\}$ is called the \emph{Tate Algebra}.
	
	These Banach rings play an important role in Berkovich geometry because they allow us to define \emph{$K$-affinoid algebras}, which are the quotients of these power series rings. More precisely, a $K$-affinoid algebra $\A$  is a commutative Banach ring for which there exists an admissible epimorphism 
	\begin{equation}
	\begin{tikzcd}
	K\{R_1^{-1}T_1,\dots, R_n^{-1}T_n\} \ar[r, two heads] & \A.
	\end{tikzcd}
	\end{equation}
	This should be understood as being the analogues of quotients of polynomial rings in classical scheme theory; in particular, one constructs a Berkovich $K$-analytic space by gluing together the Berkovich spectra $\M(\A)$ of these $K$-affinoid algebras. %%% Affinoid Algebras give rise to affinoid spaces. Berkovich K-analytic spaces is defined by a triple (X, \calA, \tau), where  X is a loc. Haus. space, \tau is a collection of compact subsets of X, and \calA is an atlas sending each compact subset to a K-affinoid space. See Jonsson p.83 Lecture notes.
\end{example}

\section{Berkovich's Classification Theorem, Revisited}\label{sec:BerkClassThm} 
An organising theme of this section is the language of filters, which gives a transparent way of understanding how certain key notions in topology, logic and non-Archimedean geometry may interact.

We motivate our study by way of a biased historical overview. On the side of geometry, the fact that the points of a Berkovich spectrum\footnote{Here, we assume that $\A$ is strictly $K$-affinoid and $K$ has non-trivial valuation.} $\M(\A)$ may be characterised as ultrafilters was already known by the 1990s \cite[Remark 2.5.21]{BerkovichMonograph}; on the side of logic, the fact that the (complete) types over a model may also be characterised as ultrafilters was well understood by the 1960s \cite{Morley}, if not earlier. Yet it was only within the last 10 years that the two perspectives started to converge. Most notably, fixing a valued field $K$ of rank 1, Hrushovski and Loeser \cite[\S 14.1]{HruLoe} showed that the Berkovich analytification of any quasi-projective variety $V$ over $K$ can be described using the language of definable types (cf. item (iii) of Summary Theorem~\ref{sumthm:BERKOVICH}). The power of this logical perspective may be measured by the fact that the authors were able to establish many deep results (e.g. Theorem~\ref{thm:BerkLOCALCONTRACT}) that were inaccessible to previous methods (at least, without relying on stronger hypotheses e.g. smoothness).

This sets up our present investigation. The understanding that models of a propositional geometric theory can be characterised as completely prime filters is well known to topos theorists, although the connections with model-theoretic types appear to be undeveloped (but see Discussion~\ref{dis:typesMODELSfilters}). In this section, we follow the model theorist's cue and use point-free techniques to study the points of the Berkovich spectra $\M(K\{R^{-1}T\})$ from Example~\ref{ex:BerkPOWERseriesRING}. % The main surprise is that, unlike Berkovich's original classification result, the point-free approach works equally well for both the trivially and non-trivially valued fields. This indicates that the \emph{algebraic} hypothesis of $K$ being non-trivially valued is in fact a \emph{point-set} hypothesis, and is not essential to the underlying mathematics.

\subsection{Berkovich's Disc Theorem} We fix the following hypothesis for the rest of this section. %%% Until otherwise stated, we fix the following hypothesis.

\begin{hypothesis}\label{hyp:BerkovichCLASSIFICATION}\hfill 
	\begin{enumerate}[label=(\roman*)]
 \item $K$ is an algebraically closed field, complete with respect to a non-Archimedean norm $|\cdot|$. We emphasise that $|\cdot|$ need not be non-trivial. 
\item Abusing notation, we use $K$ to denote its underlying set of points.\footnote{This is all standard if one works classically. In point-set topology, one considers the points of a space as a set of elements (in contrast to point-free topology), and thus it is natural to denote the underlying set also as $K$. However, subtleties arise in point-free topology -- see Summary~\ref{sum:classicalVSgeometric} for more discussion.}
%		\item $K$ is a geometric field (= for all $k\in K$, either $k=0$ or $k$ is a unit). Abusing notation, we use $K$ to denote its underlying set of points.\footnote{\label{footnote:classHYPOTHESIS} This is all standard if one works classically. In point-set topology, one considers the points of a space as a set of elements (in contrast to point-free topology), and thus it is natural to denote the underlying set also as $K$. Similarly, it is classically trivial that any element of a field must either be 0 or a unit. However, in point-free topology, it is more natural to work \emph{geometrically} instead, i.e. using colimits and finite limits (see e.g. \cite{VickersPtfreePtwise}). In which case, we no longer get for free that all fields are geometric. As such, we give this property explicitly here just to maintain as much geometricity as possible.}

		\item $R$ denotes any positive Dedekind real $R>0$, and $Q_+$ denotes the set of positive rationals. 
		\item Define $K_R:=\{k\in K \, |\, |k|\leq R \}$.
		\item Following Example~\ref{ex:BerkPOWERseriesRING}: define a Banach ring $(\A,||\cdot||)$, where $\A=K\{R^{-1}T\}$ denotes the ring of power series converging on radius $R$, and $||\cdot||$ denotes the associated Gauss norm.
	\end{enumerate}
\end{hypothesis}

%\begin{observation} The Berkovich spectrum $\M(\A)$ is sometimes called the \textit{Berkovich disc of radius $r$}. \end{observation}

In order to classify the bounded multiplicative seminorms on $\A$, we first reduce our study to something algebraically simpler. 

\begin{definition}[Bounded $K$-Seminorms]\label{def:kseminorm} \hfill 
	\begin{enumerate}[label=(\roman*)]
		\item Define the space of linear polynomials $\Alin$  as
		\begin{equation}
		\Alin:=\{ aT-b \,|\, a,b \in K\}\cong K^2.
		\end{equation}
		In particular, by setting $a=0$, notice that $K\subset \Alin$.

		\item We define a \emph{$K$-seminorm} on $\Alin$ to be an upper-valued map
		\begin{equation}
		|\cdot|_x\colon \Alin\longrightarrow \overleftarrow{[0,\infty)}
		\end{equation}
		such that 
		\begin{itemize}
			\item \emph{(Preserves constants)} $|a|_x$ = the right Dedekind section of $|a|$
			% more formally: $\forall q\in Q_+$. $|a|_x<q \leftrightarrow |a|<q$
			\item \emph{(Semi-multiplicative)} $|aT|_x=|a|\cdot|T|_x$
			\item \emph{(Ultrametric Inequality)} $|f+f'|_x\leq \max\{|f|_x,|f'|_x\}$
		\end{itemize}
		for all $a\in K$, and $f,f'\in\Alin$.
		\item We define the \emph{Gauss Norm on $\Alin$} as
		\[||aT-b||:= \text{right Dedekind section of} \max\{|a|R, |b|\}\]
		where $aT-b\in\Alin$. We call a $K$-seminorm $|\cdot|_x$ is called  \emph{bounded} if $|\cdot|_x\leq ||\cdot||$.
	\end{enumerate}
\end{definition}

\begin{remark}\label{rem:Kseminorms} It is well-known \cite[Lemma 1.1]{BakerRumeley} that any bounded multiplicative seminorm $|\cdot|_x$ satisfies $|a|_x=|a|$ and $|f+g|_x\leq \max\{|f|,|g|\}$ for any $f,g\in\A$ --- this justifies the axioms we chose in Definition~\ref{def:kseminorm}(ii). Notice also that we did not require a $K$-seminorm to be multiplicative (only semi-multiplicative), but this is reasonable since $\Alin$ is not closed under multiplication.%\footnote{Notice: while there exists a natural additive structure on the linear polynomials, the product of two linear polynomials is no longer linear.}
\end{remark}

\begin{reminder} To eliminate potential confusion:
	\begin{itemize}
		\item A \emph{multiplicative seminorm} is defined on the whole ring $\A$ of convergent power series.
		\item A \emph{$K$-seminorm}, which is not multiplicative, is only defined on the space of linear polynomials $\Alin$.
	\end{itemize}
\end{reminder}

We justify the reduction to linear polynomials in the following Preparation Lemma.

\begin{lemma}[Preparation Lemma]\label{lem:PREPARATORY}\hfill 
	\begin{enumerate}[label=(\roman*)]
		\item Let $(\mathcal{B},||\cdot||_{\mathcal{B}})$ be an arbitrary Banach ring with dense subring $\mathcal{B}'$. Then, any bounded multiplicative seminorm on $\mathcal{B}$ is determined by its values on $\mathcal{B}'$.
		\item Any bounded multiplicative seminorm on $\A$ is determined by its values on linear polynomials $T-a$, where $a\in K_R$. The same also holds for bounded $K$-seminorms on $\Alin$.
	\end{enumerate}
\end{lemma}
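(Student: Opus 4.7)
The plan is to handle (i) via a standard density-and-continuity argument, then to derive (ii) by combining (i) with density of $K[T]$ in $\A$ and the factorisation of polynomials into linear factors available over an algebraically closed $K$.

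For (i), suppose $|\cdot|_x$ and $|\cdot|_y$ are bounded multiplicative seminorms on $\mathcal{B}$ agreeing on $\mathcal{B}'$. The (ultrametric or ordinary) triangle inequality gives
\[
\bigl||f|_x - |g|_x\bigr| \;\leq\; |f - g|_x \;\leq\; \|f - g\|_{\mathcal{B}},
\]
so both seminorms are Lipschitz-continuous as maps $\mathcal{B} \to \R_{\geq 0}$. For $f \in \mathcal{B}$, pick $f_n \in \mathcal{B}'$ with $f_n \to f$; continuity and agreement on $\mathcal{B}'$ give $|f|_x = \lim_n |f_n|_x = \lim_n |f_n|_y = |f|_y$.

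For the $\A$-case of (ii), the truncations $f_N = \sum_{i \leq N} c_i T^i$ of $f = \sum_i c_i T^i \in \A$ satisfy $\|f - f_N\| = \sup_{i > N}|c_i|R^i \to 0$ by definition of $\A$, so $K[T]$ is dense in $\A$ and (i) reduces the problem to polynomials. Since $K$ is algebraically closed, any $p \in K[T]$ factors as $p = c\prod_i(T - a_i)$, whence multiplicativity together with Remark~\ref{rem:Kseminorms} gives $|p|_x = |c|\prod_i|T - a_i|_x$; the task therefore reduces to determining $|T - a_i|_x$ for each root $a_i$. For $a_i \in K_R$ these are exactly the given values. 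For $a_i \notin K_R$, boundedness forces $|T|_x \leq \|T\| = R < |a_i|$, so the standard strict-inequality case of the ultrametric rule ($|u+v| = \max(|u|, |v|)$ whenever $|u| \neq |v|$) yields $|T - a_i|_x = |a_i|$, which depends only on the field norm. Thus the values $\{|T - a|_x : a \in K_R\}$ pin down $|\cdot|_x$ on $K[T]$ and hence on $\A$. The $\Alin$-case needs no density step: any $aT - b \in \Alin$ is either a constant $-b$ (whose seminorm is fixed by the preservation-of-constants axiom) or equals $a(T - b/a)$ with $a \neq 0$, in which case semi-multiplicativity gives $|aT - b|_x = |a|\cdot|T - b/a|_x$, and the same ultrametric argument dispatches the subcase $b/a \notin K_R$.

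The main subtlety I anticipate lies in (i): the density-and-continuity argument tacitly uses Hausdorffness of the codomain, which is transparent for Dedekind-valued bounded multiplicative seminorms but would require care if one ever wished to extend (i) to upper-real-valued seminorms. Since here (i) is only invoked for Dedekind-valued multiplicative seminorms on $\A$ -- and the $\Alin$-case, where seminorms take values in the upper reals, is settled by the purely algebraic argument above without any density input -- this subtlety does not obstruct the proof.
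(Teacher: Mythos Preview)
Your argument is correct and tracks the paper's closely: part~(i) is the same continuity-plus-density estimate, and part~(ii) likewise reduces first to $K[T]$ by density and then to linear factors by algebraic closure. The one substantive divergence is in the final reduction from $T-b$ with arbitrary $b\in K$ to $T-a$ with $a\in K_R$. The paper handles this by polynomial division---writing $T=q(T-b)+z$ and reading off $z\in K_R$ from the Gauss norm---whereas you invoke the strict ultrametric rule directly: boundedness gives $|T|_x\leq\|T\|=R<|b|$, whence $|T-b|_x=|b|$, a value depending only on the field norm. Your route is the more standard one in non-Archimedean analysis and is arguably cleaner; the paper's version has the advantage of foreshadowing the Weierstrass-preparation viewpoint it comments on immediately afterward.

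One small point worth a sentence in your write-up: when you say ``the same ultrametric argument dispatches the subcase $b/a\notin K_R$'' for $K$-seminorms on $\Alin$, remember that these are upper-real-valued, and upper reals have no native strict order, so the rule ``$|u|\neq|v|\Rightarrow|u+v|=\max\{|u|,|v|\}$'' is not literally available. The argument still goes through---$R<|b/a|$ as Dedekinds supplies a separating rational $r_0$ with $|T|_x<r_0$ but not $|b/a|<r_0$, and upward-closure of the upper real $|T-b/a|_x$ finishes it---but this is exactly the kind of care you rightly flag for~(i), and it deserves the same acknowledgement here.
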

\begin{proof} (i) Fix a bounded multiplicative seminorm $|\cdot|_x$ on $\mathcal{B}$, and suppose $f\in\mathcal{B}$. Next, for any positive rational $\epsilon>0$ and any $g\in\mathcal{B'}$ such that $||f-g||_{\mathcal{B}}<\epsilon$, compute:
	\begin{equation*}
	|f|_x\leq |g|_x + |f-g|_x\leq |g|_x + \epsilon,
	\end{equation*}
	\begin{equation*}
	|g|_x\leq |f|_x + |f-g|_x\leq |f|_x + \epsilon.
	\end{equation*}
	Hence, deduce that if $g\to f$ with respect to $||\cdot||_{\mathcal{B}}$, then $|g|_x\to |f|_x$. The claim then follows from $\mathcal{B}'$ being a dense subalgebra in $\mathcal{B}$.\newline  
	%%% In particular, note the equations give |f|_x\leq |g|_x +\epsilon \leq |f|_x + 2\epsilon. Observe what happens as $\epsilon \to 0$.
	
	\noindent (ii) The following two basic observations get us almost all the way:
	\begin{enumerate}[label=(\alph*)]
		\item The polynomial ring $K[T]$ is a dense subalgebra in $\A$, since any $f\in\A$ can be expressed as
		\[f=\sum_{i=0}^{\infty}a_iT^i = \lim_{n\to\infty}\sum^{n}_{i=0}a_i T^i.\]
		\item Since $K$ is algebraically closed, any polynomial $g\in K[T]$ can be expressed as 
		\[	g=c \cdot \prod^m_{j=1} (T-b_j),\]
		where $c,b_j\in K$, for all $1\leq j\leq m$. 
	\end{enumerate}
	Applying item (i) of the Lemma, Observations (a) and (b) together imply that any bounded multiplicative seminorm $|\cdot|_x$ on $\A$ is determined by its values on linear polynomials $T-b$ with $b\in K$. 
	
	In fact, a simple argument shows that we can restrict to just the linear polynomials $T-a$ where $a\in K_R$. Denote $h:=T-b$ to be a linear polynomial where $b\in K$. By polynomial division, we can represent the polynomial $T$ as 
	\[T = qh + z.\]
	Taking the Gauss norm $||\cdot||$ on both sides of the equation, we obtain:
	\[R=||T||=\max\{||qh||\,,\, ||z||\},\] %%% this is by definition of the Gauss norm.
	and so deduce $z\in K_R$. Since it is clear that $q$ must be a unit in $K$, we get
	\[q^{-1}\cdot (T-z) = h = T - b,\]
	and so our claim follows. The argument for the $K$-seminorm case is virtually identical.
\end{proof}

\begin{remark} For the reader familiar with non-Archimedean geometry: the proof of Preparation Lemma~\ref{lem:PREPARATORY} gives a prototype argument that can be generalised to prove the well-known Weierstrass Preparation Theorem \cite[Prop. 1.9.10]{JonssonWeierstrass}, which informally says: convergent power series often look like polynomials when restricted to a closed disc. Stated more precisely: all non-zero $f\in\A$ with finite order $m\geq 0$ can be represented as 
	\begin{equation}
	f= W\cdot u(T),
	\end{equation}
	where $u(T)$ is a unit power series on the closed disc $K_R$, and $W$ is a monic polynomial (the ``Weierstrass polynomial'') with both degree and order $m$. However, unlike e.g. \cite{BakerRumeley}, we shall avoid invoking the Weierstrass Preparation Theorem since it only applies to non-zero $f\in\A$ with \emph{finite order}. This will create issues if we wish to consider $K\{R^{-1}T\}$ with irrational radius $R$, which we can avoid with our approach.
\end{remark}

We now introduce a special class of filters that highlights the topological structure of $\M(\A)$.

\begin{definition}[Formal Non-Archimedean Balls]\label{def:formalballs} A \textit{formal non-Archimedean ball} is an element $(k,q)\in K_R\times \qp$. We represent this using the more suggestive notation $B_q(k)$, to emphasise that we should view this pair as denoting a disc of radius $q$ centred at $k$. In particular, we write:
	\[B_{q'}(k')\subseteq B_{q}(k) \, \,\text{just in case} \,\, |k-k'|<q \land q'\leq q.\]
\end{definition}

\begin{observation}\label{obs:BALLsequents} By decidability\footnote{That is, for any $q,r\in\qp$, we (constructively) know that $q<r$, $q=r$ or $q>r$.}  of $<$ on $\qp$, one obtains the following sequents from the definition of $\subseteq$ from Definition~\ref{def:formalballs}:
	\begin{enumerate}[label=(\roman*)]
		\item $|k-k'|<q\longrightarrow B_{q}(k')=B_{q}(k)$.
	%	\item $q\in \qp$ and $ |k-k'|<q'\longrightarrow B_{q}(k')=B_{q}(k)$ or $ q<q'$
		\item $B_{q}(k)=B_{q'}(k')\longrightarrow q=q'$
	\end{enumerate}
\end{observation}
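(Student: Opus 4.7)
The plan is to read this as a routine unfolding of Definition~\ref{def:formalballs}, after first pinning down what ``$=$'' means on formal balls. Since $B_q(k)$ is literally the pair $(k,q)\in K_R\times\qp$, literal equality cannot be the intended meaning of (i) (the pairs $(k,q)$ and $(k',q)$ are different whenever $k\neq k'$). The natural reading in this filter-theoretic context is that ``$B_q(k)=B_{q'}(k')$'' abbreviates mutual containment $B_q(k)\subseteq B_{q'}(k')$ and $B_{q'}(k')\subseteq B_q(k)$ -- i.e.\ the two formal balls generate the same principal up-set under $\subseteq$. (Equivalently, they determine the same seminorm $|\cdot|_{B_q(k)}$ on $\A$.) I would state this interpretation explicitly before beginning.

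For (i), assume $|k-k'|<q$. Using that $|\cdot|$ is a norm (so $|k-k'|=|k'-k|$) and that $q\leq q$ trivially, both inequalities appearing in the definition of $\subseteq$ are satisfied in both directions. Thus $B_q(k)\subseteq B_q(k')$ and $B_q(k')\subseteq B_q(k)$, giving the sequent.

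For (ii), assume $B_q(k)=B_{q'}(k')$ under the agreed interpretation. Unfolding $B_q(k)\subseteq B_{q'}(k')$ yields $q\leq q'$, and $B_{q'}(k')\subseteq B_q(k)$ yields $q'\leq q$; antisymmetry of $\leq$ on $\qp$ then gives $q=q'$. Decidability of $<$ on $\qp$ enters here only implicitly, through the fact that $\leq$ on the rationals is an honest decidable total order, so that $q\leq q'$ and $q'\leq q$ yield equality of rationals without appealing to any completion or Dedekind/upper-real subtlety.

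I do not expect any real obstacle: both sequents are essentially conjunction-elimination plus symmetry of the non-Archimedean norm. The only point requiring care is the preliminary one of fixing the meaning of ``$=$'' on formal balls; once that is done, the rest is bookkeeping on $\qp$. If the author instead wants the equalities to be interpreted as equalities of \emph{subsets} $\{b\in K\mid |b-k|\leq q\}$ of $K$ (which would foreshadow the passage from formal balls to rigid discs in Example~\ref{ex:BerkPOWERseriesRING}), exactly the same two containments and the same rational-order argument apply, so the proof is insensitive to the choice.
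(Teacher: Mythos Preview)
Your proposal is correct and matches the paper's intent: the paper gives no explicit proof of this Observation, treating both sequents as immediate from Definition~\ref{def:formalballs}, and your unpacking via mutual containment is precisely the intended reading (as confirmed by the later use in the proof of Theorem~\ref{thm:Alin=rgoodFILTER}, where equalities like $B_{q'}(k')=B_{q'}(j)$ are chained with $\subseteq$). Your explicit remark that ``$=$'' must mean mutual containment rather than literal equality of pairs is a genuine clarification the paper leaves tacit, and your observation that decidability of $<$ on $\qp$ is not strictly needed for (ii) --- antisymmetry of $\leq$ suffices --- is accurate, the paper's invocation of decidability being more of a general framing than a precise dependency.
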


\begin{discussion}[Rigid discs vs. Formal balls]\label{dis:pointSEThypotheses}  Notice item (ii) of Observation~\ref{obs:BALLsequents} says that the radii of the formal balls are well-defined (essentially by construction), even when the norm $|\cdot|$ on $K$ is trivial. This should be contrasted with the classical rigid discs from Example~\ref{ex:BerkPOWERseriesRING} 
	\begin{equation}
	D_{r}(k):=\{b\in K \,\big| \, |b-k|\leq r\},
	\end{equation}
	whose radii are well-defined only if $|\cdot|$ is non-trivial due to the point-set formulation. %(cf. Footnote~\ref{footnote:RADIIdisc}).
	%Say something about the tension between closed and opens balls, and how in the non-Archimedean case, we have a rich supply of clopens.. 
	
\end{discussion}
\begin{remark} The language of formal balls may strike the classical reader as a peculiar abstraction, but in fact they are sufficiently expressive to provide point-free accounts of standard completions of metric spaces \cite{ViLoccompI,ViLocCompII}. Once the connection between the Berkovich construction and the completion of a space is made precise, these techniques can be adjusted accordingly to our present context.\footnote{One important adjustment is that the \emph{non-strict} order defined in Definition~\ref{def:formalballs} is quite different from the \emph{strict} order defined in \cite{ViLoccompI,ViLocCompII}, but this reflects the fact that closed discs in non-Archimedean topology are also open.}
\end{remark}

\begin{definition}[Filters of Formal Non-Archimedean Balls]\label{def:filtersformalballs} A \emph{filter} of formal non-Archimedean balls $\F$ is an inhabited\footnote{The classical reader may substitute mentions of ``inhabited'' with ``non-empty'' without too much trouble.} subset of $K_R\times \qp$ satisfying the following conditions:
	\begin{itemize}
		\item \textit{(Upward closed with respect to $\subseteq$)} If $B_{q'}(k')\subseteq B_{q}(k)$ and $B_{q'}(k')\in \F$, then $B_{q}(k)\in \F$.
		\item \textit{(Closed under pairwise intersections)} If $B_{q}(k),B_{q'}(k')\in \F$, then there exists some $B_{r}(j)\in \F$ such that $B_r(j)\subseteq B_{q}(k)$ and $B_r(j)\subseteq B_{q'}(k')$. %For technical reasons, if $k,k'$ are non-zero, we shall require that $j$ in $B_{r}(j)$ be non-zero as well.
	\end{itemize}
	Further, we call $\F$ an \emph{$R$-good} filter if it also satisfies the following two conditions:
	\begin{itemize}
		\item For any $k\in K_R$, and $q\in Q_+$ such that $R<q$, $B_{q}(k)\in\mathcal{F}$. 
		\item  If $B_{q}(k)\in\F$, there exists $B_{q'}(k')\in \F$ such that $q'<q$.
	\end{itemize}
\end{definition}

\begin{convention} In this section, unless stated otherwise:
	\begin{itemize}
		\item A ball $B_q(k)$ will always mean a formal non-Archimedean ball (Definition~\ref{def:formalballs}).
		\item A filter $\F$ will always means a filter of formal non-Archimedean balls (Definition~\ref{def:filtersformalballs}).
	\end{itemize}
\end{convention}

\begin{comment}
\begin{observation} Any filter $\F$ is well-ordered by $\subseteq$, i.e. $\F$ is a nested descending sequence of balls.
\end{observation}
\end{comment}

\begin{observation}[Radius of $R$-Good Filters]\label{obs:FILTERradius} Let $\F$ be an $R$-good filter. Define the \emph{radius} $\rf$ of $\F$ as:
	\[\rf:=\{q\in Q _+|\, B_q(k)\in\F\,\}.\]
	Then, $\rf$ defines an upper real in $\overleftarrow{[0,R]}$.
\end{observation}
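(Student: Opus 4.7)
The plan is to unpack the definitions and verify the two axioms of an upper real (upward-closure and roundedness) directly from the defining conditions of an $R$-good filter, and then check that the resulting upper real sits in $\overleftarrow{[0,R]}$.

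For upward-closure, suppose $q \in \rf$, so there exists $k \in K_R$ with $B_q(k) \in \F$, and fix $q < q'$ in $Q_+$. I would show that $B_{q'}(k) \in \F$ by observing that $B_q(k) \subseteq B_{q'}(k)$ in the sense of Definition~\ref{def:formalballs}: indeed $|k-k| = 0 < q'$ and $q \leq q'$. Upward-closure of $\F$ with respect to $\subseteq$ (the first clause of Definition~\ref{def:filtersformalballs}) then gives $B_{q'}(k) \in \F$, so $q' \in \rf$.

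For roundedness, suppose $q \in \rf$, witnessed by some $B_q(k) \in \F$. The fourth defining clause of an $R$-good filter produces some $B_{q'}(k') \in \F$ with $q' < q$, which by definition means $q' \in \rf$. This is exactly the roundedness condition. Finally, to see $\rf$ lies in $\overleftarrow{[0,R]}$, note that the third $R$-good clause gives $B_q(k) \in \F$ for every $q \in Q_+$ with $R < q$ (and any fixed $k \in K_R$, using that $\F$ is inhabited so $K_R$ must be inhabited, or simply picking any such $k$); thus $\rf$ contains every rational strictly greater than $R$, which is precisely what it means for the upper real to be bounded above by $R$.

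The argument is almost entirely bookkeeping once the definitions are aligned; the only mild subtlety is pairing the two ``geometric'' clauses of an $R$-good filter with the two abstract axioms of an upper real (Definition~\ref{def:upperreal}), namely recognising that the ``there is a strictly smaller ball'' clause encodes roundedness, and that the ``all balls of radius $> R$ belong to $\F$'' clause encodes the bound $\rf \sqsubseteq R$ in the Scott order on $\overleftarrow{[0,R]}$. No further work is needed: the closure of pairwise intersections is not used in this particular observation (it will matter later when extracting the centre), so I would not invoke it here.
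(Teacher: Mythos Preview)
Your proposal is correct and follows essentially the same route as the paper: the paper's proof simply asserts that roundedness and upward closure are ``immediate from the definition of $\F$ being an $R$-good filter,'' and then records the two bounds $0\leq \rf$ (since $\rf\subseteq Q_+$) and $\rf\leq R$ (from the $R$-good clause). Your version spells out these steps in more detail; the only minor omission is that you do not explicitly note the lower bound $0\leq \rf$, but this is trivial since $\rf$ is a subset of positive rationals by definition.
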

\begin{proof} Roundedness and upward closure is immediate from the definition of $\F$ being an $R$-good filter, and so $\rf$ defines an upper real. The fact that $\rf\in\overleftarrow{[0,R]}$ follows from additionally noting:
	\begin{itemize}
		\item $\rf$ is a subset of positive rationals. Hence, $0\leq \rf$.
		\item By $\calF$ being $R$-good, we know $q\in\rf$ for all rationals $q>R$. Hence, $\rf\leq R$.
	\end{itemize} 
\end{proof}

\begin{construction}\label{cons:SEMINORMTOFILTER} Suppose we have a bounded $K$-seminorm $|\cdot|_x$ on $\Alin$. We then define the following collection of formal balls:
	\[\F_{x}:=\{B_{q}(k) \, | \, k\in K_R\, \text{and}
	\, |T-k|_x<q\}\]
\end{construction}

\begin{claim}\label{claim:GETrgoodFILTER} $\F_x$ is an $R$-good filter.
\end{claim}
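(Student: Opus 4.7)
The plan is to verify in turn each of the four defining conditions of an $R$-good filter (inhabitedness will drop out of the third condition). Before starting, I would record the key ultrametric estimate that powers the whole argument:
\[
|T - k|_x \;\leq\; \max\bigl\{\,|T - k'|_x,\, |k - k'|\,\bigr\},
\]
obtained by writing $T - k = (T - k') + (k' - k)$ inside $\Alin$, applying the ultrametric axiom, and then using ``preserves constants'' to replace $|k' - k|_x$ by the right Dedekind section of $|k - k'|$. Throughout, I would keep in mind that for an upper real $x$, the notation $x < q$ literally means $q \in x$ as a subset of $\Q$, and that $\max$ of upper reals corresponds to intersection of the underlying subsets.

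\emph{Upward closure.} Suppose $B_{q'}(k') \in \F_x$ and $B_{q'}(k') \subseteq B_q(k)$; unpacking, this means $|T-k'|_x < q'$, $|k-k'| < q$, and $q' \leq q$. The key estimate gives $|T-k|_x \leq \max\{|T-k'|_x, |k-k'|\}$. Now $q \in |T-k'|_x$ by upward closure of the upper real $|T-k'|_x$ (as $q' \leq q$ and $q' \in |T-k'|_x$), and $q$ lies in the right Dedekind section of $|k-k'|$ since $|k-k'| < q$; intersecting the two sets shows $q \in |T-k|_x$, i.e.\ $|T-k|_x < q$, so $B_q(k) \in \F_x$.

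\emph{Closure under pairwise intersections.} Given $B_q(k), B_{q'}(k') \in \F_x$, use decidability of $\leq$ on $\Q$ to split into the cases $q \leq q'$ or $q' < q$. In the former, I would take $j = k$ and $r = q$; the inclusion $B_q(k) \subseteq B_q(k)$ is trivial, and to verify $B_q(k) \subseteq B_{q'}(k')$ one needs $|k-k'| < q'$. Applying the key estimate with $k$ and $k'$ interchanged yields $|k-k'|_x \leq \max\{|T-k|_x, |T-k'|_x\}$, and both of these upper reals contain $q'$ (using $|T-k|_x < q \leq q'$ and $|T-k'|_x < q'$), so $q' \in |k-k'|_x$, i.e.\ $|k-k'| < q'$. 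The case $q' < q$ is symmetric with $j = k', r = q'$.

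\emph{The condition $q > R \Rightarrow B_q(k) \in \F_x$, and inhabitedness.} By boundedness of $|\cdot|_x$, we have $|T - k|_x \leq \|T - k\|$, which is the right Dedekind section of $\max(|1|R, |k|) = R$, using $k \in K_R$. So any rational $q > R$ lies in this Dedekind section, giving $|T-k|_x < q$ and hence $B_q(k) \in \F_x$. Taking $k = 0$ and any rational $q > R$ shows $\F_x$ is inhabited.

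\emph{Roundedness.} If $B_q(k) \in \F_x$, then $q$ lies in the upper real $|T-k|_x$; its roundedness axiom produces $q_0 \in \Q$ with $q_0 < q$ and $|T-k|_x < q_0$. Since $|T-k|_x \in \overleftarrow{[0,\infty)}$ takes only non-negative ``values'', every rational in its upper cut is strictly positive, so $q_0 \in Q_+$ and $B_{q_0}(k) \in \F_x$ with $q_0 < q$, as required.

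The main obstacle I anticipate is the bookkeeping between the ultrametric axiom (phrased in upper reals) and the numerical inequalities on $|k - k'|$ that the formal-ball order asks for. Also, the intersection step implicitly relies on $|-f|_x = |f|_x$ (so that $k - k' = (T-k') + (k - T)$ can be handled symmetrically); this is immediate if one reads semi-multiplicativity in its natural extension $|cf|_x = |c|\cdot|f|_x$ for $c \in K$, $f \in \Alin$, which is automatic for restrictions of multiplicative seminorms. Once those translations are in hand, each step reduces to checking membership and decidable inequalities on rationals.
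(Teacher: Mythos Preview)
Your proof is correct and essentially identical to the paper's: both verify the four conditions via the same ultrametric estimate $|T-k|_x \leq \max\{|T-k'|_x,\,|k-k'|\}$, with the paper phrasing the pairwise-intersection step as ``$\F_x$ is totally ordered by $\subseteq$'' (exactly your case-split on $q\leq q'$ versus $q'<q$) and bounding $|T-k|_x$ by $\max\{|T|_x,|k|\}\leq R$ rather than by $\|T-k\|$ directly. Your explicit flag about needing $|{-f}|_x=|f|_x$ (equivalently, the extended semi-multiplicativity $|cf|_x=|c|\,|f|_x$) is more careful than the paper, which uses that identity silently in the same place.
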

\begin{proof} By Definition~\ref{def:filtersformalballs}, we need to check that $\F_x$ is \dots 
	\begin{itemize}
		\item \emph{\dots Upward closed.} Suppose $B_{q'}(k')\subseteq B_{q}(k)$ and $B_{q'}(k')\in \F_x$. Unpacking definitions, this means $|k-k'|<q$ and $q'\leq q$, as well as $|T-k'|_x<q'$. But since
		\[|T-k|_x = |(T-k') + (k'-k)|_x \leq  \max\{|T-k'|_x,|k-k'|\}<\max\{q',q\}=q,\]
		this implies $B_{q}(k)\in \F_x$.\newline 
		
		\item \emph{\dots Closed under Pairwise Intersection.}  We first claim $\F_x$ is totally ordered by $\subseteq$. Why? Given any $B_{q}(k),B_{q'}(k')\in \F_x$, we get $|T-k|_x<q$ and $|T-k'|_x<q'$, and so
		\[|k-k'|=|(T-k')-(T-k)|_x \leq  \max\{|T-k'|_x,|T-k|_x\}<\max\{q',q\}.\]
		By decidability of $<$ on $\qp$, this means either $B_{q}(k)\subseteq B_{q'}(k')$ or $B_{q'}(k')\subseteq B_{q}(k)$, as claimed. The fact that $\F_x$ is closed under pairwise intersection follows immediately. \newline 
		%	Decidability of $<$ on $\qp$ means that either $q\leq q'$ or $q\geq q'$, which means that either $B_{q}(k)\subseteq B_{q'}(k')$ or $B_{q'}(k')\subseteq B_{q}(k)$, as claimed. The fact that $\F_x$ is closed under finite intersection follows immediately. 
		%Notice the technical requirement in the case when $k,k'$ are non-zero is also immediately satisfied. 

		\item \emph{\dots $R$-good.} 
		\begin{itemize}
			\item Suppose $B_q(k)\in\F_x$, and so $|T-k|_x<q$ by definition. Since $|\cdot|_x$ defines an upper real, there exists $q'\in\qp$ such that $|T-k|_x<q'<q$, and so $B_{q'}(k)\in \F_x$.
			\item Suppose $k\in K_R$ and $q\in\qp$ such that $R<q$. This gives
			\[|T-k|_x\leq \max\{|T|_x,|k|\}\leq \max\{||T||,|k|\}= R<q,\]
			since $k\in K_R$ implies $|k|\leq R$ by definition, and $|T|_x\leq ||T||=R$. Hence, $B_q(k)\in\F_x$.\newline 
		\end{itemize}

		\item \emph{\dots Inhabited.} Immediate from $R$-goodness.
	\end{itemize}
\end{proof}

In the converse direction, we define the following:

\begin{construction}\label{cons:FILTERTOSEMINORM} For any formal non-Archimedean ball $B_{q}(k)$, we define $|\cdot|_{B_{q}(k)}$ as follows:
	\[|T-a|_{B_{q}(k)}:=\max\{|k-a|, q\},\qquad \qquad \text{where $T-a\in\Alin$}.\]
	More generally, given an $R$-good filter $\F$, we define $|\cdot|_\F$ as
	\[|T-a|_{\F}:=\underset{B_{q}(k)\in \F}{\inf} |T-a|_{B_q(k)}=\underset{B_{q}(k)\in \F}{\inf}\max\{|k-a|,q\} \]	
	for any linear polynomial $T-a\in\Alin$.
\end{construction}

\begin{remark}\label{rem:AVOIDsups} Let $f\in\A$ such that $f$ converges on a rigid disc $D_r(k)$.  By the Maximum Modulus Principle in Non-Archimedean Analysis (see e.g. \cite[p. 3]{BakerRumeley}), one can express $|\cdot|_{D_{r}(k)}$ as 
		\begin{equation}\label{eq:discSEMINORM}
	|f|_{D_{r}(k)}=\sup_i|c_i|r^i\qquad\qquad \text{where $f=\sum_{i=0}^{\infty}c_i(T-k)^i$}.
	\end{equation}
Notice that $|T-a|_{D_q(k)}$ is classically equivalent to $|T-a|_{B_q(k)}$ just in case $r=q$. Nonetheless, the definition of $|\cdot|_{D_(r)}(k)$ as stated is problematic in our setting for two reasons.
	\begin{enumerate}[label=(\alph*)]
		\item First, the use of $\sup$ presents a constructive issue. A $\sup$ of Dedekinds yields a \emph{lower real} (see Definition~\ref{def:upperreal}), whereas our $K$-seminorms are valued in upper reals, and there is no constructive way to switch between lower and upper reals.\footnote{Why? See Discussion~\ref{dis:classASSBerk}. A possible objection: since $f\in\A$, $\sup_i$ is really a $\max_i$ because $|c_i|r_i\to 0$. Now since we can take finite $\max$'s of upper reals, does this not circumvent the constructivist issue? The answer is no: we do not know \emph{a priori} on which index $i$ does $\sup_i|c_i|r^i$ attains the maximum value. As such, we are still forced to quantify over all indices.
		} On the other hand, $\max$ and $\inf$ \textit{are} well-defined on the upper reals, which explains the formulation of Construction~\ref{cons:FILTERTOSEMINORM}. 
		\item The rigid discs featured in Equation~\eqref{eq:discSEMINORM} are required to have bounded radius $0<r\leq R$, which ensures the convergence of $|f|_{D_{r}(k)}$ for any $f\in\A$. On the other hand, the radius of the formal balls $B_{q}(k)$ have no upper bound. Nonetheless, we avoid convergence issues since $|\cdot|_{B_q(k)}$ is restricted to just the linear polynomials. 
	\end{enumerate}
	%\footnote{Why? Note that $T-a=T-k+k-a$}
	% (e.g. such that $(L,U)$ define a Dedekind real) 
	%%% But also, are the convergence issues? If q is bigger than R, this thing might not converge... 
	%%% Does Max Mod Principle work for trivially-valued norms? Yes, trivially. Everything is either 0 or 1.
\end{remark}

Having established Construction~\ref{cons:FILTERTOSEMINORM}, we perform the obligatory check:

\begin{claim}\label{claim:GETk-SEMINORM} $|\cdot|_{\F}$ determines a bounded $K$-seminorm on $\Alin$. More explicitly, for any $B_{q}(k)\in\F$, define
		\begin{align}\label{eq:EXTENDkseminorm}
	|\cdot|_{B_q(k)}\colon \Alin&\longrightarrow \mathbb{R}_{\geq 0}\\\nonumber
	aT-b&\longmapsto \max\{|ak-b|,|a|\cdot q\}.
	\end{align}
Then, the map 	
		\begin{equation}\label{eq:FILTERdefineSEMINORM}
	|aT-b|_\F:=\underset{B_q(k)\in \F}{\inf} |aT-b|_{B_q(k)}.
	\end{equation}
defines a bounded $K$-seminorm on $\Alin$.
\end{claim}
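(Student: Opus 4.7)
The plan is to verify the three axioms of Definition~\ref{def:kseminorm} together with boundedness, handled in turn. First I would observe that $|aT-b|_\F$ actually takes values in $\overleftarrow{[0,\infty)}$: each $|aT-b|_{B_q(k)}$ is the right Dedekind section of the non-negative real $\max\{|ak-b|,|a|q\}$ and hence an upper real, and by Fact~\ref{fact:UpperDed}(ii) a set-indexed infimum of upper reals is again an upper real.

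Preservation of constants is immediate from $|{-b}|_{B_q(k)} = |b|$ being constant in $B_q(k)\in\F$. Semi-multiplicativity $|aT|_\F=|a|\cdot|T|_\F$ reduces to factoring $|a|\cdot\max\{|k|,q\}=\max\{|ak|,|a|q\}$, together with the fact that scaling by the non-negative real $|a|$ commutes with $\max$ and with set-indexed $\inf$ of upper reals.

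The ultrametric inequality is where the filter structure does its work, and this will be the main obstacle. Writing $f=aT-b$ and $f'=a'T-b'$, a direct application of the non-Archimedean triangle inequality on $K$ yields the pointwise bound $|f+f'|_{B_q(k)}\leq \max\{|f|_{B_q(k)},|f'|_{B_q(k)}\}$ for every $B_q(k)\in\F$. Taking the infimum over $\F$ then reduces everything to showing
\[
\inf_{B\in\F}\max\{|f|_B,|f'|_B\}\;\leq\;\max\Bigl\{\inf_{B\in\F}|f|_B,\,\inf_{B\in\F}|f'|_B\Bigr\}.
\]
In the subset presentation of upper reals (where $\leq$ is reverse inclusion), a rational $r$ lies in the right-hand side iff there exist $B,B'\in\F$ with $r>|f|_B$ and $r>|f'|_{B'}$. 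Closure of $\F$ under pairwise intersections then provides $B''\in\F$ with $B''\subseteq B$ and $B''\subseteq B'$; a short calculation using $|j-k|<q$ and the ultrametric inequality on $K$ shows that refinement is monotone for $|\cdot|_{(-)}$, i.e.\ $B''\subseteq B$ implies $|f|_{B''}\leq |f|_B$, and similarly for $f'$. Hence $r>|f|_{B''}$ and $r>|f'|_{B''}$, so $r$ lies in $\max\{|f|_{B''},|f'|_{B''}\}$ and therefore in the left-hand side.

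Finally, boundedness follows from $R$-goodness of $\F$: for every rational $q>R$ the ball $B_q(0)\in\F$, giving $|aT-b|_\F\leq|aT-b|_{B_q(0)}=\max\{|b|,|a|q\}$. Taking the infimum over all $q>R$ --- equivalently, unioning the corresponding sets of rationals --- yields the right Dedekind section of $\max\{|b|,|a|R\}=||aT-b||$, so $|aT-b|_\F\leq||aT-b||$ as required.
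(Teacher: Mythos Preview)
Your proof is correct and follows essentially the same strategy as the paper: verify each axiom of Definition~\ref{def:kseminorm} at the level of the individual $|\cdot|_{B_q(k)}$, then pass to the infimum. For boundedness you specialise to $B_q(0)$ while the paper bounds via $\max\{|a|,|k|,q\}$ and $k\in K_R$, but these are minor variations.

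There is one point on which you are in fact more careful than the paper. The paper derives the pointwise inequality $|f+f'|_{B_q(k)}\leq\max\{|f|_{B_q(k)},|f'|_{B_q(k)}\}$ and then simply says ``taking the infimum on both sides'' yields the ultrametric inequality for $|\cdot|_\F$. But $\inf_B\max\{x_B,y_B\}\geq\max\{\inf_B x_B,\inf_B y_B\}$ in general, not $\leq$, so that step is not automatic. You correctly supply the missing ingredient: closure of $\F$ under pairwise intersection, together with the monotonicity $B''\subseteq B\Rightarrow |f|_{B''}\leq |f|_B$, makes the family $\{|f|_B\}_{B\in\F}$ downward-directed, which is exactly what is needed to exchange $\inf$ and $\max$ in the required direction. (One can alternatively observe, as in the proof of Claim~\ref{claim:GETrgoodFILTER}, that any filter of formal non-Archimedean balls is automatically totally ordered by $\subseteq$, which gives the same directedness.) So your argument is not a different route, but a more honest version of the same one.
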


\begin{remark} It is easy to see that Equation~\eqref{eq:EXTENDkseminorm} recovers the original $|\cdot|_{B_q(k)}$ in Construction~\ref{cons:FILTERTOSEMINORM} when we restrict to $T-b$. In fact, if we work classically, then this extension of $|\cdot|_{B_q(k)}$ is canonical in that it is essentially forced upon us by Definition~\ref{def:kseminorm}.\footnote{Why? Classically, for any field $K$, it is decidable if $a\in K$ is a unit or $a=0$. (Geometrically, however, this is not true -- one has to impose this property as an extra axiom, see \cite{JohnstoneSpectra}.) Nonetheless, suppose $|\cdot|_x$ is a $K$-seminorm such that $|T-c|_x=$ ``the right Dedekind section of $|T-c|_{B_q(k)}$'', for any $c\in K$.  
If $a$ is a unit, then semi-multiplicativity then gives $|aT-b|_x=|a\cdot(T-b\cdot a^{-1})|_x=|a|\cdot |T-b\cdot a^{-1}|_{B_q(k)} = \max\{|ak-b|,|a|\cdot q\}$, whereas if $a=0$, then $|aT-b|_x=|b| = \max\{|ak-b|,|a|\cdot q\}$,	coinciding with (the right Dedekind section of) the extended $|\cdot|_{B_q(k)}$.}

\end{remark}

\begin{proof}[Proof of Claim] Verifying the claim involves checking that Equation~\eqref{eq:FILTERdefineSEMINORM} satisfies the required properties.
	\begin{itemize}
		\item $|\cdot|_\F$ is valued in the upper reals, since $|f|_\F$ takes the infimum of an arbitrary set of Dedekinds.
		\item To verify $|\cdot|_\F$ satsifies the properties listed in Definition~\ref{def:kseminorm}(ii), one first verifies their obvious analogues for $|\cdot|_{B_q(k)}$, before observing that they are preserved by taking $\inf$'s. 	
	Most are obvious except perhaps the ultrametric inequality. Suppose we have $aT-b,a'T-b'\in\Alin$. Then, given any $B_{q}(k)\in\F$, compute:
		\begin{align}\label{eq:BASICultrametricKSEMINORM}
		|aT - b + a'T - b'|_{B_q(k)} &= \max\{|(a+a')k-(b+b')|\,,\, |a+a'|\cdot q\} \nonumber \\
		&\leq \max\left\{ \max\{|ak-b|,|a'k-b'|\},\max\{|a|\cdot q,|a'|\cdot q\}\right\} \nonumber\\
		&= \max\{|aT-b|_{B_{q}(k)},|a'T-b'|_{B_q(k)}\}
		\end{align} 
		where the middle inequality is by the ultrametric inequality satisfied by the original norm $|\cdot|$ on $K$. Since this inequality holds for all $B_{q}(k)\in\F$, taking the infimum on both sides of the Inequality~\eqref{eq:BASICultrametricKSEMINORM} over all $B_{q}(k)\in\F$ gives the desired ultrametric inequality for $|\cdot|_\F$
		\begin{align}\label{eq:ultrametricKSEMINORM}
		|aT - b + a'T - b'|_{\F} \leq  \max\{|aT-b|_\F\,,\, |a'T-b'|_\F\}.
		\end{align}
		\item For any $R$-good filter $\F$,
		\begin{equation}
      |T-a|_\F=\underset{B_{q}(k)\in \F}{\inf}\max\{|k-a|,q\}\leq  \underset{B_{q}(k)\in \F}{\inf}\max\{|a|, |k|, q\}\leq \max\{|a|,R\},
		\end{equation}	
 where the final inequality is by $R$-goodness of $\F$ plus the fact that $k\in K_R$. The same argument extends to show that $|aT-b|_\F\leq \max\{|a|\cdot R,|b|\}$.  Hence, conclude that $|\cdot|_\F$ is bounded by the Gauss norm $||\cdot||$.\footnote{Technically, $\max\{|a|,R\}$ defines a Dedekind and not an upper real, so we really mean its corresponding right Dedekind section.}
	\end{itemize}
\end{proof}

As the reader may have anticipated, the algebraic constructions (i.e. bounded multiplicative seminorms and $K$-seminorms) and the topological constructions (i.e. the $R$-good filters) defined in this section have a close interaction. This is made precise in the following two theorems. %For clarity, we remind the reader that both theorems continue to assume Hypothesis~\ref{hyp:BerkovichCLASSIFICATION}. 

\begin{theorem}\label{thm:Alin=rgoodFILTER} The space of bounded $K$-seminorms on $\Alin$ is equivalent to the space of $R$-good filters.
\end{theorem}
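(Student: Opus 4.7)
The plan is to show that the assignments $|\cdot|_x \mapsto \F_x$ from Construction~\ref{cons:SEMINORMTOFILTER} and $\F \mapsto |\cdot|_\F$ from Construction~\ref{cons:FILTERTOSEMINORM} are mutually inverse. Well-definedness of both maps was already established in Claims~\ref{claim:GETrgoodFILTER} and~\ref{claim:GETk-SEMINORM}, so it suffices to verify that the two composites are each the identity.

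For the first direction, I would show $|\cdot|_{\F_x} = |\cdot|_x$ for any bounded $K$-seminorm $|\cdot|_x$. Since both sides are bounded $K$-seminorms on $\Alin$, Preparation Lemma~\ref{lem:PREPARATORY} reduces the check to the values on $T-a$ for $a \in K_R$. Unpacking, $|T-a|_{\F_x} = \inf_{B_q(k)\in\F_x} \max\{|k-a|,q\}$, and I would check the two inequalities of upper reals separately. For $|T-a|_x \leq |T-a|_{\F_x}$, the ultrametric inequality gives $|T-a|_x \leq \max\{|T-k|_x, |k-a|\} \leq \max\{q,|k-a|\}$ for every $B_q(k)\in\F_x$ (using that $|T-k|_x<q$ by construction), and then pass to the infimum. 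For the reverse inequality, given any rational $r > |T-a|_x$, use roundedness of $|T-a|_x$ to choose a rational $q$ with $|T-a|_x < q < r$; then $B_q(a) \in \F_x$ (since $a \in K_R$), and $\max\{|a-a|,q\} = q < r$, witnessing $|T-a|_{\F_x} < r$.

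For the second direction, I would show $\F_{|\cdot|_\F} = \F$ for any $R$-good filter $\F$, where $\F_{|\cdot|_\F} = \{B_q(k) : k \in K_R,\, |T-k|_\F < q\}$. For $\F \subseteq \F_{|\cdot|_\F}$: given $B_q(k) \in \F$, apply the roundedness clause of $R$-goodness to obtain some $B_{q'}(k') \in \F$ with $q'<q$, then intersect with $B_q(k)$ (closure under pairwise intersection) to produce $B_r(j) \in \F$ with $|k-j|<q$ and $r \leq q' < q$, so $\max\{|k-j|,r\} < q$, forcing $|T-k|_\F < q$. For the reverse inclusion, unwind the definition of $\inf$ for upper reals: $|T-k|_\F < q$ means there exists $B_{q'}(k') \in \F$ with $\max\{|k'-k|,q'\} < q$, i.e.\ $B_{q'}(k') \subseteq B_q(k)$; upward closure of $\F$ then gives $B_q(k) \in \F$.

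The main subtlety, and where I would spend most of my care, is handling the fact that $|\cdot|_x$ is valued in upper reals rather than Dedekinds: inequalities like $|T-a|_x \leq |T-a|_{\F_x}$ must be read in the Scott-topological sense via Convention~\ref{conv:specorder}, and the critical step ``pass from $|T-k|_\F < q$ to an explicit $B_{q'}(k')$'' hinges on the fact that an inf of upper reals is concretely computed as the union of their corresponding rounded up-sets in $\Q$ (Fact~\ref{fact:UpperDed}(ii)). Once this point-free bookkeeping is in hand, both directions are essentially forced by the ultrametric inequality together with the two $R$-goodness axioms (the first secures that $\F_x$ contains enough balls, the second ensures roundedness of $|T-k|_\F$).
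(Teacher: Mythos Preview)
Your proposal is correct and follows essentially the same approach as the paper's own proof: both directions are established via the same ultrametric estimates and the $R$-goodness/roundedness axioms, with only cosmetic differences (e.g.\ you bound $|T-a|_x$ by each $|T-a|_{B_q(k)}$ and pass to the infimum, whereas the paper unpacks the infimum directly; and for $\F\subseteq\F_{|\cdot|_\F}$ you explicitly take a pairwise intersection where the paper invokes Observation~\ref{obs:BALLsequents}(i) to assume $k=k'$ without loss of generality). Your closing remarks on the upper-real bookkeeping are well placed and match the paper's emphasis.
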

\begin{proof} It suffices to show that Constructions~\ref{cons:SEMINORMTOFILTER} and \ref{cons:FILTERTOSEMINORM} are inverse to each other. This amounts to checking: 
	
	\subsubsection*{First Direction: $|\cdot|_x=|\cdot|_{\F_{x}}$} Fix a bounded  $K$-seminorm $|\cdot|_x$. By Preparation Lemma~\ref{lem:PREPARATORY}, it suffices to check that $|\cdot|$ and $|\cdot|_{\F_x}$ agree on linear polynomials $T-a$ such that $a\in K_R$. 
	
	Suppose $|T-a|_x<q$ for some $q\in\qp$. Then, $B_{q}(a)\in \F_x$ by construction. In particular, there exists 
	$q'\in Q_+$ such that
	\begin{equation}\label{eq:BqrNORM}
	|T-a|_{x}<q'<q
	\end{equation}
	since $|T-a|_x$ defines an upper real. Further, since
	\begin{equation}\label{eq:BqrNORM2}
	|T-a|_{B_{q'}(a)}=\max\{|a-a|,q'\}=q',
	\end{equation}
	and since Equation~\eqref{eq:BqrNORM} implies $B_{q'}(a)\in\F_x$, deduce that
	\begin{equation*}
	|T-a|_{\F_{x}}=\underset{B_{q}(k)\in \F_x}{\inf} |T-a|_{B_q(k)}\leq |T-a|_{B_{q'}(a)}<q,
	\end{equation*}
	and so 
	\begin{equation}\label{eq:Fxsmallerthanx}
	|T-a|_{\F_{x}}\leq |T-a|_x.
	\end{equation}
	
	Conversely, suppose $|T-a|_{\F_x}<q$ for some $q\in\qp$. Again, since $|T-a|_{\F_x}$ defines an upper real, deduce there exists $B_{q'}(k)\in\F_x$ such that 
	\[|T-a|_{\F_x}\leq |T-a|_{B_{q'}(k)}=\max\{|k-a|,q'\}<q.\]
	By definition, $B_{q'}(k)\in\F_{x}$ implies $|T-k|_x<q'$, and so 
	\[|T-a|_x=|(T-k)+(k-a)|_x\leq\max\{|T-k|_x,|k-a|\}<q,\]
	which in turn implies
	\begin{equation}\label{eq:xsmallerthanFx}
	|T-a|_x\leq |T-a|_{\F_{x}}.
	\end{equation}
	Put together, Equations~\eqref{eq:Fxsmallerthanx} and \eqref{eq:xsmallerthanFx} give $|\cdot|_x=|\cdot|_{\F_x}$, as claimed.\newline 
	
	\subsubsection*{Second Direction: $\mathcal{F}=\mathcal{F}_{|\cdot|_{\mathcal{F}}}$} Fix an $R$-good filter $\F$. Suppose $B_{q}(k)\in\F$. Since the radius $\rf$ of $\F$ defines an upper real, find a $q'\in\qp$ such that $B_{q'}(k')\in\F$ and $\rf<q'<q$. Without loss of generality, we may assume $k=k'$.\footnote{Why? Since $\F$ is closed under pairwise intersection, there exists $B_r(j)\subseteq B_{q}(k)\cap B_{q'}(k')$, and so by Observation~\ref{obs:BALLsequents}(i) $B_{q'}(k')=B_{q'}(j)\subseteq  B_q(j)=B_q(k)$.} Since $|T-k|_{B_{q'}(k)}=q'$, deduce that
	\[|T-k|_{\F}\leq |T-k|_{B_{q'}(k)}=q'<q.\]
	In particular, this implies $B_{q}(k)\in \F_{|\cdot|_\F}$, and so 
	\begin{equation}\label{eq:FinsideFNORM}
	\F\subset \F_{|\cdot|_\F}.
	\end{equation}
	
		Conversely, suppose $B_{q}(k)\in\F_{|\cdot|_\F}$. Unpacking definitions, deduce there exists $B_{q'}(j)\in\F$ such that 
	\begin{equation}\label{eq:beforeCASESPLIT}
	|T-k|_\F\leq |T-k|_{B_{q'}(j)}=\max\{|k-j|,q'\}<q.
	\end{equation}
	To show that $B_{q'}(j)\subseteq B_{q}(k)$, we need to show that $|k-j|<q$ and $q'\leq q$. But this is clear from Equation~\eqref{eq:beforeCASESPLIT}. 
	%But this is clear since Equation~\eqref{eq:beforeCASESPLIT} implies 
	%\begin{equation}
	%|k-j|\leq\max\{|k-j|,q'\}<q.
	%q'\leq \max\{|k-j|,q'\}<q.
	%\end{equation}	
	Since $B_{q'}(j)\in\F$ and $\F$ is upward closed, conclude that $B_{q}(k)\in\F$, and so
	\begin{equation}\label{eq:FNORMinsideF}
	\F_{|\cdot|_\F}\subset \F.
	\end{equation}
	Combining Equations~\eqref{eq:FinsideFNORM} and \eqref{eq:FNORMinsideF} gives $\F=\F_{|\cdot|_\F}$, finishing the proof.
\end{proof}

\begin{theorem}\label{thm:BerkovichDISC} As our setup, denote:
	\begin{itemize}
		\item $\M(\A)$ as the classical Berkovich spectrum (of Dedekind-valued multiplicative seminorms);
		%	\item $\overleftarrow{\M(\A)}$ as the space of upper-valued multiplicative seminorms on $\A$;
		\item $\overleftarrow{\M(\Alin)}$ as the space of bounded $K$-seminorms on $\Alin$.
	\end{itemize}
	\underline{Then}, 
	\begin{enumerate}[label=(\roman*)]
		\item Each $|\cdot|_x\in \M(\A)$ is uniquely represented by an $R$-good filter.
		\item $\M(\A)$ is \textit{classically} equivalent to $\overleftarrow{\M(\Alin)}$. In particular, it is \textit{classically} equivalent to the space of $R$-good filters.
	\end{enumerate}	
\end{theorem}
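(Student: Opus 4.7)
The plan is to layer Theorem~\ref{thm:Alin=rgoodFILTER} on top of the Preparation Lemma~\ref{lem:PREPARATORY} via a restriction map. Define
\[\rho\colon \M(\A) \longrightarrow \overleftarrow{\M(\Alin)}, \qquad |\cdot|_x \longmapsto |\cdot|_x\big|_{\Alin},\]
where the codomain is reached using the classical embedding $\R \hookrightarrow \overleftarrow{[0,\infty)}$ of Dedekinds into upper reals (Fact~\ref{fact:UpperDed}(i)); that the restriction satisfies the axioms of Definition~\ref{def:kseminorm}(ii) is exactly Remark~\ref{rem:Kseminorms}. Injectivity of $\rho$ is the content of Preparation Lemma~\ref{lem:PREPARATORY}(ii). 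Composing with the bijection of Theorem~\ref{thm:Alin=rgoodFILTER} immediately yields part (i): every $|\cdot|_x \in \M(\A)$ determines a unique $R$-good filter $\F_x = \{B_q(k) \mid |T-k|_x < q,\ k \in K_R\}$.

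The real content of part (ii) is therefore \emph{surjectivity} of $\rho$: every bounded $K$-seminorm on $\Alin$ extends to a bounded multiplicative seminorm on all of $\A$. Equivalently, given an $R$-good filter $\F$, I would construct an extension $|\cdot|^{\A}_{\F} \in \M(\A)$ whose restriction to $\Alin$ recovers the $K$-seminorm $|\cdot|_\F$ of Construction~\ref{cons:FILTERTOSEMINORM}. The natural candidate uses \emph{classical} disc seminorms: for each $B_q(k) \in \F$ with $q \leq R$, expand $f = \sum_i c_i^{(k)} (T-k)^i$ (this converges on the closed disc of radius $q$ around $k$, which sits inside $K_R$) and set
\[|f|^{\A}_{B_q(k)} := \sup_i |c_i^{(k)}|\, q^i.\]
Each such $|\cdot|^{\A}_{B_q(k)}$ is already a bounded multiplicative seminorm on $\A$ -- the standard Gauss-type norm on the disc of radius $q$ around $k$, using essentially that $K$ is algebraically closed. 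One then defines
\[|f|^{\A}_{\F} := \inf_{B_q(k) \in \F,\, q \leq R} |f|^{\A}_{B_q(k)}.\]

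The crux of the proof -- and the main obstacle -- is verifying that $|\cdot|^{\A}_{\F}$ is itself a bounded \emph{multiplicative} seminorm with \emph{Dedekind} values. Multiplicativity is the subtle point: given $f, g \in \A$ and $\epsilon > 0$, I would use nestedness of $\F$ under $\subseteq$ (already visible in the proof of Claim~\ref{claim:GETrgoodFILTER}, together with closure under pairwise intersection) to select a single small enough ball $B_q(k) \in \F$ along a cofinal chain that simultaneously witnesses $|f|^{\A}_{B_q(k)} < |f|^{\A}_{\F} + \epsilon$ and $|g|^{\A}_{B_q(k)} < |g|^{\A}_{\F} + \epsilon$, then invoke multiplicativity of $|\cdot|^{\A}_{B_q(k)}$ and let $\epsilon \to 0$. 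The ultrametric inequality and boundedness against the Gauss norm transfer from the disc seminorms in the same fashion. The values being Dedekind (and not merely upper real) is precisely where LEM enters: the monotone infimum of a bounded-below family of Dedekinds is only an upper real constructively, but becomes a Dedekind classically -- this is the source of the ``classical'' qualifier in part (ii). To close the loop, I would verify that restricting $|\cdot|^{\A}_{\F}$ back to $\Alin$ recovers $|\cdot|_{\F}$ by the direct computation
\[|T-a|^{\A}_{B_q(k)} = \sup\{|k-a|,\, q\} = \max\{|k-a|,\, q\},\]
matching Construction~\ref{cons:FILTERTOSEMINORM} under the inf over $\F$.
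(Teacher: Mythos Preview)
Your treatment of (i) matches the paper's exactly. For (ii), you take a genuinely different route: the paper extends $|\cdot|_\F$ first to $K[T]$ via the factorisation $f = c\prod_j(T-b_j)$ (this is where algebraic closure of $K$ enters), and then passes from $K[T]$ to $\A$ by density and Cauchy limits (Appendix~\ref{Appendix:BerkDisc}). You instead define the disc seminorms $|f|^{\A}_{B_q(k)} = \sup_i |c_i^{(k)}| q^i$ directly on all of $\A$ and take the infimum over balls in $\F$.

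There is a genuine gap at the boundary. When the $R$-good filter $\F$ has radius $\rf = R$ --- which occurs precisely for the Gauss point, since $||T-k|| = R$ for every $k \in K_R$ --- the filter contains only balls $B_q(k)$ with $q > R$ strictly. Your infimum over $\{B_q(k) \in \F : q \le R\}$ is then over the empty set, yielding $+\infty$. Dropping the restriction $q \le R$ does not help: for $q > R$ the quantity $\sup_i |c_i^{(k)}| q^i$ can be infinite even for $f \in \A$ (take $|c_i| = R^{-i}/i$, so $|c_i| R^i = 1/i \to 0$ but $|c_i| q^i = (q/R)^i/i \to \infty$), so the infimum remains $+\infty$ rather than $||f||$. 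The paper's detour through $K[T]$ sidesteps this entirely, since on polynomials $|f|_{B_q(k)} = \max_i |c_i| q^i$ is finite for \emph{every} rational $q$; the passage to $\A$ happens by density only after the infimum is already a bona fide seminorm on $K[T]$. Your argument would need either a separate treatment of the Gauss point or a reorganisation along those lines.

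A minor aside: multiplicativity of the disc Gauss norm $\sup_i |c_i| q^i$ does not require $K$ to be algebraically closed; that hypothesis is used only in the factorisation step (which your approach, as written, does not invoke).
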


\begin{proof} It is clear $|\cdot|_x\in\M(\A)$ restricts to a bounded $K$-seminorm, which we denote
	\begin{equation}\label{eq:KsemiRESTRICT}
	|\cdot|_{x}\big|_{\Alin}\colon \Alin\longrightarrow \overleftarrow{[0,\infty)}.
	\end{equation}
By Fact~\ref{fact:UpperDed}, the map $x\mapsto R_x$ sending a Dedekind to its right Dedekind section is monic. As such,
	(i) follows immediately from Preparation Lemma~\ref{lem:PREPARATORY} and Theorem~\ref{thm:Alin=rgoodFILTER}.
	
	For (ii), we start by declaring the following classical assumption, which we will use freely in the remainder of the proof:
	\begin{itemize}
		\item[$(\star)$] Any upper real $\gamma_U$ besides $\infty$ or $-\infty$ can be canonically associated to a Dedekind real $\gamma$ whose right Dedekind section corresponds to $\gamma_U$. Similarly, any bounded lower real can also be canonically associated to its corresponding Dedekind real. 
	\end{itemize}
	The argument proceeds (again) by way of construction. By Equation~\eqref{eq:KsemiRESTRICT}, we already know how to send $|\cdot|_x\in\M(\A)$ to a bounded $K$-seminorm in $\overleftarrow{\M(\Alin)}$. In the converse direction, notice Theorem~\ref{thm:Alin=rgoodFILTER} helpfully allows us to explicitly characterise the generic bounded $K$-seminorm as $|\cdot|_{\F}\in\overleftarrow{\M(\Alin)}$, where $\F$ is the generic $R$-good filter. We extend this to a multiplicative seminorm on $\A$ in two steps. First, since any polynomial $f\in K[T]$ can be represented as 
	\[f=c \cdot \prod^m_{j=1} (T-b_j),\]
	$|\cdot|_\F$ naturally extends to a map on $K[T]$, which (abusing notation) we also represent as:
	\begin{align}\label{eq:polynomialEXT}
	|\cdot|_{\F}\colon K[T] &\longrightarrow [0,\infty) \\
	f &\longmapsto |c|\cdot \prod^m_{j=1} |T-b_j|_{\F}. \nonumber 
	\end{align}
	%A standard check verifies that Equation~\eqref{eq:polynomialEXT} defines a bounded multiplicative seminorm on $K[T]$. 
	Second, since any power series $f\in \A$ can be represented as limit of polynomials
	\begin{equation*}\label{eq:REPpowerseries}
	f=\sum_{i=0}^{\infty}a_iT^i = \lim_{n\to\infty}\sum^{n}_{i=0}a_i T^i,
	\end{equation*}
	we define the obvious extension of $|\cdot|_\F$ to $\A$:
	\begin{align}\label{eq:powerseriesEXT}
	\widetilde{|\cdot|_{\F}}\colon \A&\longrightarrow [0,\infty) \\
	f &\longmapsto \lim_{n\to \infty} |\sum^{n}_{i=0}a_i T^i|_{\F} \nonumber 
	\end{align}
	
	A few orienting remarks:
	\begin{enumerate}[label=(\alph*)]
		\item Notice the implicit use of Assumption $(\star)$ throughout, e.g. the original definition of $|\cdot|_\F$ was valued in the upper reals, but we extended it to a Dedekind-valued map on $K[T]$ in Equation~\eqref{eq:polynomialEXT}. % Similarly, although the supremum of a set of Dedekinds defines a lower real (cf. Fact~\ref{fact:UpperDed}), we have defined $\widetilde{|\cdot|_\F}$ in Equation~\eqref{eq:powerseriesEXT} as always being Dedekind-valued. 
		\item Assumption $(\star)$ also strengthens item (i): we can now view $\M(\A)$ as a subspace of $\overleftarrow{\M(\Alin)}$. %\footnote{To test our understanding: why couldn't we claim this when just working constructively? Recall from Fact~\ref{fact:UpperDed} that $x\mapsto R_x$ only defines a monic map, \emph{not} an embedding.}
		\item Suppose the construction $\widetilde{|\cdot|_{\F}}$ defines a bounded multiplicative seminorm on $\A$. Since Assumption $(\star)$ allows us to treat $K$-seminorms as if they were valued in Dedekinds, it becomes straightforward to check that the constructions $|\cdot|_x\big|_{\Alin}$ and $|\cdot|_{\F}$ define maps which are inverse to each other. In particular, the identity 	\begin{equation}
		|\cdot|_\F = \widetilde{|\cdot|_\F}\big|_{\Alin}
		\end{equation}
		is immediate by construction, whereas the identity  
		\begin{equation}
		|\cdot|_x= \widetilde{|\cdot|_x\big|_{\Alin}}
		\end{equation}
		follows from Preparation Lemma~\ref{lem:PREPARATORY} and checking the values on the linear polynomials. % Put together, this would give the claimed (classical) equivalence. %%% now you have to check the topology, no?
	\end{enumerate}
	As such, in order to prove the (classical) equivalence stated in the Theorem, it remains to verify that $\widetilde{|\cdot|_{\F}}$ is in fact a bounded multiplicative seminorm on $\A$. The check relies on standard arguments from non-Archimedean analysis; for details, see Appendix~\ref{Appendix:BerkDisc}. 
\end{proof}

We conclude with some discussions on various aspects of the proof.

%%% \ref{rem:AVOIDsups}
\begin{discussion}\label{dis:PROOFofBerk} Let us sketch the original argument from classical Berkovich geometry.
	\begin{itemize}
		\item Given any rigid disc 
		$D_{r}(k)$ such that $0<r\leq R$, define a multiplicative seminorm $ |\cdot|_{D_{r}(k)}$ on $\A$. 
		\item Next, given any nested sequence of discs $\mathsf{D}:= D_{r_1}(k_1)\supseteq D_{r_2}(k_2)\supseteq \dots$, define the multiplicative seminorm $|\cdot|_{\mathsf{D}}:=\inf_{\mathsf{D}} |\cdot|_{D_{r_i}(k_i)}$. 
		\item Finally, given $|\cdot|_x\in\M(\A)$, define the nested sequence $\mathsf{D}_{x}:=\{D_{|T-k|_x}(k)\,|\, k\in K \, \text{and}\, |k|\leq R\}$. Check that $|\cdot|_x$ and $|\cdot|_{\mathsf{D}_x}$ agree on linear polynomials, and conclude $|\cdot|_x=|\cdot|_{\mathsf{D}_x}$. 
	\end{itemize}
	The parallels with the proof for Theorem~\ref{thm:Alin=rgoodFILTER} are clear. However, the argument must be adjusted and finitised appropriately in order to work in our context. Some important differences:
	%%% exchanged one finitisation for another. %%% pulling away from the set theory solves the problem of insufficient discs
	\begin{enumerate}[label=(\roman*)]
		\item \emph{On ``rational'' discs.} Both $R$-good filters and the nested sequence of discs $\mathsf{D}_x$ give rise to approximation arguments, but their approximants differ in important ways. In particular, whereas the radius of a formal ball $B_{q}(k)$ is rational in the usual sense that $q\in\qp$ is a positive rational number, in non-Archimedean geometry a rigid disc $D_r(k)\in \mathsf{D}_x$ with rational radius means that
		\[r\in\Gamma:=\{|k|\in [0,\infty) \,|\, k\in K\},\] 
		i.e. $r$ belongs to the value group $\Gamma$ of $K$. 
		The suggestive terminology (``rational'') indicates an analogy between $\qp$ and $\Gamma$, but it is important to remember that they are not the same --- particularly when $K$ is trivially-valued.
		\item \emph{On $K$-seminorms}. Whereas the original argument starts by defining a multiplicative seminorm on $\A$, before restricting it to the linear polynomials to perform certain checks, we instead defined a new algebraic structure (which we called $K$-seminorms) on the space of linear polynomials $\Alin$. 
		\item \emph{On the use of filters.} While Berkovich's original argument shows that every $|\cdot|_x\in\M(\A)$ corresponds to a nested descending sequence of discs, this representation is not unique. In particular, two different sequences of discs may define the same multiplicative seminorm on $\A$. We resolve this issue by appealing to the more natural language of filters, which allows us to obtain a representation result: every $|\cdot|_x\in\M(\A)$ is uniquely associated to an $R$-good filter $\F_x$. % This should be compared with our refinement of Ostrowski's Theorem in \cite{NVOstrowski}.
		\item \emph{On the use of formal balls.} As already pointed out in Discussion~\ref{dis:pointSEThypotheses}, our result holds for both trivially and non-trivially valued $K$. This is in contrast to the original argument, which only works for non-trivially valued $K$. % In particular, notice that $\Gamma=\{0,1\}$ when $K$ is trivially valued, which is clearly insufficient to approximate the real interval.
	\end{enumerate}
	%\footnote{As a sanity check: how were the properties of the $K$-seminorms used in our proof? The ultrametric inequality and preservation of constants were used extensively, especially when it came to splitting up equations of the form $|T-a|_x=|(T-k)+(k-a)|_x$. Semi-multiplicativity was used in Claim~\ref{claim:GETk-SEMINORM} to show that Construction~\ref{cons:FILTERTOSEMINORM} could be canonically extended to all of $\Alin$.}
	Items (i) and (ii) reflect our decision to work with the upper reals as opposed to the Dedekinds, and strike a careful balance: whilst the upper reals are particularly suited to analysing the filters of formal balls (cf. Observation~\ref{obs:FILTERradius}), they also impose strong (constructive) restrictions on the algebra (cf. Remark~\ref{rem:AVOIDsups}, but see also Discussion~\ref{dis:classASSBerk}). Items (iii) and (iv) give evidence that filters (as opposed to nested sequences of discs) and formal balls (as opposed to the classical rigid discs) are the correct language for studying $\M(\A)$.
\end{discussion}

%The reason why these structural arguments show up in our context (and not in Berkovich's original proof classifying the points of $\M(K\{R^{-1}T\})$ for fixed $R$) once again reflects 
%and that analogues of these claims do not appear in the original proof classifying the points of $\M(K\{R^{-1}T\})$ for fixed $R$. Once again, this adjustment reflects the restrictions imposed by the upper reals, as already discussed above.
%%% forces us to work more closely with K[T]

Finally, for the reader interested in constructive mathematics, we sort out and summarise the classical vs. constructive aspects of our result.

\begin{discussion}[On Assumption $(\star)$]\label{dis:classASSBerk} Recall that Theorem~\ref{thm:BerkovichDISC} (ii) is a classical result because the proof relies on Assumption $(\star)$, which essentially says: any bounded upper (or lower) real can be canonically associated to a Dedekind real. Some natural questions (and answers): 
	\begin{enumerate}[label=(\roman*)]
		\item \emph{Why is Assumption $(\star)$ classical?} Consider the obvious argument: given an upper real $R$ defined as the set of rationals strictly greater than 1, we define a lower real $L$ as the set of rationals strictly less than 1. Hence, conclude that $(L,R)$ is the Dedekind real canonically associated to $R$.
		
		This argument will strike most as reasonable, so where does it fail constructively? Discussion~\ref{dis:POVonUPPERS} reminds us that an upper real is blind to the rationals less than itself, suggesting it may only have the same knowledge as a Dedekind real by way of classical reasoning. Reformulated more precisely, we claim that if any upper real $R$ can be associated to a lower real $L$ such that $(L,R)$ defines a Dedekind real, then this implies that every proposition $p$ has a Boolean complement $p'$ --- which holds classically, but \textit{not} constructively (cf. Remark~\ref{rem:filtersPTS}).
		
		To prove our claim, let $p$ be any proposition, and define the subset of rationals:
		\[R:=\{ q\in\Q |\, \text{either ``$q>1$'' or ``$p$ holds and $q>0$''}\}\]
		In other words, $R$ is a kind of schizophrenic upper real: since $p$ holds iff $R<1$, $R$ may define the upper real $1$ or the upper real $0$, depending on the truth value of $p$. Now, suppose we have some lower real $L$ such that $(L,R)$ is Dedekind. Define a new proposition $p'$ iff $ \frac{1}{2}<L$. If $(L,R)$ indeed define a Dedekind, one deduces
		\begin{enumerate}[label=(\alph*)]
			\item $\top\to p\vee p'$;
			
			[Why? Locatedness of $(L,R)$ gives $\frac{1}{2}<L\vee R<1$.]
			\item  $p\land p'\to \bot$.
			
			[Why? Since $p\to R<\frac{1}{2}$, this gives $p\land p'\to (\frac{1}{2}<L)\land (R<\frac{1}{2})$, contradicting separatedness of $(L,R)$.]
		\end{enumerate}
	This shows $p'$ is a Boolean complement of $p$, as claimed. %%% See also Bas Spitters p. 5 Example 3 Locatedness and overt sublocales
		\item \emph{Is Assumption $(\star)$ necessary?} To constructivise Theorem~\ref{thm:BerkovichDISC}, one might look to prove an equivalence between upper-valued multiplicative seminorms on $\A$ and $K$-seminorms on $\Alin$ instead. If such a result does hold, we will need a different proof strategy. This seems challenging: Assumption $(\star)$ allows us to assume that bounded suprema/infima of Dedekinds are still Dedekinds, which plays a key (if implicit) role in our proof (see Claim~\ref{claim:POWERseminorm}). Notice that we 
 will need Assumption $(\star)$ anyway if we are to obtain the final equivalence with the usual Berkovich spectra. 
	\end{enumerate}
\end{discussion}

\begin{summary}[Classical vs. Geometric Mathematics]\label{sum:classicalVSgeometric} 
\emph{Geometric mathematics} is a specific regime of constructive mathematics, natural in point-free topology, which essentially deals with constructions arising from colimits and finite limits (for details, see \cite{Vi4,VickersPtfreePtwise}). Since our work here is influenced by point-free techniques, a natural question to ask is: to what extent do the results of this section adhere to these constraints? 
	\begin{enumerate}[label=(\roman*)]
		\item Hypothesis~\ref{hyp:BerkovichCLASSIFICATION} views $K$ as a set of elements, equipped with structure of a field. Taken at face value, these are unwelcome restrictions. If one wished to be consistently geometric throughout, this means $K$ has to be a discrete field and so many (topological) fields of interest, e.g. the $p$-adic complex numbers $\C_p$, would be excluded. However, if the reader is happy to work with point-set topology, then this is no longer a problem. %(cf. also Remark~\ref{rem:geomfield}).% \footnote{Although we haven't checked, it is reasonable to suspect that there may be constructive workaround so long as we are careful about how we rely on non-geometric properties. In practice, the generic model of a theory may satisfy certain non-geometric sequents which can be used in geometric proofs (so long as the final result can be stated geometrically) --- a textbook example is the generic model of the theory of local rings, which satisfies a non-geometric formulation of the field axiom. For more details on non-geometric field axioms, see \cite[\S 2]{JohnstoneSpectra}. For a more general perspective on the role of non-geometric sequents in applied topos theory, we recommend \cite{Blechschmidt}.} 
		\item Although the polynomial ring $K[T]$ and $\Alin$ can be defined geometrically (assuming Hypothesis~\ref{hyp:BerkovichCLASSIFICATION}), it is presently unclear how to do the same for the convergent power series ring $K\{R^{-1}T\}$ --- in particular, how to formulate the condition ``$f\in\A$ converges on a radius $R$'' geometrically.%\footnote{We have yet to check the details, but here's a plausible constructive workaround for items (i) and (ii): view $K\{R^{-1}T\}$ as a completion of some polynomial ring $K_0[T]$ with respect to the Gauss norm $||\cdot||$, where $K_0$ is an algebraically closed (discrete) field that can be completed with respect to the non-Archimedean norm $|\cdot|$ to get $K$. Unclear how algebraic closure or completion interact in this setting though.}  
		\item So long as Hypothesis~\ref{hyp:BerkovichCLASSIFICATION} holds (in fact, we can eliminate the assumption that $K$ is complete here), Theorem~\ref{thm:Alin=rgoodFILTER} is a fully geometric result. On the other hand, as pointed out in Discussion~\ref{dis:classASSBerk}, the full strength of  Theorem~\ref{thm:BerkovichDISC} is a classical result due to the use of Assumption $(\star)$.
	\end{enumerate}	
\end{summary}

%% For the reader committed to constructive/geometric mathematics: although we haven't checked the details, there may be a constructive workaround for items (i) and (ii) in Summary~\ref{sum:classicalVSgeometric} by viewing $K\{R^{-1}T\}$ as a completion of the polynomial ring $K_0[T]$ with respect to the Gauss norm $||\cdot||$, where $K_0$ is a geometric field that can be completed with respect to the non-Archimedean norm $|\cdot|$ to get $K$.

\subsection{Applications to the Trivial Case} As a slick application of Theorem~\ref{thm:BerkovichDISC}, we recover the familiar characterisations of $\M(\A)$ when $K$ is trivially valued (see Example~\ref{ex:BerkPOWERseriesRING}). What's new here? For one, the proofs given here appear quite different from the standard arguments \cite{JonssonAnnotations}, and are also shorter. More fundamentally, they give a very interesting indication of how Berkovich's characterisation of $\M(\A)$ (via nested sequences of discs) is in fact more robust than previously thought.

\begin{example}[Case: $R<1$]\label{ex:R<1} 
	If $R<1$ and $K$ is trivially valued, then $K_R=\{0\}$. The $R$-good filters are thus entirely determined by their radii, and so the space of $R$-good filters is equivalent to $\overleftarrow{[0,R]}$ (see Observation~\ref{obs:FILTERradius}). Applying Theorem~\ref{thm:BerkovichDISC}, one deduces that  $\M(\A)$ is classically equivalent to $\overleftarrow{[0,R]}$, essentially for free.
	
	\begin{figure}[h!]
	\centering
	\includegraphics[width=0.36\linewidth]{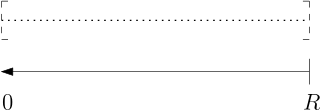}
	\caption{$\M(\A)$, when $K$ is trivially valued and $R<1$}
\end{figure}
\end{example}

\begin{example}[Case: $R\geq 1$]\label{ex:R>1} If $R\geq 1$ and $K$ is trivially valued, then notice $K_R=K$. Consider the following two subcases:
	\begin{itemize}
		\item[] \textbf{Subcase 1:} \textit{$\F$ is an $R$-good filter with radius $\rf\geq 1$.} In which case, $B_{q}(k)\in\F$ for any $k\in K$ and any $q>\rf$. [Why? Since $\F$ is $R$-good, there must exist some $B_{q}(k')\in\F$ for any $q>\rf$. Since $K$ is trivially valued, we get $|k-k'|\leq1 \leq\rf < q$ for any $k\in K$, and so $B_{q}(k')= B_{q}(k)$.]
		
		Hence, an $R$-good filter (with radius $\rf\geq 1$) is entirely determined by its radius, and so the space of such $R$-good filters form an interval $\overleftarrow{[1,R]}$.
		\item[] \textbf{Subcase 2:} \textit{$\F$ is an $R$-good filter with radius $\rf<1$.} In which case:
		\begin{enumerate}[label=(\alph*)]
			\item If $B_q(k)$ such that $q>1$, then $B_{q}(k)\in\F$.
			
			[Why? Since $\rf<1$, there must exist $B_{q'}(k')\in\F$ such that $q'\leq 1 <q$. Since $K$ is trivially valued, deduce that $B_{q'}(k')\subseteq B_{q}(k)$ and so $B_{q}(k)\in\F$ as $\F$ is upward closed.]
			
		%	\item If $B_q(0)\in\F$ and $q\leq 1$, then $k=0$ for any $B_{q'}(k)\in\F$ such that $q'\leq 1$.
			
		%	[Why? Take the intersection of $B_{q'}(k),B_{q}(0)\in\F$ to get $B_{q''}(j)\in\F$. Since $B_{q''}(j)\subseteq B_{q}(0)$, this forces $j=0$ since otherwise $1=|j-0|<q\leq 1$, contradiction. Further, since this implies $B_{q''}(0)\subseteq B_{q'}(k)$, deduce that $k=0$ for the same reason.]
			
			\item If $B_{q}(k)\in \F$ and $q\leq 1$, then $k=k'$ for any $B_{q'}(k')\in\F$ such that $q'\leq 1$.
		
			[Why? Take the ``pairwise intersection'' of  $B_q(k),B_{q'}(k')\in\F$ to get $B_{q''}(k'')\in\F$. Since $B_{q''}(k'')\subseteq B_{q}(k)$, this forces $k''=k$ since otherwise $1=|k''-k|< q\leq 1$, contradiction. The same argument shows that $k''=k'$, and so we conclude $k'=k$.]  
		\end{enumerate}
	\end{itemize}
	Summarising, an $R$-good filter $\F$ with radius $\rf\geq 1$ is entirely determined by its radius (Subcase 1), whereas an $R$-good filter $\F$ with $\rf< 1$ is determined by its radius plus its unique choice of $k\in K$ (Subcase 2). Applying Theorem~\ref{thm:BerkovichDISC} once more, this gives the following diagram on the left:
	\begin{figure}[ht!]
		\centering
		\subfloat[{{$\M(\A)$, when $K$ is trivially valued and $R\geq 1$}}]{	\includegraphics[width=0.47\linewidth]{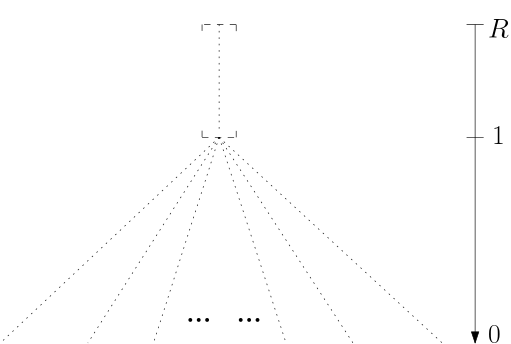}
			\label{fig:TrivialCase2}}\quad
		\subfloat[{{$[\protect\lav]$}}]{	\includegraphics[width=0.47\linewidth]{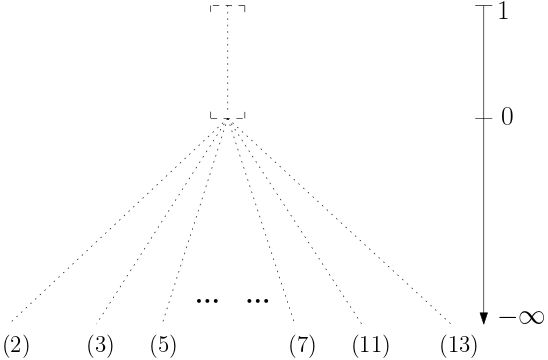}
			\label{fig:av}}	
		\caption{}
	\end{figure}
	
$\M(\A)$ should be compared with $[\lav]$, the space of multiplicative seminorms on $\Z$ defined in \cite{NVOstrowski}, pictured on the right. In particular, notice the key branching points of the spaces are identical up to a $\log_R\argu$ transformation.
\end{example}

\begin{remark} Of course, Examples~\ref{ex:R<1} and ~\ref{ex:R>1} present $\M(\A)$ using upper reals instead of Dedekinds (in particular, they are not Hausdorff), but this can be resolved by applying Assumption $(\star)$ once more. 
\end{remark}

\begin{comment}

\begin{discussion} Instructive to compare Alternative proof from \cite{JonssonAnnotations}. Involved...
\begin{enumerate}[label=(\roman*)]
\item $|\cdot|\mapsto |T|$
\end{enumerate}    
\end{discussion}

\end{comment}

\section{An Algebraic Fork in the Road}\label{sec:algforkinroad}

Let us review our work. In principle, the extension of Berkovich's original result to Theorem~\ref{thm:BerkovichDISC} could have been discovered much earlier. And yet it was not --- to our knowledge, the idea that one could modify the language of rigid discs to classify the points of $\M(K\{R^{-1}T\})$ \emph{without} requiring $K$ to be non-trivially valued was not suspected by the experts.\footnote{Difficult, of course, to properly gauge what the experts may or may not have suspected, but this may be inferred from how the literature emphasises the necessity of being non-trivially normed. For instance, in Jonsson's lecture notes on Berkovich's classification of the points of the Berkovich Affine line $\AffBerk^1$ over $K$, he remarks: ``The second assumption [that $K$ is non-trivially valued] is necessary [...] if the norm on $K$ is trivial, then there are too few discs.'' \cite[Proof of Theorem 3.10]{JonssonDynamicsLowDim}} The reason for this seems to be that Theorem~\ref{thm:BerkovichDISC}, both in its formulation and proof, belongs to the point-free perspective in an essential way. Of course, one certainly does not need to be a topos theorist in order to e.g. understand what a formal ball is, but there are specific intuitions from the point-free perspective that guided us to our result:
\begin{enumerate}[label=(\alph*)]
	\item The topos theorist is trained to recognise how the same idea may be expressed in different settings, and to ask about the connections.\footnote{For the insider: the topos theorist knows that there are many sketches of the same elephant \cite{J1,J2}. %See \cite{J1,J2}.
	} In particular, recall: any essentially propositional theory corresponds to a space of completely prime filters. Work in \cite{NVOstrowski} showed that the theory of multiplicative seminorms on $\Z$ is essentially propositional. Given the obvious parallels between $\M(\Z)$ and $\M(K\{R^{-1}T\})$ when $K$ is trivially normed and $R\geq 1$ (cf. Example~\ref{ex:R>1}), the topos theorist may guess that $\M(K\{R^{-1}T\})$ can also be described via completely prime filters.
	\item Once we defined the formal ball $B_{q}(k)$ and the correct inclusion relation $B_{q'}(k')\subseteq B_{q}(k)$, the rest of the argument began to fall into place. But notice: the decision to use formal balls (as opposed to the classical rigid discs) reflects the localic perspective that it is the \emph{opens} that are the basic units for defining a space, and \textbf{not} the underlying \emph{set of points}. Compare, for instance, our use of formal balls with the symbols $P_{qr}$ in the propositional theory of Dedekinds $\thT_\R$. 
\end{enumerate}

Though a relatively simple result, the surprising aspects of Theorem~\ref{thm:BerkovichDISC} hints at the clarifying potential of using point-free ideas to investigate foundational issues in non-Archimedean geometry. Motivated by this, we conclude with a list of inter-related problems, geared towards testing this idea.

\subsection{Trivially vs. Non-Trivially valued Fields} First, an obvious piece of mathematical due diligence. Many results in Berkovich geometry are sensitive to the case-split between trivially vs. non-trivially valued (non-Archimedean) fields. This motivates the following general exercise:

%%% an interesting theme of 

\begin{problem} Pick an interesting result in Berkovich geometry that appears to rely on the base field $K$ being non-trivially valued. Examine why. Just as in Theorem~\ref{thm:BerkovichDISC}, 
	can we eliminate this hypothesis by applying point-free techniques? If yes, what applications does this generalised result give us?
\end{problem}

\begin{discussion}\label{dis:rationaldiscsAPPROX} Here's one place to start looking. In non-Archimedean geometry, a common strategy for proving results on an irrational (or open) disc $\mathsf{D}$ is to first prove the result for \emph{rational closed discs}, before extending the result to $\mathsf{D}$ by expressing it as a nested union of rational closed discs (see e.g. \cite{Benedetto}). This strategy obviously breaks down when $K$ is trivially valued, but the language of upper reals may offer a workaround (cf. Discussion~\ref{dis:PROOFofBerk}).
\end{discussion}

\subsection{Overconvergent Lattices and Rigid Geometry} We now discuss a more solid lead. In unpublished work of Dudzik \cite{DudzikOverconvergent} as well as Baker's Berkeley Lecture Notes \cite{BakerBerkeley}, the following notion was defined:

\begin{definition} Consider a lower-bound distributive lattice $L$, with finite $\land$ and $\vee$ and a minimal element denoted $\bot$. 
	
	\begin{enumerate}[label=(\roman*)]
		\item For $x,x'\in L$, we say \emph{$x$ is inner in $x'$}, written $x\triangleleft x'$, if for all $z\geq x'$, there exists $w$ with $x\land w = \bot$ and $x'\vee w=z$.
		\item We call $L$ an \emph{overconvergent lattice} if for all $x,y\in L$ where $x\land y=\bot$, there exists $x'\in L$ such that $x\triangleleft x'$ and $x'\land y=\bot$. 
	\end{enumerate} 
\end{definition}

It was then showed that these so-called overconvergent lattices captured some key topological features of rigid analytic geometry. We summarise some of their key results in the following theorem:

\begin{theorem}\label{thm:Dudzik} As our setup,
	\begin{itemize}
		\item Let $\A$ be a (strict) affinoid algebra over a suitable\footnote{By which we mean: complete, non-trivially valued and non-Archimedean. Compare this with \cite[Remark 2.5.21]{BerkovichMonograph}, which we briefly discussed at the start of Section~\ref{sec:BerkClassThm}.} field $K$;
		\item Let $X=\Sp(\A)$, i.e. the set of maximal ideals of $\A$ equipped with the canonical topology;
		\item Let $L$ be the lattice of special subdomains of $X$;
		\item Let $\mathsf{P}(L)$ be the set of prime filters and $\mathsf{M}(L)$ the set of maximal filters.
	\end{itemize}
	\underline{Then:}
	\begin{enumerate}[label=(\roman*)]
		\item The lattice $L$ is overconvergent and its elements form a neighbourhood base. %%% Sarah Brodsky and Melody Chan
		\item There exists a canonical surjective map $\mathsf{P}(L)\to\mathsf{M}(L)$ sending a prime filter to the unique maximal filter containing it. When equipped with the quotient topology, $\mathsf{M}(L)$ is equivalent to the Berkovich analytification $X^{\an}$.  %%% Sarah Brodsky and Melody Chan
		\item  $\mathsf{P}(L)$ is equivalent to Huber's adic space of continuous semivaluations on $\A$. %%% March 8, Michael Daub. The connection with the lattice theoretic perspective is more implicit.
	\end{enumerate}	
\end{theorem}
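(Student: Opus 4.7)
The plan is to treat all three parts under one guiding theme: the rational subdomains of $\Sp(\A)$ form a basis capturing both the Berkovich and adic topologies simultaneously, with overconvergence encoding the standard ``$\epsilon$-perturbation'' techniques available in non-Archimedean analysis. Throughout, the strategy parallels our Theorem~\ref{thm:BerkovichDISC}, where filters of formal balls played the role now assumed by filters of rational subdomains.

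For (i), I would first verify that $L$ is a distributive lattice with $\bot = \emptyset$. Finite meets are standard: the intersection of two rational subdomains is rational, obtained by concatenating the defining inequalities. Finite joins and distributivity are handled by the classical fact that rational coverings admit compatible common refinements (Bosch-G\"untzer-Remmert, \S 7). For overconvergence itself, suppose $U, V \in L$ are disjoint with $U = \{x : |f_i(x)| \leq |g(x)|\}_i$. Applying the maximum modulus principle (i.e. that $f_i/g$ attains its supremum on $U$), I would choose $\epsilon \in |K^\times|$ small enough so that $U \triangleleft U' := \{x : |f_i(x)| \leq (1-\epsilon)|g(x)|\}$ while preserving $U' \cap V = \emptyset$. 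That $L$ forms a neighbourhood base is a standard result in rigid analytic geometry.

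For (ii), the surjection $\mathsf{P}(L) \twoheadrightarrow \mathsf{M}(L)$ is a lattice-theoretic consequence of overconvergence: any prime filter $\mathcal{P}$ admits a unique maximal refinement, obtained by closing $\mathcal{P}$ under the inner relation $\triangleleft$ (overconvergence ensures this closure does not collapse to the whole lattice). The equivalence $\mathsf{M}(L) \simeq X^{\an}$ I would then prove in parallel to Theorem~\ref{thm:BerkovichDISC}: to each multiplicative seminorm $|\cdot|_x \in X^{\an}$ assign the filter $\mathcal{F}_x := \{U \in L : |\cdot|_x \in U^{\an}\}$, and conversely to each maximal filter $\mathcal{F}$ associate the limit seminorm $|f|_\mathcal{F} := \inf_{U \in \mathcal{F}} \sup_{y \in U^{\an}} |f(y)|$. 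Multiplicativity of $|\cdot|_\mathcal{F}$ follows from the non-Archimedean strong triangle inequality, while maximality of $\mathcal{F}_x$ uses precisely the overconvergent wiggle room from (i). The homeomorphism claim reduces to checking that the rational subdomain basis on $X^{\an}$ matches the quotient topology transported from $\mathsf{P}(L)$.

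For (iii), the identification with Huber's adic space $\mathrm{Spa}(\A)$ proceeds from the observation that a continuous semivaluation $v$ on $\A$ is determined by specifying, for each rational subdomain $U = \{|f_i| \leq |g|\}$, whether $v(f_i) \leq v(g)$; this assignment yields a prime filter on $L$, and conversely the limiting semivaluation of a prime filter recovers Huber's construction. The topology match follows because the rational open subsets of $\mathrm{Spa}(\A)$ are generated by exactly the same inequalities. The main obstacle lies here: higher-rank valuations in $\mathrm{Spa}(\A)$ correspond to non-maximal prime filters, and the canonical map $\mathrm{Spa}(\A) \to X^{\an}$ (collapsing higher-rank specialisations) must be shown to coincide with $\mathsf{P}(L) \twoheadrightarrow \mathsf{M}(L)$. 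Verifying this correspondence amounts to carefully unwinding how iterated filter refinement encodes the vertical generisation structure of adic points --- a genuinely non-trivial matching that is the substantive content of the theorem.
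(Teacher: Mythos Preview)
The paper does not actually supply a proof of this theorem. It is presented as a summary of results attributed to Dudzik's unpublished note \cite{DudzikOverconvergent} and Baker's Berkeley lecture notes \cite{BakerBerkeley}, with the remark that it is ``essentially a reworking of classical facts about rigid geometry \cite{FresnelvanderPut,vdPutSchneider}''. The surrounding discussion only comments on the significance of the result (the resemblance between overconvergent and normal lattices, the role of overconvergence in guaranteeing uniqueness of the maximal filter containing a given prime filter) and poses follow-up problems. So there is no in-paper proof to compare your proposal against.

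That said, a couple of technical points in your sketch deserve attention. In part (i), your perturbation goes the wrong way: to obtain $U \triangleleft U'$ you need $U'$ to \emph{contain} $U$, so the defining inequalities of $U'$ must be \emph{looser}, e.g.\ $|f_i| \leq (1+\epsilon)|g|$ rather than $(1-\epsilon)|g|$; as written your $U'$ is a subset of $U$. In part (ii), your description of the map $\mathsf{P}(L)\to\mathsf{M}(L)$ as ``closing $\mathcal{P}$ under the inner relation $\triangleleft$'' is not quite the mechanism: the paper's discussion (and the normal-lattice analogy it invokes) indicates that overconvergence is what guarantees each prime filter sits inside a \emph{unique} maximal filter, but the map itself is simply ``send $\mathcal{P}$ to the maximal filter containing it'', not a closure operation on $\mathcal{P}$. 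Your overall architecture---rational subdomains as a basis, filter-to-seminorm via $\inf\sup$, prime filters encoding higher-rank data---is in the right spirit and parallels the paper's own Theorem~\ref{thm:BerkovichDISC} as you note, but the details would need to be sourced from the cited references rather than from anything the present paper proves.
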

%%% Start with Sp(A) and define G-topology on it via special subdomains (finite union of rational subdomains)
Closer examination of the mechanics underlying the  proof of Theorem~\ref{thm:Dudzik} seems warranted. Although the theorem is essentially a reworking of classical facts about rigid geometry \cite{FresnelvanderPut,vdPutSchneider}, the lattice-theoretic perspective 
brings into focus the key topological ingredients. In particular, locale theorists may recognise the family resemblance between overconvergent lattices and \emph{normal} lattices, which suggests that the hypothesis of overconvergence was chosen precisely to guarantee that each prime filter is contained in a unique maximal filter\footnote{Normality and overconvergence appear to be essentially dual notions. Recall: for a (bounded) distributive lattice $L$ and $a,a'\in L$, we say \emph{$a'$ is well inside $a$}, written $a'\eqslantless a$ if there exists $y$ such that $a\vee y=\top$ and $a'\land y=\bot$. Such a lattice $L$ is said to be \emph{normal} if whenever $a\vee b=\top$, there exists $a'\eqslantless a$ with $a'\vee b=\top$. In particular, a (bounded) distributive lattice $L$ is normal iff each prime ideal in $L$ is contained in a unique maximal ideal \cite[\S 3.6-3.7]{StoneSp}. Of course, this is a characterisation of normality for \emph{bounded} distributive lattices, but a similar (dual) result appears under the weaker hypothesis of only being lower-bounded \cite{Cornish}.} (see item (ii) of the Theorem). % In addition, Dudzik \cite{DudzikOverconvergent} also uses ideas from Hochster's theory of spectral spaces, except he extends it to a slightly different class of spaces known as \emph{quasi-spectral spaces} (= inverse limits of locally finite $T_0$ spaces).\footnote{In particular, $\mathsf{P}(L)$ is a quasi-spectral space.} 
%% but a similar (dual) result appears under the weaker hypothesis of only being lower-bounded \cite{Cornish}. The comment on duality comes from Johnstone's Stone Spaces, essentially Cornish looks at lattices of closed subsets not open subsets.
Some natural test problems and questions:

\begin{problem} In his note \cite{DudzikOverconvergent}, Dudzik left unfinished the problem of applying overconvergent lattices to the classification of the points of $\AffBerk^1$. A good exercise: finish this. In particular, our proof of Theorem~\ref{thm:BerkovichDISC} should be relevant. However, what do $R$-good filters have to do with the filters of overconvergent lattices? In addition, do point-free techniques allow us (once more) to eliminate the requirement that $K$ be non-trivially valued in Theorem~\ref{thm:Dudzik}?
\end{problem}

%\begin{problem} Develop and make precise the connections between overconvergent lattices and normal lattices. For instance, quite intriguingly, normal lattices have independently shown up in topos-theoretic approaches to quantum theory (see e.g. \cite{SpittersVickersWolters}) --- are there productive parallels that can be drawn between this and the role of overconvergent latices in non-Archimedean geometry? %% What may e.g. Wallman compactification have to say about Berkovich or adic spaces?
% \end{problem}

\subsection{Model-theoretic vs. Point-free perspectives} Interspersed throughout this chapter were various mentions of Hrushovski-Loeser's groundbreaking work \cite{HruLoe}, which applied model-theoretic tools to Berkovich geometry. For the topos theorist, a natural question is the following:

\begin{problem}\label{prob:HruLoePTFREE} Leveraging point-free techniques, simplify and/or extend the framework of Hrushovski-Loeser spaces.
\end{problem}

\begin{discussion} A good place to start: where do overconvergent lattices feature in their framework? Relatedly, the technology of pro-definable sets bears a strong resemblance to $R$-structures and rounded ideal completions (which implicitly featured in our use of upper reals). It would be interesting to see if this connection can be developed to indicate some natural simplifications. (Discussion~\ref{dis:rationaldiscsAPPROX} may be relevant.)
\end{discussion}

\begin{discussion}[The Role of Stability]\label{dis:HruLoeComplexity} Recall from Discussion~\ref{dis:LogicalComplexity} that the topos theorist and model theorist hold different attitudes towards the presence of strict order in a theory. For the model theorist, strict order is a sign that the theory is complex [i.e. unstable], and so a great deal of work is needed in order to account for e.g. the instability in $\ACVF$ via the language of stably-dominated types \cite{HruStableDomination,HruLoe}. On the other hand, strict order is \emph{not} a sign of complexity for the topos theorist --- the theory of the rationals and the Dedekinds both have strict order and are considered well-behaved [i.e. they are essentially propositional]. One is therefore led to ask: are (model-theoretic) notions such as stable/meta-stable/unstable truly essential to understanding the topology of non-Archimedean spaces? 
	
One way to parse this question: perhaps the way to reduce our sensitivity to strict order is not by passing to stably-dominated types but by passing to a different fragment of logic. Of course, given the context of this paper, we have geometric logic in mind but one may also wish to consider e.g. continuous logic, as done in \cite{BenYaacovACMVF}.\footnote{Notice, however, stability still plays a role in \cite{BenYaacovACMVF}, albeit in the continuous logic setting. Notice also that the paper restricts itself to \emph{metric valued fields} and not valued fields whose value group $\Gamma$ is an arbitrary ordered abelian group (with minimal element $0$). This will become relevant when we start thinking about extending our logical methods to adic spaces -- see also Discussion~\ref{dis:adic}.} Another approach is to ask: is the notion of stability itself fundamental to understanding how the residue field and value group interact? We make no firm assertions at this juncture, but recent work on residue field domination \cite{ResDom} indicates some interesting evidence to the contrary.

% 	in particular that the key questions in valued fields involve understanding how the residue field and value group interact, which goes beyond the usual  % (applicable to valued fields whose residue field sort is not necessarily stable)
\end{discussion}

\begin{discussion}[Adic Spaces]\label{dis:adic} There have been recent efforts to extend Hrushovski-Loeser's work to the setting of adic spaces (see e.g. \cite{JinhePabloProdefinable}). However, Theorem~\ref{thm:Dudzik} alerts us to one obvious issue. At least in the setting of $K$-affinoid algebras, the distinction between Berkovich spaces vs. adic spaces is analogous to the distinction between maximal filters vs. prime filters. Recalling Discussion~\ref{dis:typesMODELSfilters}, this raises questions about whether the model theorist's language of definable types is suited for analysing adic spaces since types properly correspond to ultrafilters and not prime filters. Given what we know about localic spaces and geometric logic, Theorem~\ref{thm:Dudzik} leads us to wonder if the point-free approach would be better suited (although some care should be taken regarding the distinction between prime vs. completely prime filters). 
	
%%% Hrushovski: Berkovich points strictly translate to stably dominatred types but with adic spaces over arbitrary bases there is no such 1-1 connection. Of course one can still study the definable ones.  
%%% Thought: there are two levels of unique extensions going on. How can we complete an ultrafilter to another one (via definability of types), but also how can we complete a prime filter to an ultrafilter (which is something made precise in the language of overconvergent filters).

\end{discussion}

%\newpage 
\appendix

\section{Constructing multiplicative seminorms from $K$-seminorms}\label{Appendix:BerkDisc}

We now provide the rest of the details of the proof of Theorem~\ref{thm:BerkovichDISC}. For clarity, we review the key steps of the argument. Recall the main goal: we wish to show that $\M(\A)$ is classically equivalent to $\overleftarrow{\M(\Alin)}$. After declaring our reliance on Assumption $(\star)$, the argument proceeds by construction: given $|\cdot|_x\in\M(\A)$, one defines $|\cdot|_x\big|_{\Alin}$ on $\Alin$ by taking the obvious restriction, whereas given $|\cdot|_\F\in\overleftarrow{\M(\Alin)}$, we start by first extending $|\cdot|_\F$ to a bounded multiplicative seminorm on $K[T]$, before finally defining $\widetilde{|\cdot|_\F}$ on $\A$. It is fairly obvious that if both constructions are well-defined, then they are inverse to each other because they are both determined by their values on the linear polynomials (which would complete the proof). In addition, it is clear that the restriction $|\cdot|_x\big|_{\Alin}$ defines a $K$-seminorm, but more effort is needed to check that $\widetilde{|\cdot|_\F}$ satisfies the required properties.\newline 

We organise the argument into the following two claims. 

\begin{claim}\label{claim:K[T]seminorm} The extension map $|\cdot|_\F\colon K[T]\to [0,\infty)$  (given by Equation~\eqref{eq:polynomialEXT}) defines a bounded multiplicative seminorm on $K[T]$, satisfying the ultrametric inequality. 
\end{claim}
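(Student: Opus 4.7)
The plan is to reduce the claim to a classical identity about seminorms attached to individual formal balls. For each ball $B_q(k)$ with $k \in K_R$ and $q \in Q_+$, I would first define an auxiliary map
\[|\cdot|_{B_q(k)} \colon K[T] \longrightarrow [0,\infty), \qquad f = c\prod_{j=1}^m (T-b_j) \mapsto |c|\prod_{j=1}^m \max\{|k-b_j|,q\},\]
which is well-defined (since $K$ is algebraically closed) and agrees with Equation~\eqref{eq:EXTENDkseminorm} on $\Alin$. Multiplicativity is immediate from the product formula. For the ultrametric inequality, I would establish the classical identity $|f|_{B_q(k)} = \max_i |c_i| q^i$, where $f = \sum_i c_i (T-k)^i$ is the expansion of $f$ around $k$; this follows from the strong triangle inequality on $K$ by an induction on $\deg f$ (or a direct case analysis on whether each $|k-b_j|$ lies above or below $q$). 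The Gauss-type formula immediately yields the ultrametric inequality, and boundedness by $||\cdot||$ follows whenever $q \leq R$ since then $\max_i |c_i| q^i \leq \max_i |c_i| R^i \leq ||f||$.

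Next, I would show that the extension of $|\cdot|_\F$ from Equation~\eqref{eq:polynomialEXT} coincides with the directed infimum $\inf_{B_q(k)\in\F} |\cdot|_{B_q(k)}$ on all of $K[T]$. On linear polynomials this is just the definition in Construction~\ref{cons:FILTERTOSEMINORM}. For products, the key observation is that $\F$ is directed under $\supseteq$ (in fact totally ordered, as established in the proof of Claim~\ref{claim:GETrgoodFILTER} and transported to arbitrary $R$-good filters via Theorem~\ref{thm:Alin=rgoodFILTER}). On such a directed net of decreasing bounded values in $[0,\infty)$, finite products commute with infima: if $a_i \downarrow a$ and $b_i \downarrow b$, then $a_i b_i \downarrow ab$. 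This pushes the product formula $|f|_{B_q(k)} = |c|\prod_j |T-b_j|_{B_q(k)}$ down to $|f|_\F = |c|\prod_j |T-b_j|_\F$, matching Equation~\eqref{eq:polynomialEXT}.

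The remaining properties then transfer to $|\cdot|_\F$ by routine arguments on directed infima: multiplicativity is immediate from the product formula on $|\cdot|_\F$ just established; the ultrametric inequality passes through the infimum (using a common refinement in $\F$ to pair two approximants, then applying the elementary inequality at that refined ball); and boundedness follows from $|f|_\F \leq |f|_{B_q(k)} \leq ||f||$ for any $B_q(k)\in\F$ with $q \leq R$, and such balls always exist because $\rf \leq R$ by Observation~\ref{obs:FILTERradius}. The main obstacle is the classical Gauss-type identity $|f|_{B_q(k)} = \max_i |c_i| q^i$, which requires careful bookkeeping with the strong triangle inequality; once this is in hand, everything else is formal manipulation of directed infima.
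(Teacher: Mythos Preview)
Your approach is essentially the paper's: work ball-by-ball, identify the product-formula seminorm $|\cdot|_{B_q(k)}$ with the Gauss expression $\max_i|c_i|q^i$, then pass to the infimum over $\F$. The paper reverses your order slightly---it proves the Gauss formula is multiplicative directly (via the standard leading-term argument on the coefficients of a product) and then deduces the identity with the product formula by checking on linear factors, whereas you take multiplicativity of the product formula as given and propose to establish the Gauss identity by induction or case analysis. Both routes work. You are actually more explicit than the paper on one point: you justify why $\inf_{B\in\F}|f|_B$ coincides with $|c|\prod_j|T-b_j|_\F$ by commuting a directed infimum with a finite product, which the paper's Step~2 asserts without comment. (Incidentally, total ordering of $\F$ can be read off directly from closure under pairwise intersections plus the ultrametric on $K$; the detour through Theorem~\ref{thm:Alin=rgoodFILTER} is unnecessary.)

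There is one genuine gap in your boundedness step. You claim a ball $B_q(k)\in\F$ with $q\le R$ always exists because $\rf\le R$, but this can fail: when $\rf$ equals the right Dedekind section of $R$ (for instance when $R\notin\Q$ and the filter is as coarse as $R$-goodness allows), every ball in $\F$ has radius strictly greater than $R$. The paper's route avoids this: it bounds $|T-a|_\F\le\max\{|a|,R\}=\|T-a\|$ directly on linear factors (this was already done in Claim~\ref{claim:GETk-SEMINORM}), and then uses the multiplicativity you have just established, together with multiplicativity of $\|\cdot\|$, to extend the bound to all of $K[T]$. Your argument is easily repaired along these lines.
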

\begin{proof}[Proof of Claim] Our argument relies on the explicit characterisation of $|\cdot|_\F$ to perform the required checks. 
	\subsubsection*{Step 1: Working ``level-wise''} Fix a ball $B_{q}(k)\in\F$, and define the obvious extension of $|\cdot|_{B_{q}(k)}$ from $\Alin$ to $K[T]$:
	\begin{align}
	|\cdot|_{B_{q}(k)}\colon K[T]&\longrightarrow [0,\infty)\\
	f &\longmapsto |c|\cdot \prod^m_{j=1} |T-b_j|_{B_{q}(k)} \nonumber 
	\end{align}
	Following the cue of Remark~\ref{rem:AVOIDsups}, we may also express $f$ as a finite power series centred at $k$:
	\begin{equation}\label{eq:repFINITEPOWERSERIES}
	f=\sum_{i=0}^{m}c_i(T-k)^i,
	\end{equation}
	and define a new map
	\begin{align}\label{eq:classicalDEFnorm}
	\widehat{|\cdot|_{B_{q}(k)}}\colon K[T]&\longrightarrow [0,\infty)\\
	f &\longmapsto  \max_i|c_i|q^i \nonumber 
	\end{align}
	We claim that $\widehat{|\cdot|_{B_q(k)}}$ defines a multiplicative seminorm (though not necessarily bounded, since $q$ is an arbitrary positive rational). This follows from noting: 
	\begin{itemize}
		\item \emph{$\widehat{|0|_{B_q(k)}}=0$ and $\widehat{|1|_{B_q(k)}}=1$}. Obvious.
		\item \emph{$\widehat{|\cdot|_{B_q(k)}}$ satisfies the ultrametric inequality.} Straightforward, but we elaborate for completeness. Given $f,f'\in K[T]$, assume WLOG that $\deg(f)=m\geq m'=\deg(f')$. Then, add their corresponding finite power series (as in Equation~\eqref{eq:repFINITEPOWERSERIES}) and
		compute:
		\begin{align*}
		\widehat{|f+f'|_{B_{q}(k)}}&= \widehat{|\sum_{i=0}^{m}c_i(T-k)^i + \sum_{i=0}^{m'}c'_i(T-k)^i|_{B_q(k)}} \\
		&= \widehat{|\sum_{i=0}^{m}(c_i+c'_i)(T-k)^i|_{B_q(k)}} \qquad\qquad\qquad \qquad\quad\,\,\,\,\,\,\, \text{[writing $c'_i=0$ for all $i>m'$]}\\
		& =  \max_{i}|c_i+c'_i|q^i \qquad\qquad\qquad\qquad\qquad\qquad  \qquad\quad\,\,\text{[by Definition of $\widehat{|\cdot|_{B_q(k)}}$]}\\
		&\leq \max_i\left\{\max\{|c_i|\,,\,|c'_i|\}\cdot q^i\right\} \qquad\qquad\qquad\qquad\qquad\, \, \text{[since $|c_i+c'_i|\leq\max\{|c_i|,|c_i'|\}$]}\\
		&= \max \{\widehat{|f|_{B_q(k)}}\,,\, \widehat{|f'|_{B_q(k)}}\} \qquad\qquad\qquad\qquad\qquad\,\quad  \text{[since the $\max$'s commute]}.
		\end{align*}
		\item \emph{$\widehat{|\cdot|_{B_q(k)}}$ is multiplicative.} 
		Since $f,f'$ are both polynomials, there exists $v,w\in\N$ such that
		
		$$\widehat{|f|_{B_q(k)}}= \max_i |c_i|q^i= |c_v|q^v$$
		
		$$\widehat{|f'|_{B_q(k)}}=\max_i|c'_i|q^i=|d_w|q^w$$
Pick $v,w$ to be first maximising indices in the lexicographic order.\footnote{To illustrate: if $\widehat{|f|_{B_{q}(k)}}=|c_i|q^i=|c_{i+1}|q^{i+1}$ and $|c_{j}|q^j< |c_i|q^i$ for all $j<i$, then we set $v:=i$ instead of $i+1$.} For explicitness, we write:
			\begin{equation}
			\widehat{|f|_{B_q(k)}}\cdot \widehat{|f'|_{B_q(k)}}=|c_v||d_w|q^{v+w}.
			\end{equation}
	\begin{equation}
	\widehat{|f\cdot f'|_{B_q(k)}}=\widehat{|\left(\sum c_id_{n-i}\right)(T-k)^n|_{B_q(k)}}=\max_{n} \big|\sum c_id_{n-i}\big|q^n %= |\sum c_id_{u-i}|q^u,
	\end{equation}
%which clearly attains its maximum at $u=v+w$.
The ultrametric inequality on $K$ almost immediately gives $\widehat{|f\cdot f'|_{B_q(k)}}\leq \widehat{|f|_{B_q(k)}}\cdot \widehat{|f'|_{B_q(k)}}$. For the reverse inequality, denote $u:=v+w$. Note that $\sum_{i}c_id_{u-i}$ equals $c_vd_w $ plus other terms of the form $c_{i'}d_{u-i'}$, and so one of these two cases must occur:
\begin{itemize}
	\item[] \textbf{Case 1: }\emph{$i'$ precedes $v$ in the lexicographic order.} In which case $|d_{u-i'}|q^{u-i'}\leq |d_v|q^v$ and $|c_{i'}|q^{i'}<|c_v|q^v$;
	\item[] \textbf{Case 2: }\emph{$u-i'$ precedes $w$ in the lexicographic order.} In which case $|d_{u-i'}|q^{u-i'}< |d_v|q^v$ and
	$|c_{i'}|q^{i'}\leq |c_v|q^v$.
	\end{itemize}
Either way, the ultrametric inequality once again gives $|\sum c_id_{u-i}-c_vd_w|<|c_vd_w|$, and so $|\sum c_id_{u-i}|=|c_vd_w|$.\footnote{Why? This follows from the fact that $|x+y|=\max\{x,y\}$ if $|x|<|y|$ if $|\cdot|$ is ultrametric. To prove the claim, notice $|y|\leq \max\{|x+y|,|-x|\}$, and so $|x|<|y|$ implies $|y|\leq |x+y|$. But since $y\leq |x+y|\leq \max\{|x|,|y|\}$, conclude $|y|=|x+y|$.} Hence, conclude that $\widehat{|f|_{B_q(k)}}\cdot \widehat{|f'|_{B_q(k)}}\leq \widehat{|f\cdot f'|_{B_q(k)}}$, verifying multiplicativity of $\widehat{|\cdot|_{B_q(k)}}$. 

	\end{itemize}
	
	Finally, we claim that:
	\begin{equation}\label{eq:ballLEVELid}
	|\cdot|_{B_q(k)}=\widehat{|\cdot|_{B_q(k)}}.
	\end{equation}
	Why? Since $|\cdot|_{B_{q}(k)}$ is multiplicative and $K$ is algebraically closed, it suffices to show they agree on the level of linear polynomials. But this is clear since
	\begin{equation}
	|T-a|_{B_q(k)}=\max\{|k-a|,q\}=\widehat{|T-a|_{B_q(k)}}, \qquad\quad \text{for any $T-a$}.
	\end{equation}
Hence, conclude that $|\cdot|_{B_q(k)}$ is in fact a multiplicative seminorm satisfying the ultrametric inequality. 
	
	\subsubsection*{Step 2: Lifting to $\F$} The argument is similar to the proof of Claim~\ref{claim:GETk-SEMINORM}. To show that $|\cdot|_\F$ is a muliplicative seminorm on $K[T]$ satisfying the ultrametric inequality, this amounts to checking a list of properties. But by Step 1, we know that these properties already hold for $|\cdot|_{B_q(k)}$, for all $B_{q}(k)\in \F$. Hence, since $|\cdot|_\F=\inf_{B_q(k)\in\F}|\cdot|_{B_q(k)}$, observe that these properties are respected by taking $\inf$'s, and conclude that they hold for $|\cdot|_\F$ as well. 
	
	\begin{comment}
	
	To illustrate, suppose $f,f'\in K[T]$. By Step 1, we know that the ultrametric inequality holds for \emph{all} $|\cdot|_{B_q(k)}$ where $B_{q}(k)\in\F$:
	\begin{equation}
	|f+f'|_{B_{q}(k)}\leq \max\{|f|_{B_{q}(k)}, |f'|_{B_{q}(k)}\}
	\end{equation}
	Then, since taking $\inf$'s respects weak inequalities, conclude that
	\begin{equation}
	|f+f'|_{\F}\leq \max\{|f|_{\F}, |f'|_{\F}\}.
	\end{equation}
\end{comment}

	\subsubsection*{Step 3: $|\cdot|_\F$ is bounded} We do not get boundedness of $|\cdot|_\F$ from Step 1, so this must be checked separately. But since $\F$ is $R$-good, this implies 
	\begin{equation}
	|T-a|_\F\leq \max\{|a|,R\} = ||T-a||,
	\end{equation}
	where $||\cdot||$ is the Gauss norm restricted to $K[T]$.\footnote{Why? See proof of Claim~\ref{claim:GETk-SEMINORM}.} Since all polynomials factor into linear polynomials, and since both $||\cdot||$ and $|\cdot|_\F$ are multiplicative seminorms (by Step 2),  deduce that
	\begin{equation}
	|f|_\F\leq ||f||, \qquad\qquad  f\in K[T].
	\end{equation}
	This finishes the proof of our claim.
\end{proof}

\begin{claim}\label{claim:POWERseminorm} The construction $\widetilde{|\cdot|_\F}$ defines a bounded multiplicative seminorm on $\A$.
\end{claim}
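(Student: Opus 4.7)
The strategy is to leverage Claim~\ref{claim:K[T]seminorm} (which handles $K[T]$) together with the fact that $K[T]$ is dense in $\A$ with respect to the Gauss norm $||\cdot||$, using continuity to transport each of the required properties to the completion. Throughout, for $f = \sum_{i=0}^\infty a_i T^i \in \A$, write $f_n := \sum_{i=0}^n a_i T^i$ for its $n$-th partial sum, so that $f_n \to f$ in $||\cdot||$.

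The first task is to verify that the limit defining $\widetilde{|\cdot|_\F}$ actually exists. By Claim~\ref{claim:K[T]seminorm}, $|\cdot|_\F$ is bounded by $||\cdot||$ on $K[T]$ and satisfies the ultrametric inequality, so
\[ \bigl| |f_n|_\F - |f_m|_\F \bigr| \leq |f_n - f_m|_\F \leq ||f_n - f_m||. \]
Since $\{f_n\}$ is Cauchy in $(\A,||\cdot||)$, the sequence $\{|f_n|_\F\} \subset [0,\infty)$ is Cauchy and hence convergent; this also shows that the limit does not depend on a choice of approximating polynomial sequence (any other sequence $g_n \to f$ in $||\cdot||$ gives the same limit). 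Boundedness is then immediate: $\widetilde{|f|_\F} = \lim |f_n|_\F \leq \lim ||f_n|| = ||f||$, with $\widetilde{|0|_\F}=0$ and $\widetilde{|1|_\F} = |1|_\F = 1$ by inspection.

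Next, I would establish the ultrametric inequality for $\widetilde{|\cdot|_\F}$ by a direct passage to the limit. Given $f,g\in \A$, the partial sums satisfy $(f+g)_n = f_n + g_n$, and Claim~\ref{claim:K[T]seminorm} yields
\[ |f_n + g_n|_\F \leq \max\{|f_n|_\F,\ |g_n|_\F\}. \]
Taking $n\to\infty$ and using continuity of $\max$ on $[0,\infty)^2$ gives $\widetilde{|f+g|_\F}\leq \max\{\widetilde{|f|_\F},\widetilde{|g|_\F}\}$, which in particular implies the triangle inequality.

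The main subtlety lies in multiplicativity, since the Cauchy product $f_n \cdot g_n$ is not literally the partial sum $(fg)_n$. The remedy is a double limit argument. On the one hand, multiplication is continuous on the Banach ring $\A$, so $f_n g_n \to fg$ in $||\cdot||$; on the other hand, $(fg)_n \to fg$ too, hence $||f_n g_n - (fg)_n|| \to 0$. Applying the bound $\bigl| |f_n g_n|_\F - |(fg)_n|_\F \bigr| \leq ||f_n g_n - (fg)_n||$, we conclude $\lim_n |f_n g_n|_\F = \lim_n |(fg)_n|_\F = \widetilde{|fg|_\F}$. But $f_n,g_n\in K[T]$, so by the multiplicativity established in Claim~\ref{claim:K[T]seminorm} we have $|f_n g_n|_\F = |f_n|_\F \cdot |g_n|_\F$, and passing to the limit gives $\widetilde{|fg|_\F} = \widetilde{|f|_\F} \cdot \widetilde{|g|_\F}$. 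The only genuinely non-routine step is this interchange of limit and product; the rest is bookkeeping. Combining these verifications shows $\widetilde{|\cdot|_\F}$ is a bounded multiplicative seminorm on $\A$, completing the proof.
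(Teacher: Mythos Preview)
Your proof is correct and follows essentially the same strategy as the paper: leverage Claim~\ref{claim:K[T]seminorm} on $K[T]$, then pass each property to the limit using density of $K[T]$ in $\A$. The well-definedness, boundedness, and ultrametric steps match the paper's almost exactly (your reverse-triangle bound $\bigl||f_n|_\F - |f_m|_\F\bigr| \leq ||f_n - f_m||$ is a cleaner packaging of the paper's telescoping argument).

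The one place where you diverge is multiplicativity. The paper appeals to an adaptation of Merten's Theorem for Cauchy products, working directly with the coefficients of $fg$. You instead exploit your earlier observation that $\widetilde{|\cdot|_\F}$ is independent of the approximating polynomial sequence: since both $f_n g_n$ and $(fg)_n$ converge to $fg$ in $||\cdot||$ (the former by continuity of multiplication in the Banach ring), they yield the same limit under $|\cdot|_\F$, and multiplicativity on $K[T]$ then transfers immediately. This is a genuinely more elegant route than the Merten-style manipulation --- it replaces a coefficient-level computation with a soft Banach-ring argument, and it makes transparent exactly which structural facts are being used.
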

\begin{proof}[Proof of Claim] The fact that $\widetilde{|0|_\F}=0$ and $\widetilde{|1|_\F}=1$ is obvious by construction. As for the other properties:
	\begin{itemize}
		\item \emph{$\widetilde{|\cdot|_\F}$ is a well-defined map.} Since $\widetilde{|\cdot|_\F}$ takes values in $[0,\infty)$, we need to show that the limit of Equation~\eqref{eq:powerseriesEXT} exists for $f\in\A$. For explicitness, let $f=\sum^\infty_{i=0}a_iT^i$. By Claim~\ref{claim:K[T]seminorm}, we know that $|\cdot|_\F$ is bounded, and also satisfies the ultrametric inequality on $K[T]$. Hence, for any natural numbers $M< N$, a telescoping series argument yields 
\begin{comment}
\footnote{How? First observe
			$$\big| |\sum^{M+1}_{i=0}a_iT^i|_\F - |\sum^M_{i=0}a_iT^i|_\F\big|\leq \big| |a_{M+1} T^{M+1}|_\F + |\sum^M_{i=0}a_iT^i|_\F - |\sum^M_{i=0}a_iT^i|_\F \big|=\big| |a_{M+1} T^{M+1}|_\F\big|$$  
			by the triangle inequality. Denote $s_N=|\sum^N_{i=0}a_iT^i|_\F$. A telescoping series argument then gives
				$$\big| s_{N} - s_{M}\big|= \big| s_N - s_{N-1} + s_{N_1} + \dots - s_{M+1} + s_{M+1} - s_{M}
				\big|\leq \big| \sum_{i=M+1}^{N} |a_{M+1} T^{M+1}|_\F\big|\leq \max_{M+1\leq i\leq N}\{|a_i|R^i\}.$$  } 
				\end{comment}	
		\begin{equation}
		\big| |\sum^N_{i=0}a_iT^i|_\F - |\sum^M_{i=0}a_iT^i|_\F\big| \leq \max_{M+1\leq i\leq N}\{|a_i|R^i\}.
		\end{equation}
		Since $f\in\A$, we know $|a_i|R^i\to 0$ by definition. Hence, $\{|\sum^n_{i=0}a_iT^i|_\F\}_{n\in\N}$ is a Cauchy sequence and thus converges to a limit in $[0,\infty)$.
		\item \emph{Bounded.}  Let $f\in\A$ where $f=\sum^\infty_{i=0}a_iT^i$. Since
		\begin{equation}
		\lim_{n\to\infty} ||\sum_{i=0}^{n}a_iT_i||=||f||,
		\end{equation}
		and since 
		\begin{equation}
		|\sum_{i=0}^{n}a_iT_i|_\F\leq ||\sum_{i=0}^{n}a_iT_i||, \qquad \text{for all $n$,}
		\end{equation}
		by Claim~\ref{claim:K[T]seminorm}, conclude that 
		$\widetilde{|\cdot|_\F}\leq ||\cdot ||$. 
		\item \emph{Ultrametric Inequality.} This also follows from $|\cdot|_\F$ satisfying the ultrametric inequality. Indeed, given $f,f'\in\A$, compute:
		\begin{align*} \widetilde{|f+f'|}_\F & = \lim_{n\to\infty} |\sum^n_{i=0}a_iT^i + \sum^n_{i=0}b_iT^i|_\F \\
		&\leq \lim_{n\to\infty} \max\{|\sum^n_{i=0}a_iT^i|_\F, |\sum^n_{i=0}b_iT^i|_\F\} \\
		&= \max\{\widetilde{|f|}_\F,\widetilde{|f'|}_\F\}
		\end{align*}
		with representations $f=\sum^\infty_{i=0}a_iT^i$ and $f'=\sum^\infty_{i=0}b_iT^i$.
		
		\item \emph{Multiplicativity.} For orientation, note $f\in\A$ converges absolutely with respect to $||\cdot||$ since 
		\begin{equation}
		||f||=\norm{\sum^{\infty}_{i=0}a_i T^i}= \max_{i}|a_i|R^i = \norm{\sum^{\infty}_{i=0}|a_i|T^i}
		\end{equation}
		and $|a_i|R^i\to 0$. Extending this observation, since $\widetilde{|\cdot|_\F}\leq ||\cdot||$ on $\A$, one easily adapts the proof of Merten's Theorem for Cauchy products (for infinite series) to prove multiplicativity of $\widetilde{|\cdot|_\F}$.

	\end{itemize}
	This completes the proof of the claim.
\end{proof}

\begin{remark} Readers familiar with Berkovich geometry may recognise parallels between our proof of Claim~\ref{claim:POWERseminorm} and the standard proof of the homeomorphism
	\[\AffBerk^1\cong\bigcup_{R>0}\M(K\{R^{-1}T\}).\]
\end{remark}

%\newpage 

\newpage 
\bibliography{thesis}

\begin{thebibliography}{FvdP81}

\bibitem[Bak12]{BakerBerkeley}
Matt Baker.
\newblock Non-{A}rchimedean {G}eometry: Lectures from a course by {M}att
  {B}aker at {U}.{C}. {B}erkeley, 2012.
\newblock Available here:
  \url{https://drive.google.com/file/d/1RiETI-14Q1laEZuq2kDmYl-2M2V-B4yE/view}.

\bibitem[BD11]{BakerDeMarco}
Matt Baker and Laura DeMarco.
\newblock Preperiodic points and unlikely intersections.
\newblock {\em Duke Math Journal}, 159(1):1--29, 2011.

\bibitem[Ben03]{Benedetto}
Robert~L. Benedetto.
\newblock Non-archimedean holomorphic maps and the {A}hlfors islands theorem.
\newblock {\em American Journal of Mathematics}, 125:581--622, 2003.

\bibitem[Ben19]{BenedettoBook}
Robert~L. Benedetto.
\newblock {\em Dynamics in One Non-{A}rchimedean Variable}.
\newblock Graduate Studies in Mathematics. American Mathematical Society, 2019.

\bibitem[Ber90]{BerkovichMonograph}
Vladimir Berkovich.
\newblock {\em Spectral Theory and Analytic Geometry over Non-Archimedean
  Fields}.
\newblock American Mathematical Society, 1990.

\bibitem[Ber99]{BerkovichPolystable}
Vladimir Berkovich.
\newblock Smooth p-adic analytic spaces are locally contractible.
\newblock {\em Inventiones Mathematicae}, 137:1--84, 1999.

\bibitem[BR10]{BakerRumeley}
Matt Baker and Robert Rumely.
\newblock {\em Potential Theory and Dynamics on the Berkovich Projective Line}.
\newblock American Mathematical Society, 2010.

\bibitem[BY14]{BenYaacovACMVF}
Ita\"{i} Ben-Yaacov.
\newblock Model-theoretic properties of metric valued fields.
\newblock {\em Journal of Symbolic Logic}, 79(3):655--675, September 2014.

\bibitem[Cor72]{Cornish}
William~H. Cornish.
\newblock Normal lattices.
\newblock {\em Journal of the Australian Mathematical Society}, 14(2):200--215,
  1972.

\bibitem[Dud12]{DudzikOverconvergent}
Andrew Dudzik.
\newblock Overconvergent structures.
\newblock Unpublished Note, 2012.

\bibitem[EHS22]{ResDom}
Clifton Ealy, Deirdre Haskell, and Pierre Simon.
\newblock Residue field domination in some henselian valued fields.
\newblock arXiv:2212.05188, 2022.

\bibitem[FvdP81]{FresnelvanderPut}
Jean Fresnel and Marius van~der Put.
\newblock {\em G{\'e}om{\'e}trie Analytique Rigid et Applications}, volume~18
  of {\em Progress in Mathematics}.
\newblock Birkh{\"a}user, 1981.

\bibitem[Har77]{Hartshorne}
Robin Hartshorne.
\newblock {\em Algebraic Geometry}.
\newblock Graduate Texts in Mathematics. Springer, 1977.

\bibitem[HHM08]{HruStableDomination}
Deirdre Haskell, Ehud Hrushovski, and Duglad Mac{P}herson.
\newblock {\em Stable Domination and Independence in Algebraically Closed
  Valued Fields}.
\newblock Number~30 in Lecture Notes in Logic. Cambridge University Press,
  2008.

\bibitem[HL16]{HruLoe}
Ehud Hrushovski and Fran{\c c}ois Loeser.
\newblock {\em Non-archimedean Tame Topology and Stably Dominated Types}.
\newblock Princeton University Press, 2016.

\bibitem[Hru19]{HrushovskiDefinable}
Ehud Hrushovski.
\newblock Definability patterns and their symmetries.
\newblock arXiv:1911.01129, 2019.

\bibitem[Hru21]{HrushovskiLascar}
Ehud Hrushovski.
\newblock Beyond the {L}ascar group.
\newblock arXiv:2011.12009, 2021.

\bibitem[Joh77]{JohnstoneSpectra}
P.~T. Johnstone.
\newblock Rings, fields, and spectra.
\newblock {\em Journal of Algebra}, 49:238--260, 1977.

\bibitem[Joh82]{StoneSp}
P.T. Johnstone.
\newblock {\em Stone Spaces}.
\newblock Cambridge University Press, 1982.

\bibitem[Joh02a]{J1}
P.T. Johnstone.
\newblock {\em Sketches of an Elephant: A Topos Theory Compendium}, volume~1.
\newblock Clarendon Press, 2002.

\bibitem[Joh02b]{J2}
P.T. Johnstone.
\newblock {\em Sketches of an Elephant: A Topos Theory Compendium}, volume~2.
\newblock Clarendon Press, 2002.

\bibitem[Jona]{JonssonWeierstrass}
Mattias Jonsson.
\newblock 1.9 {W}eierstrass {T}heorems.
\newblock Unpublished book in progress on {B}erkovich Spaces.

\bibitem[Jonb]{JonssonAnnotations}
Mattias Jonsson.
\newblock Annotations to {B}erkovich's book.
\newblock
  \url{http://www.math.lsa.umich.edu/~mattiasj/715/refs/annotations.pdf}.

\bibitem[Jon15]{JonssonDynamicsLowDim}
Mattias Jonsson.
\newblock {\em {B}erkovich Spaces and Applications}, volume 2119 of {\em
  Lecture Notes in Mathematics}, chapter Dynamics on Berkovich Spaces in Low
  Dimensions, pages 205--366.
\newblock Springer, 2015.

\bibitem[JT84]{JoyalTierney}
Andr\'{e} Joyal and Myles Tierney.
\newblock {\em An Extension of the Galois Theory of Grothendieck}.
\newblock Memoirs of the AMS, 1984.

\bibitem[KY21]{JinhePabloProdefinable}
Pablo~Cubides Kovacsics and Jinhe Ye.
\newblock Pro-definability of spaces of definable types.
\newblock {\em Proc. Amer. Math. Soc. Ser. B}, 8:173--188, 2021.

\bibitem[Mor65]{Morley}
Michael Morley.
\newblock Categoricity in power.
\newblock {\em Transactions of the AMS}, 114(2):514--538, 1965.

\bibitem[Ng23]{NgThesis}
Ming Ng.
\newblock {\em Adelic Geometry via Topos Theory}.
\newblock PhD thesis, University of Birmingham, 2023.

\bibitem[NV22]{NV}
Ming Ng and Steven Vickers.
\newblock Point-free construction of real exponentiation.
\newblock {\em Logical Methods in Computer Science}, 18(3):1--32, 2022.

\bibitem[NV23]{NVOstrowski}
Ming Ng and Steven Vickers.
\newblock A point-free look at {O}strowski's theorem and absolute values.
\newblock arXiv:2308.14758, 2023.

\bibitem[Pay15]{Payne}
Sam Payne.
\newblock Topology of non-{A}rchimedean analytic spaces and relations to
  complex algebraic geometry.
\newblock {\em Bull. Amer. Math. Soc; Bull. Amer. Math. Soc.}, 52(2):223--247,
  2015.

\bibitem[Sil07]{SilvermanDynamical}
Joseph~H. Silverman.
\newblock {\em The Arithmetic of Dynamical Systems}, volume 241 of {\em
  Graduate Texts in Mathematics}.
\newblock Springer, New York, 2007.

\bibitem[vdPS95]{vdPutSchneider}
Marius van~der Put and Peter Schneider.
\newblock Points and topologies in rigid geometry.
\newblock {\em Mathematische Annalen}, 302(1):81--104, 1995.

\bibitem[Vic05]{ViLoccompI}
Steven Vickers.
\newblock Localic completion of generalized metric spaces {I}.
\newblock {\em Theory and Applications of Categories}, 14:328--356, 2005.

\bibitem[Vic07]{Vi4}
Steven Vickers.
\newblock Locales and toposes as spaces.
\newblock In M~Aiello, I~E Pratt-Hartmann, and J~F van Benthem, editors, {\em
  Handbook of Spatial Logics}, pages 429--496. Springer, 2007.

\bibitem[Vic09]{ViLocCompII}
Steven Vickers.
\newblock Localic completion of generalized metric spaces ii: Powerlocales.
\newblock {\em Journal of Logic and Analysis}, 1(11):1--48, 2009.

\bibitem[Vic14]{Vi3}
Steven Vickers.
\newblock Continuity and geometric logic.
\newblock {\em Journal of Applied Logic}, 12(1):14--27, 2014.

\bibitem[Vic22]{VickersPtfreePtwise}
Steven Vickers.
\newblock Generalized point-free spaces, pointwise.
\newblock arXiv:2206.01113, 2022.

\end{thebibliography}

\end{document}